\newtheorem{thm}{Theorem}[section]
\newtheorem{cor}[thm]{Corollary}
\newtheorem{lem}[thm]{Lemma}
\newtheorem{prop}[thm]{Proposition}
\theoremstyle{definition}
\newtheorem{defn}[thm]{Definition}
\newtheorem*{claim*}{Claim}
\theoremstyle{remark}
\newtheorem{rem}[thm]{Remark}
\newtheorem{example}[thm]{Example}
\numberwithin{equation}{section}
\newcommand{\norm}[1]{\left\Vert#1\right\Vert}
\newcommand{\bignorm}[1]{\bigl\Vert#1\bigr\Vert}
\newcommand{\abs}[1]{\left\vert#1\right\vert}
\newcommand{\BL}{\operatorname{\mathbf{BL}}}
\newcommand{\BLhat}{\operatorname{\widehat{\mathbf{BL}}}}
\newcommand{\N}{\mathbb{N}}
\newcommand{\R}{\mathbb{R}}
\newcommand{\C}{\mathbb{C}}
\newcommand{\Q}{\mathbb{Q}}
\newcommand{\T}{\mathbb{T}}
\newcommand{\Z}{\mathbb{Z}}
\renewcommand{\O}{\mathbb{O}}
\newcommand{\calO}{\mathcal{O}}
\newcommand{\calF}{\mathcal{F}}
\newcommand{\calE}{\mathcal{E}}
\newcommand{\bsG}{\boldsymbol{G}}
\newcommand{\bsGh}{\boldsymbol{\hat{G}}}
\newcommand{\bsp}{\boldsymbol{p}}
\newcommand{\bspp}{\boldsymbol{p'}}
\newcommand{\bss}{\boldsymbol{\sigma}}
\newcommand{\bsds}{\boldsymbol{\dot{\sigma}}}
\newcommand{\bssf}{\boldsymbol{\sigma^4}}
\newcommand{\bsso}{\boldsymbol{\sigma^1}}
\newcommand{\bsstw}{\boldsymbol{\sigma^2}}
\newcommand{\bssth}{\boldsymbol{\sigma^3}}
\newcommand{\bsts}{\boldsymbol{\tilde{\sigma}}}
\newcommand{\bshs}{\boldsymbol{\hat{\sigma}}}
\newcommand{\bsth}{\boldsymbol{\theta}}
\newcommand{\bsq}{\boldsymbol{q}}
\newcommand{\bsr}{\boldsymbol{\rho}}
\newcommand{\bsA}{\boldsymbol{A}}
\newcommand\g{{\operatorname{g}}}
\newcommand\BLg{{\operatorname{BL_\g}}}
\newcommand\s{{\operatorname{s}}}
\newcommand\BLs{{\operatorname{BL_\s}}}
\newcommand{\one}{^{(1)}}
\newcommand{\two}{^{(2)}}
\newcommand{\Alpha}{\mathrm{A}}
\newcommand{\Beta}{\mathrm{B}}
\newcommand{\Fourier}{\mathcal{F}}
\newcommand{\lnorm}{\left\Vert}
\newcommand{\rnorm}{\right\Vert}
\newcommand{\lip}{\left\langle}
\newcommand{\rip}{\right\rangle}
\newcommand{\labs}{\left\vert}
\newcommand{\rabs}{\right\vert}
\newcommand{\bigglabs}{\biggl\vert}
\newcommand{\biggrabs}{\biggr\vert}
\newcommand{\lpar}{\left(}
\newcommand{\rpar}{\right)}
\newcommand{\wrt}[1]{\ignorespaces\,\mathrm{d}#1}
\newcommand{\fn}{\ignorespaces\,}
\newcommand{\afterbar}{\bar{\phantom{x}}}
\newcommand{\supp}{\operatorname{supp}}
\newcommand{\Hom}{\operatorname{Hom}}
\newcommand{\Ann}{\operatorname{Ann}}
\newcommand{\ds}{\displaystyle}
\begin{document}

\title[Brascamp--Lieb inequalities on LCA groups]%
{Brascamp--Lieb inequalities on locally compact abelian groups}%
\author{Jonathan Bennett}
\address{School of Mathematics, University of Birmingham, Edgbaston, Birmingham, B15 2TT, United Kingdom}
\email{j.bennett@bham.ac.uk}
\author{Michael G. Cowling}
\address{School of Mathematics, University of New South Wales, UNSW Sydney NSW 2052, Australia}
\email{m.cowling@unsw.edu.au}

\keywords{Brascamp--Lieb inequalities, locally compact abelian groups}
\subjclass{44A12, 22D05}

\begin{abstract}
We establish a structure theorem for the Brascamp--Lieb constant formulated in the general setting of locally compact abelian groups. 
This extends and unifies the finiteness characterisations previously known for euclidean spaces and for finitely generated groups and their duals. We place particular emphasis on Fourier invariance throughout, reflecting
the fundamental Fourier invariance of Brascamp--Lieb multilinear forms in this context. 
\end{abstract}
\thanks{The first author was supported by the Engineering and Physical Sciences Research Council through grant EP/W032880/1, and the second by the Australian Research Council through grant DP220100285.
Both thank Eunhee Jeong for numerous discussions on the subject of this paper.
In particular the second author thanks her for explaining her paper [8] with the first author
in detail at a MATRIX meeting in March 2022, and thanks MATRIX for helping to fund her visit to this meeting. }
\maketitle


\section{Introduction and main results}\label{sec:intro}
The Brascamp--Lieb inequalities (also known as H\"older--Brascamp--Lieb inequalities), introduced in \cite{BL76}, are of considerable interest across mathematics; see, for example, \cite{BB21, Zh22} for an illustration of their influence, particularly in Fourier analysis. They concern Lebesgue space bounds for rather broad classes of multilinear forms that arise frequently in a range of settings, from harmonic analysis and dispersive PDE, to additive combinatorics, convex geometry and theoretical computer science.
Originally they were phrased in the context of euclidean spaces, but recently versions have been considered in the context of discrete abelian groups and of finite-dimensional compact abelian groups, as well as in the context of compact homogeneous spaces and the Heisenberg group.
The main aim of this paper is  to describe the Brascamp--Lieb inequalities on locally compact abelian groups, illustrating the fundamental role that Fourier duality plays in these.
We shall see more about the history of these inequalities after we state what they are and outline our main results.

Suppose that $G$ is a locally compact (Hausdorff topological) group, with identity $e$, usually written multiplicatively; integrals over $G$ are relative to its left-translation-invariant Haar measure.
We \emph{always} suppose that homomorphisms of locally compact groups are continuous.

In this paper we focus on the case where $G$ is an LCA group, that is, a locally compact \emph{abelian} group.
This case is simpler than the general case for two reasons: first, all subgroups are normal and so all quotients $G/H$ are groups rather than homogeneous spaces, and second, the Haar measure is also invariant under right translations.
In a future paper we propose to deal with the general case.
However, some of our results are stated for the general case, as the proofs do not rely on commutativity; we always use expressions such as ``not necessarily abelian'' to clarify that our groups may be nonabelian.

Every LCA group $G$ has a dual group $\hat G$, and given a suitable function $f$ on $G$, its Fourier transform $\hat f$ is defined on $\hat{G}$.
The Haar measure on the LCA group $\hat{G}$ may be normalised such that the Plancherel and inversion formulas hold, for ``suitable'' functions on $G$.
We write $S(G)$ for a suitable space of test functions on $G$.
We make these notions precise later.

\begin{defn}  \label{def:BL-datum}
A Brascamp--Lieb (homomorphism) datum is a triple $(G, \bss, \bsp)$, where
$\bss$ is a $J$-tuple of homomorphisms $\sigma_j : G \to G_j$ of LCA groups,
and $\bsp$ is a $J$-tuple of indices $(p_1, \dots, p_J)$, each in $[1,+\infty]$.
\end{defn}

The index $J$ is a positive integer, which is not particularly important; we omit it in definitions as well as in sums and products.
When $1 \leq p \leq \infty$, the dual index is written $p'$.

\begin{defn}\label{def:BL-ineq-homo}
A Brascamp--Lieb inequality is an inequality of the form
\begin{equation}\label{usual}
\labs \int_{G} \prod_j (f_j \circ \sigma_j)(x) \wrt{x} \rabs
\leq C \prod_{j} \lnorm f_j \rnorm_{L^{p_j}(G_j)}
\qquad\forall f_j \in S(G_j) ,
\end{equation}
and a (Fourier) dual Brascamp--Lieb inequality is an inequality of the form
\begin{equation}\label{unusual}
\labs \int_{G} \prod_j (f_j \circ \sigma_j)(x) \wrt{x} \rabs
\leq C \prod_{j} \lnorm \hat f_j \rnorm_{L^{p'_j}(G_j)}
\qquad\forall f_j \in S(G_j).
\end{equation}
We assume that the tensor product function in the integral on the left hand sides of these inequalities is integrable.
The smallest possible values of $C$ in \eqref{usual} and \eqref{unusual}, that we denote by $\BL(G, \bss, \bsp)$ and  $\BLhat(G, \bss, \bspp)$, are called the \textit{Brascamp--Lieb constant} and \textit{dual Brascamp--Lieb constant}, repectively. These constants are taken to be $\infty$ if no such inequality holds. 
\end{defn}
It is clear that the Brascamp--Lieb constants $\BL(G, \bss, \bsp)$ and  $\BLhat(G, \bss, \bspp)$ depend on the normalisations of the Haar measures on $G$ and the $G_j$ that implicitly appear in \eqref{usual} and \eqref{unusual}.  
However, finiteness of the constants does not, since changing the Haar measure by a factor changes integrals by the same factor and $L^p$-norms by a power of the factor.
In the classical case of vector groups, where it is customary to take vector spaces with an inner product, this point does not arise as the measures are naturally the Lebesgue measures determined by the associated inner products.
Similarly, it is usually assumed that compact groups have total Haar measure $1$ and points in discrete groups have Haar measure $1$.
However, finite groups are both compact and discrete making these assumptions contradictory, so care is needed.
We clarify this issue later.

We point out that our definition of the Brascamp--Lieb constant $\BL(G, \bss, \bsp)$ differs slightly from that in several previous works, beginning with \cite{BCCT08}. This difference, which is entirely superficial, stems from our emphasis on multilinearity and Fourier invariance, which necessitates the inclusion of signed functions $f_j$; see \cite{BJ} for further clarification. The systematic study of the dual constant $\BLhat(G, \bss, \bspp)$ is more recent, originating in \cite{BBBCF} in the euclidean setting (see also \cite{BJ}), although its motivation is evident much earlier in the literature, notably in \cite{Ball89}.

Of course the fundamental questions here concern the \textit{finiteness} of these two constants for a given datum $(G, \bss, \bsp)$, and \textit{how they are related to each other}. The main purpose of this paper is to provide answers to these questions; see the forthcoming Theorem \ref{thm:intro-thm-2} and its corollary. As we shall see, in our Fourier-invariant setting of LCA groups, the inequalities \eqref{usual} and \eqref{unusual} are naturally considered in tandem.

Answers to these questions, particularly those concerning the finiteness of $\BL(G, \bss, \bsp)$, are well known in some important examples, where they may be formulated as ``rank conditions'' on compactly generated subgroups $H\leq G$ and their images under the homomorphisms $\sigma_j$. We recall that every compactly generated LCA group $H$ is of the form $\R^a \times \Z^c \times K$, where $a, c \in \N$ and $K$ is a compact group.
We write $\gamma(H)$ for the integer $a+c$; this is the smallest power $\gamma$ for which a growth estimate $|U^n| \lesssim_U n^\gamma$ holds for all compact subsets $U$ of $G$.
We call the \emph{rank condition for $(G,\bss,\bsp)$} the requirement that
\begin{equation}\label{eq:rank-condition-1}
  \gamma(H) \leq \sum_{j} \gamma(\sigma_j(H)) /  p_j
\end{equation}
for all compactly generated closed subgroups $H$ of $G$.

For \emph{vector groups} $G$, the first general finiteness results were found by Barthe \cite{Barthe} and Carlen, Lieb and Loss \cite{CLL04}, who treated (independently) the case where each $G_j$ is a copy of $\R$.
Then Carbery, Christ,  Tao and the first author \cite{BCCT08} found necessary and sufficient conditions for the finiteness of $\BL(G,\bss,\bsp)$ in general, which we call the BCCT conditions: namely, first that \eqref{eq:rank-condition-1} holds, and second that the homogeneity condition $\gamma(G) = \sum_{j} \gamma(\sigma_j(G)) /  p_j$ holds.
For these vector groups it is also known how to compute the constants when they are finite, at least in principle, thanks to a fundamental result of Lieb \cite{Lieb}. Lieb's theorem states that the Brascamp--Lieb constant is equal to the \textit{gaussian Brascamp--Lieb constant}
$$
\BLg(G, \bss, \bsp):=\ds\sup_{\bsA}\frac{ \prod_j \det(A_j^* A_j)^{1/2p_j} }
{ \det\lpar \sum_j \sigma_j^* A^*_j A_j \sigma_j /p_j \rpar^{1/2}  },
$$
obtained by testing \eqref{usual} on centred gaussian inputs $f_j$. Here $\bsA := (A_1, \dots, A_J)$ runs over $\prod_j \mathrm{GL}(\dim(G_j), \R)$.

For finitely generated \emph{discrete groups} $G$ it was shown by Carbery, Christ, Tao and the first author in \cite{BCCT10} that the finiteness of $\BL(G,\bss,\bsp)$ is equivalent to the rank condition \eqref{eq:rank-condition-1}, and a discrete analogue of Lieb's theorem was later obtained by Christ \cite{C13}, allowing the constant to be computed by testing on indicator functions of finite subgroups of $G$. As a result, the Brascamp--Lieb constant is equal to the \textit{subgroup Brascamp--Lieb constant}
$$
\BLs(G, \bss, \bsp):=\ds\sup_{H \in \calF(G)} \frac{\norm{1_H}_1} {\prod_{j} \norm{1_{\sigma_j(H)}}_{p_j}},
$$
where $\calF(G)$ is the collection of all finite subgroups of $G$.
Jeong and the first author reached similar conclusions in \cite{BJ} for the duals of such discrete groups.
We refer to Section \ref{sec:rank-cond} for a more detailed description of these prior results.

Returning to the general situation, as in the vector group case \cite{BCCT08} it will be convenient to exclude some degenerate situations where both the Brascamp--Lieb and dual Brascamp--Lieb constants are easily seen to be infinite. 
As we shall see, both constants are infinite unless the vector homomorphism $\bss$ is proper, that is, $\bss^{-1}(K)$ is compact for all compact sets $K$, and in particular unless $N := \cap_j \ker(\sigma_j)$ is compact and all the $\sigma_j$ are open (see Lemma \ref{lem:bss-must-be-proper}). 
When $N$ is compact, the homomorphisms $\sigma_j$ induce homomorphisms $\dot\sigma_j: G/N \to G_j$, and when the Haar measure on $G/N$ is appropriately normalised,
\[
\BL(G, \bss, \bsp) = \BL(G/N, \dot{\bss}, \bsp)
\quad\text{and}\quad
\BLhat(G, \bss, \bspp) = \BLhat(G/N, \dot{\bss}, \bspp).
\]
Thus we may also assume also that the common kernel $N$ is trivial.
Likewise, if $\tilde\sigma_j$ denotes $\sigma_j$ with codomain $\sigma_j(G)$ rather than $G_j$, and the Haar measure on the open subgroup $\sigma_j(G)$ of $G_j$ is appropriately normalised, then
\[
\BL(G, \bsts, \bsp) = \BL(G, {\bss}, \bsp)
\quad\text{and}\quad
\BLhat(G, \bsts, \bspp) = \BLhat(G, {\bss}, \bspp).
\]
Thus there is no loss of generality in also assuming that each $\sigma_j$ is surjective and open, and hence is an isomorphism of $G_j$ and $G/\ker(\sigma_j)$.
We shall refer to a  Brascamp--Lieb datum as \emph{nondegenerate} when these conditions all hold. For the purposes of studying Brascamp--Lieb data associated with finite constants, there is no loss of generality in assuming nondegeneracy, and we shall do this in the rest of this introduction.

For the purposes of \textit{relating} the Brascamp--Lieb constants $\BL(G, \bss, \bsp)$ and  $\BLhat(G, \bss, \bspp)$ it will be natural to work with what we shall refer to as \textit{canonical Brascamp--Lieb data}. 
\begin{defn}\label{def:canBL-datum}
A canonical Brascamp--Lieb datum is a Brascamp--Lieb datum $(G, \bss, \bsp)$ such that $G$ is a subgroup of the product group $\bsG : = G_1\times\cdots\times G_J$ and $\sigma_j$ is the canonical projection of $G$ onto $G_j$ for each $j$.
\end{defn}
The first thing to point out is that every Brascamp--Lieb datum is (isomorphism) equivalent to a canonical Brascamp--Lieb datum: the vector homomorphism $\bss$ identifies $G$ with a subgroup $\bss(G)$ of $\bsG$, upon which the $\sigma_j$ become the projections onto the factors $G_j$. 
We clarify that Brascamp--Lieb data $(G, \bss, \bsp)$ and $(\tilde{G}, \tilde{\bss}, \tilde{\bsp})$ are equivalent here if $\bsp=\tilde{\bsp}$ and if there are isomorphisms $\phi:G\rightarrow\tilde{G}$ and $\phi_j:G_j\rightarrow\tilde{G}_j$ such that $\tilde{\sigma}_j=\phi_j\circ\sigma_j\circ\phi^{-1}$ for each $j$. If the Haar measures involved are chosen suitably (specifically, the Haar measures on $\tilde{G}$ and $\tilde{G}_j$ are chosen to be  the pushforwards of the Haar measures on $G$ and $G_j$ respectively), it follows (largely tautologically) that 
\begin{equation}\label{cheat}
\BL(G, \bss, \bsp)=\BL(\tilde{G}, \tilde{\bss}, \tilde{\bsp}).
\end{equation} 
We caution that the natural (Lebesgue) measures in the vector group case only obey these relations if the isomorphisms $\phi$ and $\phi_j$ are isometries, meaning that a constant factor involving determinants appears in \eqref{cheat} in general; see \cite{BCCT08}.

Evidently, if $(G, \bss, \bsp)$ is canonical, then
\[
\int_{G} \prod_j (f_j \circ \sigma_j)(x) \wrt{x} = \int_{G} (f_1 \otimes \dots \otimes f_J)(x) \wrt{x} ,
\]
which interacts naturally with the Fourier transform via the Poisson summation formula
\begin{equation}\label{posum}
\int_{G} (f_1 \otimes \dots \otimes f_J)(x) \wrt x
= \int_{G^{\perp}} (\hat f_1 \otimes \dots \otimes \hat f_J)(y) \wrt y 
\end{equation}
for LCA groups; here 
$G^\perp$ is the annihilator of $G$ in the dual product group $\bsGh = \hat{G}_1 \times \dots \times \hat{G}_J$.
For canonical data it follows quickly from the definitions that 
\begin{equation}\label{switch}
\BLhat(G, \bss,\bspp) = \BL(G^\perp, \bshs,\bspp),
\end{equation}
where $\hat\sigma_j$ denotes the canonical projection from $G^\perp$ to $\hat{G}_j$. This identifies $(G^\perp, \bshs,\bspp)$ as a natural dual form of the datum $(G, \bss,\bsp)$. Correspondingly we define the \textit{dual rank condition} for $(G, \bss,\bsp)$ to be the rank condition  \eqref{eq:rank-condition-1} for $(G^\perp, \bshs, \bspp)$. As we shall see, the only seemingly essential role of canonical data in what follows is to allow us access to the Poisson summation formula \eqref{posum}. Indeed we shall work with general homomorphism data whenever \eqref{posum} plays no role.

The relationship between the Brascamp--Lieb constants $\BL(G, \bss, \bsp)$ and $\BLhat(G, \bss, \bspp)$ (or, equivalently, $\BL(G^\perp, \bshs,\bspp)$) is known whenever one has a Lieb-type theorem for both $(G, \bss, \bsp)$ and its dual --- so far, for  vector groups, and finitely generated groups and their duals.
For vector groups it was shown in \cite{BBBCF} that
\begin{equation}\label{fid}
\BL(G, \bss, \bsp) = \prod_{j} \lpar \frac{p_j^{1/p_j}}{(p_j')^{1/p_j'}}\rpar^{\dim(G_j)} \BLhat(G, \bss, \bspp).
\end{equation}
In particular it follows that a vector Brascamp--Lieb datum $(G, \bss,\bsp)$ has finite constant if and only if its dual does, meaning that the BCCT conditions are self-dual.
Similarly, for finitely generated $G$ it was shown is \cite{BJ} that
\begin{equation}\label{fidd}
\BL(G, \bss, \bsp) =\BLhat(G, \bss, \bspp).
\end{equation}

Before stating our main theorem, we recall that every LCA group $G$ has a unique maximal closed connected subgroup $G_{c}$ and a unique maximal subgroup $G_{b}$ of bounded elements, that is, elements $x$ such that $\{x^n :n \in \Z\}$ is relatively compact in $G$.
With these definitions, 
\[
\{e\} \subseteq G_{b} \cap G_{c} \subseteq G_{c} \subseteq G_{c}G_{b} \subseteq G.
\]
Further, $G^{1} := G_{b} \cap G_{c}$ is compact and has a discrete torsion-free dual; $G^{2} := G_{c}/(G_{b} \cap G_{c})$ is a self-dual vector group (isomorphic to $\R^a$ for some integer $a$); $G^{3} := (G_{b} G_{c}) /G_{c}$ is a totally disconnected group all of whose elements are bounded, and the dual of $G^{3}$ is of the same type; and $G^{4} := G/(G_{b} G_{c})$ is a discrete torsion-free group with a compact dual group.
If $G$ is an \emph{elementary LCA group}, that is, $G$ is compactly generated and has a compactly generated dual, then $G$ is of the form $\R^a \times \T^b \times \Z^c \times F$, where $a, b, c \in \N$ and $F$ is a finite group. 
In this case, $G^1 \simeq \T^b$, $G^2 \simeq \R^a$, $G^3 \simeq F$ and $G^4 \simeq \Z^c$. 

If $\sigma_j: G \to G_j$ is an open projection of LCA groups, then by a combination of restriction and factoring, $\sigma_j$ induces open projections $\sigma^{1}_j: G^{1} \to G^{1}_j$, $\sigma^{2}_j: G^{2} \to G^{2}_j$, $\sigma^{3}_j: G^{3} \to G^{3}_j$, and $\sigma^{4}_j: G^{4} \to G^{4}_j$.

Our main theorem states that the Brascamp--Lieb and dual Brascamp--Lieb constants ``factor through'' this structure, and tells us how to compute each of the resulting factors.
\begin{thm}\label{thm:intro-thm-2}
Suppose that $(G,\bss,\bsp)$ is a nondegenerate canonical Brascamp--Lieb datum.
Then when the Haar measures of $G^1$, $G^2$, $G^3$ and $G^4$ are suitably normalised, 
\[
\BL(G, \bss, \bsp)
= \BL(G^1, \bss^1, \bsp)  \BL(G^2, \bss^2, \bsp) \BL(G^3, \bss^3, \bsp) \BL(G^4, \bss^4, \bsp) .
\]
Further, 
\[\BL(G^1, \bss^1, \bsp) = 1\]
if the dual rank condition holds for $(G^1, \bss^1,\bsp)$, and is infinite otherwise;
\[
\BL(G^2, \bss^2, \bsp) = \ds\sup_{\bsA}\frac{ \prod_j \det(A_j^* A_j)^{1/2p_j} }
{ \det\lpar \sum_j (\sigma^{2}_j)^* A^*_j A_j \sigma^2_j /p_j \rpar^{1/2}  } ,
\]
where $\bsA := (A_1, \dots, A_J)$ runs over $\prod_j \mathrm{GL}(\dim(G^2_j), \R)$. 
This constant is finite if and only if the BCCT conditions hold; 
\[
\BL(G^3, \bss^3, \bsp) =  \ds\sup_{H \in \calO(G)} \frac{\norm{1_H}_1} {\prod_{j} \norm{1_{\sigma_j(H)}}_{p_j}}
, \]
where $\calO(G)$ is the collection of all compact open subgroups of $G$; 
\[
\BL(G^4, \bss^4, \bsp) = 1
\]
if the rank condition holds for $(G^4, \bss^4,\bsp)$, and is infinite otherwise.

Analogous results holds for $\BLhat(G, \bss, \bspp)$.
\end{thm}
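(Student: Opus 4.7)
\medskip

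\noindent\textbf{Proof proposal.} The plan is to prove the multiplicative factorisation first, by slicing along the canonical subgroup tower $\{e\} \leq G^1 \leq G_c \leq G_c G_b \leq G$, and then to verify each of the four factor formulas by reducing to a case that is either already known in the literature or follows from one by Fourier duality.

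For the factorisation, I would work one layer at a time using Weil's integration formula. Concretely, if $N$ is a closed subgroup of $G$ that is stable under the decomposition (e.g.\ $N = G^1$, or later $N = G_c$, etc.), and if the $\sigma_j$ induce open projections onto $\sigma_j(N) \leq G_j$ with $G/N \to G_j/\sigma_j(N)$ also open, then with compatibly normalised Haar measures
\[
\int_{G} \prod_j (f_j\circ\sigma_j)(x) \wrt x
= \int_{G/N} \int_N \prod_j (f_j\circ\sigma_j)(\dot x n) \wrt n \wrt{\dot x},
\]
and writing the inner integral as a datum over $N$ with factors $f_j$ restricted to $\sigma_j(N)$ (after translation), one can apply the BL inequality for the $N$-datum to obtain $\prod_j \|f_j(\dot{x}_j\,\cdot\,)\|_{L^{p_j}(\sigma_j(N))}$ times the BL constant of that sub-datum, and then apply the BL inequality on $G/N$ to the resulting functions. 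Iterating along the tower produces the product bound. The reverse inequality, which gives equality, will follow by taking products of near-extremisers at each layer as test functions on $G$. For this, the nondegeneracy (proper $\bss$, trivial common kernel, surjective open $\sigma_j$) is crucial as it is what guarantees that the induced datum at each layer remains canonical and nondegenerate on the corresponding factor.

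Given the factorisation, each factor is handled by quoting a known result:
\begin{itemize}
\item For $G^2$, which is a vector group, this is exactly Lieb's theorem together with the BCCT characterisation (Section~\ref{sec:rank-cond}), giving the gaussian formula and the BCCT finiteness conditions.
\item For $G^4$, which is a finitely generated (hence discrete) torsion-free LCA group, this is the BCCT/Christ subgroup formula; since $G^4$ is torsion-free, its only finite subgroup is $\{e\}$, and $\BLs = 1$ under the rank condition (and infinite otherwise).
\item For $G^3$, totally disconnected with only bounded elements, every neighbourhood of the identity contains a compact open subgroup, so the natural analogue of Christ's argument (testing on indicators of compact open subgroups, which play the role of finite subgroups in the discrete setting) gives the stated formula. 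This step requires a separate approximation argument, using that characteristic functions of compact open subgroups are dense in an appropriate sense and that the Haar measure on $G^3$ is the limit over compact open subgroups.
\item For $G^1$, compact connected, I would argue by Fourier duality: $\widehat{G^1}$ is discrete torsion-free, so by \eqref{switch} and the $G^4$ case applied to $((G^1)^\perp, \widehat{\bss^1}, \bspp)$, the dual constant $\BLhat(G^1,\bss^1,\bspp)$ equals $1$ under the dual rank condition and is infinite otherwise; on the compact group $G^1$ with total mass $1$, the normalisations are chosen so that $\BL(G^1,\bss^1,\bsp) = \BLhat(G^1,\bss^1,\bspp)$, yielding the claimed statement.
\end{itemize}

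The main obstacle I expect is the factorisation step. Verifying that the induced projections $\sigma_j^i$ really are open and surjective in each layer (which is essential to keep data canonical and to invoke Weil), and that the normalisations of Haar measures on $G^1,\dots,G^4$ can be chosen coherently so that the product is an equality rather than just an inequality, will require a careful structural argument about how $G_b$, $G_c$ and their interactions behave under open homomorphisms. A secondary obstacle is the totally disconnected $G^3$ layer, where replacing ``finite subgroups'' of a discrete group by ``compact open subgroups'' in the Christ-style argument needs a genuine approximation/limit argument rather than a direct transcription. The analogous statement for $\BLhat$ then follows formally by applying the whole theorem to the dual canonical datum $(G^\perp,\bshs,\bspp)$ and using \eqref{switch} together with the observation that the decomposition $(G^\perp)^i$ is dual to $G^i$ in an appropriate sense.
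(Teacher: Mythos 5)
Your outline of the upper bound (iterating Weil's formula down the tower, as in Theorem \ref{thm:BL-G-subgps-quotients}) and your treatment of the individual factors ($G^2$ via Lieb/BCCT, $G^3$ via a compact-open-subgroup analogue of Christ's argument plus approximation, $G^1$ by Fourier duality from the discrete torsion-free case) are all consistent with the paper. But there is a genuine gap in how you propose to get the \emph{equality} in the factorisation. You say the reverse inequality ``will follow by taking products of near-extremisers at each layer as test functions on $G$.'' This only makes sense when $G$ is an actual direct product of the layers, and it is not: for instance the extension $0 \to G_bG_c \to G \to G^4 \to 0$ need not split (take $G^4 \simeq \Q$, which is not free abelian), so a product of near-extremisers on $G^4$ and on $G_bG_c$ does not define an input on $G$. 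What one does get cheaply from Theorems \ref{thm:BL-open-sbgp} and \ref{thm:BL-cpct-qtnt} is $\BL(G^i,\bss^i,\bsp) \le \BL(G,\bss,\bsp)$ for each layer separately, i.e.\ a bound by the maximum of the factors, not by their product.

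The missing idea is Lemma \ref{lem:Gamma}: if $\BL(G,\bss,\bsp)$ is finite, then the rank condition for $G$ (proved necessary in Section \ref{sec:rank-cond}) pushes down to $G^4$ --- every compactly generated subgroup of $G^4$ lifts to a discrete subgroup of $G$ with the same growth indices --- and dually up to $G^1$, forcing $\BL(G^1,\bss^1,\bsp) = \BL(G^4,\bss^4,\bsp) = 1$. The ``Moreover'' clauses of Theorem \ref{thm:BL-G-subgps-quotients} then upgrade the two corresponding inequalities to equalities, reducing everything to $G_bG_c/(G_b\cap G_c)$, which genuinely \emph{is} a direct product of a vector group and a totally disconnected group preserved by the homomorphisms; only there does your product-of-near-extremisers argument (Corollary \ref{cor:if-factor}) apply. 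Without this detour through the rank condition your method proves only $\BL(G,\bss,\bsp) \le \prod_i \BL(G^i,\bss^i,\bsp)$, and it also leaves open the infinite case (showing that $\BL(G,\bss,\bsp)=\infty$ when some factor is infinite), which again goes through the failure of the rank or dual rank condition. A smaller inaccuracy: $G^4$ is discrete and torsion-free but need not be finitely generated, so quoting Christ/CDKSY there requires the same approximation step (Theorem \ref{thm:BL-approx}) that you correctly flag for $G^3$.
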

This result, coupled with some Fourier duality arguments, leads to the following corollary.
\begin{cor}\label{cor:BJ-conjecture}
Let $(G, \bss,\bsp)$ be a nondegenerate canonical Brascamp--Lieb datum.
Then
\[
\BL(G, \bss, \bsp) = \prod_j \lpar \frac{p_j^{1/p_j}}{(p_j')^{1/p_j'}}  \rpar^{\dim(G^2_j)}  \BLhat(G, \bss, \bspp).
\]
\end{cor}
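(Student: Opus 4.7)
The strategy is to apply Theorem \ref{thm:intro-thm-2} to both $\BL(G, \bss, \bsp)$ and $\BLhat(G, \bss, \bspp)$, and then match up the four resulting structural factors one piece at a time. This reduces the corollary to a componentwise identity on each of $G^1, G^2, G^3, G^4$, with only the vector-group factor contributing a nontrivial constant.

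Theorem \ref{thm:intro-thm-2} and its asserted analogue for $\BLhat$ give, with suitably normalised Haar measures,
\[
\BL(G, \bss, \bsp) = \prod_{k=1}^{4} \BL(G^k, \bss^k, \bsp) \quad\text{and}\quad \BLhat(G, \bss, \bspp) = \prod_{k=1}^{4} \BLhat(G^k, \bss^k, \bspp).
\]
For the vector factor ($k=2$) I would simply apply the euclidean Fourier duality identity \eqref{fid} to $(G^2, \bss^2, \bsp)$, obtaining
\[
\BL(G^2, \bss^2, \bsp) = \prod_j \lpar \frac{p_j^{1/p_j}}{(p_j')^{1/p_j'}} \rpar^{\dim G^2_j} \BLhat(G^2, \bss^2, \bspp).
\]
For the compact connected piece ($k=1$) and the discrete torsion-free piece ($k=4$), Theorem \ref{thm:intro-thm-2} expresses both $\BL$ and $\BLhat$ as $0$--$1$ valued constants governed by rank or dual-rank conditions, and one needs to verify $\BL(G^k, \bss^k, \bsp) = \BLhat(G^k, \bss^k, \bspp)$ for $k \in \{1,4\}$. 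This should follow from Pontryagin duality: since $\widehat{G^1}$ is discrete torsion-free of type $k=4$ (and conversely), the rank and dual-rank conditions on $(G^k, \bss^k)$ and its Fourier dual are interchanged in the expected way, so the respective finiteness transitions match.

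The main obstacle is the totally disconnected bounded piece ($k=3$), where the required identity $\BL(G^3, \bss^3, \bsp) = \BLhat(G^3, \bss^3, \bspp)$ is the structural LCA analogue of \eqref{fidd}; both sides are suprema of indicator-function ratios over compact open subgroups. The argument should exploit the Pontryagin duality between compact open subgroups $H \leq G^3$ and their annihilators $H^\perp \leq \widehat{G^3}$, the compatibility $\hat{\sigma}_j^3(H^\perp) = \sigma_j^3(H)^\perp$, and the Haar-measure identity $|H|\,|H^\perp| = 1$ (for appropriately chosen normalisations); these combine to transform the subgroup ratio for $G^3$ into the corresponding ratio for $\widehat{G^3}$. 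Multiplying the four matched factors then completes the proof.
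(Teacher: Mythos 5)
Your proposal is correct and follows essentially the same route as the paper: factor both constants through the structure theorem, reduce the vector piece to the euclidean identity \eqref{fid}, and match the remaining pieces by Fourier duality --- the componentwise identities $\BL(G^k,\bss^k,\bsp)=\BLhat(G^k,\bss^k,\bspp)$ for $k\in\{1,3,4\}$ that you set out to verify are exactly the content of Lemmas \ref{lem:discrete-torsion-free-case}, \ref{lem:compact-connected-case} and \ref{lem:tdG-with-TDGhat}. The only cosmetic difference is that the paper phrases the matching via \eqref{switch} as $\BL(G^1,\bss,\bsp)=\BL((\hat G)^4,\bss,\bspp)$ and so on for the pieces of $G^\perp$, whereas you compare $\BL$ and $\BLhat$ on the pieces of $G$ itself; these are equivalent, and your sketches of the duality arguments (including the $\abs{H}\,\abs{H^\perp}=1$ computation for the totally disconnected piece) are the right ones.
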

This generalisation of both \eqref{fid} and \eqref{fidd} to the setting of an arbitrary LCA group was conjectured in \cite{BJ}. By \eqref{switch} it is equivalent to the identity
\begin{equation}\label{du}
\BL(G, \bss, \bsp) = \prod_j \lpar \frac{p_j^{1/p_j}}{(p_j')^{1/p_j'}}  \rpar^{\dim(G^2_j)}  \BL(G^\perp, \bshs,\bspp),
\end{equation}
further clarifying the sense in which the data $(G, \bss, \bsp)$ and $(G^\perp, \bshs,\bspp)$ are dual. Evidently, this allows one (canonical) Brascamp--Lieb datum to be switched for another; see \cite{BBBCF,BJ} for more on this perspective.
Our proof consists of showing that $\BL(G^1, \bss, \bsp) = \BL((\hat{G})^4, \bss, \bspp)$, $\BL(G^4, \bss, \bsp) = \BL((\hat{G})^1, \bss, \bspp)$ and $\BL(G^3, \bss, \bsp) = \BL((\hat{G})^3, \bss, \bspp)$, reducing matters to the vector case \eqref{fid} applied to $G^2$.

\begin{example}[Young's convolution inequality]
The identification of the sharp constant in Young's convolution inequality
in the setting of general LCA groups \cite{Beckner} is of course a particular case of Theorem \ref{thm:intro-thm-2}. It also serves to illustrate the significance of Corollary \ref{cor:BJ-conjecture}. In the $m$-linear formulation of Young's inequality, $G_1, \hdots,G_m$ are a common group, $G_0$ say,
\[
G=\{(x_1,\hdots,x_m)\in G_0\times\cdots\times G_0:x_1\cdots x_m=e\},
\] 
and $\sum_{j=1}^m {1}/{p_j}=m-1$. In this case
it was shown in \cite{Beckner} that
\[
\BL(G, \bss, \bsp) = \prod_{j=1}^m \lpar \frac{p_j^{1/p_j}}{(p_j')^{1/p_j'}}  \rpar^{\dim(G^2_0)}.
\]
This follows quickly from \eqref{du} since
\[
G^\perp=\{\xi_0\otimes\cdots\otimes\xi_0:\xi_0\in\widehat{G}_0\},
\] 
and  $\BL(G^\perp, \bshs,\bspp)=1$, being the $m$-linear H\"older constant. 
\end{example}
In this paper, we put more detailed flesh on the bones that we have outlined. 
The Fourier invariance of the conditions in Theorem \ref{thm:intro-thm-2} suggests that we should look for Fourier invariance throughout our investigations, and we shall do this.
There are two main techniques that we are going to use.
First we use the structure theory of LCA groups to reduce matters to four \emph{basic types} of LCA groups: compact connected groups, vector spaces, totally disconnected topological torsion groups, and torsion free discrete groups.
Second, there are \emph{structural conditions}, namely the rank condition \eqref{eq:rank-condition-1} and its dual version, which we view through the lens of group theory rather than linear algebra.
The previous work of \cite{BCCT08}, \cite{BCCT10}\, \cite{CDKSY} and \cite{BJ} treats all the elementary groups; except in the case of vector spaces, we need to use approximation theory to extend the arguments of these authors to deal with general examples of the basic types of group.

Here is our plan of the rest of the paper. 
Section \ref{sec:background} introduces our notation and presents the necessary background material, such as Weil's formula for integration and some structure theory.
In Section \ref{sec:sim} we study how the Brascamp--Lieb constant behaves when we pass to compact quotient groups and open subgroups, clarify the passage to nondegenerate and canonical data, and show how the Brascamp--Lieb and dual Brascamp--Lieb constants behave when $G$ is approximated by elementary groups. 
This allows us to consider and slightly extend the known results for particular types of LCA groups  in Section \ref{sec:rank-cond}.
Then in Section \ref{sec:gen} we consider the general case of the inequality and establish Theorem \ref{thm:intro-thm-2}. 
In Section \ref{sec:rem} we collect some further remarks that may be helpful in the computation of Brascamp--Lieb constants in the general setting of LCA groups.
\begin{rem}
Fourier duality is one of several notions of duality that may be formulated in the general setting of this paper. For example, the Brascamp--Lieb constant $\BL(G, \bss, \bsp)$ may be characterised in terms of a form of entropy subadditivity via Legendre duality  \cite{CCE}. We refer to \cite{CHV20,BT} and the references there for further examples and context.
\end{rem}
\section{Notation and Background}\label{sec:background}

Here we recall some basic results on the dual group, the Haar measure, on homomorphisms, and on the structure of LCA groups.
Most of these results are contained in \cite{HR63, HR70}; 
other texts on LCA groups, such as \cite{Reiter-Stegeman, Rudin}, may also be useful.

\subsection{Dual groups}
The LCA group $G$ has a dual group $\hat G$, which is the group $\Hom(G,\T)$ of all homomorphisms $\xi: G \to \T$, where $\T$ denotes the multiplicative group of complex numbers of modulus one.
The dual group of $\hat G$ is $G$; the dual of $\R^a$ is $\R^a$; the dual of a compact group is discrete, and vice versa.

If $H$ is a closed subgroup of $G$, then $H$ with the relative topology and $G/H$ with the quotient topology are both LCA groups; $G/H$ is discrete if and only if $H$ is open.
Given $x \in G$, we write $\dot x$ for the coset $xH$ in $G/H$; a function $f$ on $G/H$ may be identified with an $H$-invariant function $\dot f$ on $G$ by setting $\dot{f}(x) = f(\dot x)$.

If $H$ is a closed subgroup of $G$, then the annihilator $\Ann(H, \hat G)$ of $H$ in $\hat G$, usually written $H^\perp$, is the set of all $\xi \in \hat G$ such that $\xi(y) = 1$ for all $y \in H$.
The annihilator of $H^\perp$ in $G$ is $H$.
Then $H^\perp$ is the dual group of $G/H$ and $H$ is the dual group of $\hat G/H^\perp$.
In particular, if $H$ is a closed subgroup of $G$, then the following are equivalent: $H$ is open; $G/H$ is discrete; and $H^\perp$ is compact.

\subsection{Haar measure}

Suppose that $G$ is a not necessarily abelian locally compact group.  Then $G$ has a unique (up to positive multiples) Haar measure, a left-translation-invariant regular Borel measure.
Haar measures are often normalised so that compact groups have total measure $1$ and discrete groups have counting measure (but this requirements are incompatible for finite groups, which are both discrete and compact).
We write $\wrt{x}$, $\wrt{y}$, and so on, in integrals over groups to indicate integration with respect to the Haar measure of the appropriate group, and $\abs{E}$ to indicate the Haar measure of a set $E$.
An open subgroup $H$ of $G$ is automatically closed, and we usually take the Haar measure on $H$ to be the restriction of that on $G$.

We now consider abelian groups only.
Take a Haar measure on the closed subgroup $H$ of $G$; then for any $f$ in the space $S(G)$  of test functions (to be clarified shortly) on $G$, the function
\[
x \mapsto \int_H f(x y) \wrt{y}
\]
is an $H$-invariant continuous function on $G$, which may be identified with a function in $S(G/H)$.
Further, the Haar measure on $G/H$ may be normalised in such a way that
\begin{equation}\label{eq:Weils-formula}
\int_G f(x) \wrt{x} = \int_{G/H} \int_H f(xy) \, \wrt{y} \wrt{\dot x}.
\end{equation}
This expression, sometimes called Weil's formula (see \cite[p.~88]{Reiter-Stegeman}), extends to more general measurable functions provided that appropriate precautions are observed.
We note that it is possible (see, e.g., \cite[p.~258]{Gaal}) to find a continuous nonnegative-real-valued function $g$ on $G$ such that
\[
\int_{H} g(xy) \wrt{y} = 1 
\qquad\forall x \in G.
\]

A particular case of Weil's formula arises when the subgroup $H$ of $G$ is compact: if we normalise $H$ to have measure $1$ and $G/H$ so that Weil's formula holds, then we say that $H$ and $G/H$ have the \emph{standard compact and compact quotient normalisations}.
In this case, if $f$ on $G$ is constant on cosets of $H$, then $f$ gives rise naturally to a function $\dot f$ on $G/H$, and
\[
\int_{G} f(x) \wrt{x} = \int_{G/H} \dot{f}(\dot{x}) \wrt{\dot{x}}.
\]

Another special case of Weil's formula is when $H$ is an open subgroup of $G$.
In this case, it is natural to take the Haar measure on $H$ to be the restriction of that of $G$, and the Haar measure on the discrete group $G/H$ to be counting measure; these are the \emph{standard open and open quotient normalisations}.

If $H$ is either open and not compact, or compact and not open, we choose the Haar measures on $H$ and $G/H$ as just described, unless otherwise stated.
In case $H$ is both compact and open, the choice of Haar measures on $H$ and $G/H$ needs to be specified.

Convolution on $G$ is defined in the standard way: for suitable functions $f$ and $g$,
\[
f * g (x) = \int_{G} f(y) g(y^{-1}x) \wrt y
\]

\subsection{Fourier transformation}
We define the Fourier transform $\hat f: \hat{G} \to \C$ of a suitable function $f$ on $G$ by
\[
\hat f(\xi) = \int_{G} \bar{\xi}(x) \fn f(x) \wrt x
\qquad\forall \xi \in \hat{G}.
\]
We sometimes write $\Fourier f$ instead of $\hat f$.
The Haar measure on the LCA group $\hat{G}$ may be normalised such that the Plancherel and inversion formulas hold, for ``suitable'' functions on $G$, by which we mean integrable functions on $G$ whose Fourier transforms $\hat f$ are also integrable on $\hat G$. 

\subsection{Weil's generalised Poisson summation formula}
Let $G$ be a closed subgroup of a product group $\bsG$ and $G^\perp$ be the annihilator of $G$ in $\bsGh$.
If the Haar measures of $\bsG$, $G$ and $\bsG/G$ are normalised such that Weil's integration formula \eqref{eq:Weils-formula} holds, and the Haar measures of $\bsGh$, $G^\perp$ and $\bsGh/G^\perp$ are normalised such that the Fourier inversion theorem holds, then Weil's integration formula \eqref{eq:Weils-formula} also holds for $\bsGh$, $G^\perp$ and $\bsGh/G^\perp$, and further,
\[
\int_{G} (f_1 \otimes \dots \otimes f_J)(x) \,dx
= \int_{G^\perp} (\hat f_1 \otimes \dots \otimes \hat f_J)(y) \,dy.
\]

\subsection{Test functions}
There are various candidates for the space of test functions $S(G)$ on $G$.
One possibility is to consider $G$ as $V \times H$, where $V$ is a real vector space and $H$ has a compact open subgroup, and to take the space of finite sums of product functions $f_1 \otimes f_2$, where $f_1$ is a Schwartz function on $V$ and $f_2$ has compact support on $H$ while $\hat f_2$ has compact support on $\hat G$.
A remediable source of discomfort with this choice is that the decomposition of $G$ as $V \times H$ may not be unique.
An alternative choice for the test function space that is independent of any structural assumptions is Feichtinger's minimal Segal algebra \cite{Feich, ja18}

The important properties that we need, which are shared by both these spaces, are that $\Fourier (S(G)) = S(\hat G)$, and that $S(G)$ is a dense subspace of each $L^p(G)$ space, in norm when $1 \leq p < \infty$ and in the weak-star topology when $p = \infty$.
Further, let $S(G)^+$ denote the subset of $S(G)$ of nonnegative-real-valued test functions; then there are nontrivial functions in $S(G)^+$ with support contained in arbitrarily small neighbourhoods of the identity in $G$, and there are functions in $S(G)^+$ taking the value $1$ on arbitrarily large compact sets in $G$.

\subsection{Approximation by ``smaller'' groups}
Every LCA group $G$ may be written in the form $V \times H$, where $V$ is a vector subgroup and $H$ has a compact open subgroup, $K$ say.
Let $(H_\alpha)_{\alpha \in \Alpha}$ be the net of compactly generated open subgroups of $H$ that contain $K$, ordered by inclusion, and let $G_\alpha = V \times H_\alpha$.
Then $G_\alpha \subseteq G_{\alpha'}$ when $\alpha \leq \alpha'$ and $G = \cup_{\alpha \in \Alpha} G_\alpha$.

The dual group $\hat G$ of $G$ may be identified with $\hat V \times \hat H$, and the annihilator $K^\perp$ of $K$ in $\hat H$ is open in $\hat H$.
Let $(\hat{H}_{\beta})_{ \beta\in \Beta}$ be the net of compactly generated subgroups of $\hat H$ that contain $K^\perp$, ordered by inclusion, and write $K_\beta$ for $\Ann(\hat V \times \hat H_\beta, G)$.
Then $K_\beta \subseteq K$, and as $\hat H_\beta$ becomes larger, $K_\beta$ becomes smaller; moreover $\cap_{\beta \in \Beta} K_\beta = \{ e\}$.
We may view $K_\beta$ as a subgroup of $H$ or of $H_\alpha$ for any $\alpha$, and hence also of $G$ or of $G_\alpha$.

Functions on $G$ with support in $G_\alpha$ may be identified with functions on $G_\alpha$.
Similarly, functions on $G$ whose Fourier transforms have support in $\hat G_\beta$ may be identified with functions on $G$ that are constant on cosets of $K_\beta$, or with functions on $G/K_\beta$.
We write $G_{\alpha,\beta}$ for $G_\alpha / K_\beta$; then both this group and its dual group are compactly generated.

The natural Haar measure on $G$ is the product of the Lebesgue measure on $V$ and the Haar measure of $H$, normalised such that $K$ has total mass $1$.
This induces the standard open and compact quotient Haar measure on each $G_{\alpha,\beta}$.

The group $G_{\alpha,\beta}$ is compactly generated with a compactly generated dual, and is of the form $\R^a \times \T^b \times \Z^c \times F$, where $a, b, c \in \N$ and $F$ is finite.
While $a$ is fixed, the indices $b$ and $c$ and the finite group $F$ may depend on $\alpha$ and $\beta$.
It should be noted that the standard Haar measure of $\T^b$ is $1$, and the standard Haar measure on $\Z^c$ is counting measure. 
However $F$ is automatically equipped with a measure that may be neither counting measure nor a probability measure: the image of $K$ in $F$ has total measure $1$.

Let $\mu_{\beta}$ denote the normalised Haar measure on $K_\beta$, viewed as a compactly supported measure on $G$.
Then
\[
\Fourier(\mu_\beta * f) = \chi_{\hat G_\beta} \Fourier(f)
\]
for all $f \in L^1(G)$, where $\chi_{\hat G_\beta}$ denotes the characteristic function of $\hat G_\beta$.

We may define an operator $\calE_{\alpha,\beta}$ on $L^p(G)$, by $\calE_{\alpha,\beta} f = \mu_\beta * (\chi_{G_\alpha} f)$.
Then
\[
\calE_{\alpha,\beta} f
= \chi_{G_\alpha}(\mu_\beta *  f)
= \Fourier^{-1} (\chi_{\hat G_\beta} \Fourier (\chi_{G_\alpha} f))
= \chi_{G_\alpha} \Fourier^{-1} (\chi_{\hat G_\beta} \Fourier ( f)) .
\]

\begin{lem}\label{lem:cond-exp}
For all $f \in S(G)$,
\[
\norm{\calE_{\alpha,\beta}f}_{L^p(G)} \leq \norm{f}_{L^p(G)}
\qquad\text{and}\qquad
f = \lim_{\alpha,\beta} \calE_{\alpha,\beta}f
\quad\text{in $L^p(G)$},
\]
and
\[
\norm{\Fourier \calE_{\alpha,\beta}f}_{L^p(\hat G)}
\leq \norm{\Fourier f}_{L^p(\hat G)}
\qquad\text{and}\qquad
\Fourier f = \lim_{\alpha,\beta} \Fourier (\calE_{\alpha,\beta}f)
\quad\text{in $L^p(\hat G)$};
\]
convergence is in norm when $1 \leq p < \infty$ and in the weak-star topology when $p = \infty$, and
$\norm{\calE_{\alpha,\beta}f}_{L^p(G)}$ and $\norm{\Fourier \calE_{\alpha,\beta}f}_{L^p(\hat G)}$ increase, and converge to $\norm{f}_{L^p(G)}$ and $\norm{\Fourier f}_{L^p(\hat G)}$.
\end{lem}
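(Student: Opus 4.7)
The plan is to treat the $L^p(G)$ and $L^p(\hat G)$ assertions symmetrically, exploiting the two equivalent expressions
\[
\calE_{\alpha,\beta}f = \mu_\beta * (\chi_{G_\alpha} f) = \chi_{G_\alpha}\Fourier^{-1}(\chi_{\hat G_\beta} \Fourier f)
\]
derived just before the statement. The $L^p(G)$-contraction follows from the first expression: truncation by $\chi_{G_\alpha}$ is trivially contractive, and convolution with the probability measure $\mu_\beta$ is contractive by Young's inequality. For the Fourier-side contraction I would take the Fourier transform of the second expression. The function $\chi_{G_\alpha}$, regarded as a density against Haar measure, has Fourier transform equal (for the normalisations specified in the discussion preceding the Lemma) to the probability Haar measure $\nu_\alpha$ of the compact subgroup $G_\alpha^\perp \subseteq \hat G$; hence $\Fourier\calE_{\alpha,\beta}f = \nu_\alpha * (\chi_{\hat G_\beta}\Fourier f)$, and the $L^p(\hat G)$-contraction follows as before.

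For monotonicity in $\alpha$, when $\alpha' \geq \alpha$ the inclusion $K_\beta \subseteq G_\alpha$ shows that $\calE_{\alpha,\beta}f$ is supported in $G_\alpha$, and a direct computation shows that $\calE_{\alpha',\beta}f$ and $\calE_{\alpha,\beta}f$ coincide pointwise on $G_\alpha$; hence $\calE_{\alpha',\beta}f - \calE_{\alpha,\beta}f$ is supported in $G_{\alpha'} \setminus G_\alpha$, disjoint from the support of $\calE_{\alpha,\beta}f$, whence $\norm{\calE_{\alpha',\beta}f}_p \geq \norm{\calE_{\alpha,\beta}f}_p$ for all $p$. For monotonicity in $\beta$, when $\beta' \geq \beta$ the containment $K_{\beta'} \subseteq K_\beta$ forces $\mu_\beta * \mu_{\beta'} = \mu_\beta$ (by $K_{\beta'}$-invariance of $\mu_\beta$), so $\mu_\beta * g = \mu_\beta * (\mu_{\beta'} * g)$ and Young's inequality yields $\norm{\mu_\beta * g}_p \leq \norm{\mu_{\beta'} * g}_p$. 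The monotonicity of $\norm{\Fourier\calE_{\alpha,\beta}f}_p$ in $\beta$ is immediate from the pointwise monotonicity of $\chi_{\hat G_\beta}$, and the monotonicity in $\alpha$ of the Fourier-side norm is the dual of the $\beta$-argument, applied to $\nu_\alpha$ in place of $\mu_\beta$.

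For the convergence, when $1 \leq p < \infty$ the triangle inequality
\[
\norm{\calE_{\alpha,\beta}f - f}_p
\leq \norm{\chi_{G_\alpha} f - f}_p + \norm{\mu_\beta * (\chi_{G_\alpha} f) - \chi_{G_\alpha} f}_p
\]
reduces matters to two standard facts: $\chi_{G_\alpha} f \to f$ in $L^p(G)$ by dominated convergence (since $\bigcup_\alpha G_\alpha = G$), and $(\mu_\beta)$ is an approximate identity in $L^p$. The latter follows from the compactness of the $K_\beta$ together with $\bigcap_\beta K_\beta = \{e\}$ via a finite-intersection-property argument that eventually nests $K_\beta$ inside any given neighbourhood of the identity. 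For $p = \infty$, weak-star convergence follows by duality, using the self-adjointness identity $\int \calE_{\alpha,\beta}f \cdot g \wrt x = \int f \cdot \calE_{\alpha,\beta}g \wrt x$ (which holds because $\mu_\beta$ is symmetric and the two expressions for $\calE_{\alpha,\beta}$ allow convolution and truncation to be interchanged) combined with the $L^1$-convergence already established. The corresponding assertions for $\Fourier\calE_{\alpha,\beta}f$ follow identically from the dual expression. The norm convergence $\norm{\calE_{\alpha,\beta}f}_p \nearrow \norm{f}_p$ then comes for free by combining monotonicity with the convergence of $\calE_{\alpha,\beta}f$ (and lower semicontinuity of the norm, in the weak-star case). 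The chief technical point requiring care is verifying that the Haar-measure normalisations are mutually consistent so that $\nu_\alpha$ is genuinely a probability measure; this is precisely what the ``natural Haar measure'' discussion preceding the Lemma is designed to arrange.
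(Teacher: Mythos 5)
Your proposal is correct and follows essentially the same route as the paper: contractivity from truncation plus convolution with a probability measure, the Fourier-side statements via the dual operator $g \mapsto \chi_{\hat G_\beta}(\mu_\alpha * g)$ with $\mu_\alpha$ the normalised Haar measure on the compact group $(G_\alpha)^\perp$, convergence first for test functions and then by density, and monotonicity of the norms. You simply supply more detail (the disjoint-support and $\mu_\beta * \mu_{\beta'} = \mu_\beta$ arguments for monotonicity, and the approximate-identity step) than the paper's terser proof.
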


\begin{proof}
The operator $\calE_{\alpha,\beta}$ is contractive on $L^p(G)$ for $p\in [1,\infty]$ as multiplication by characteristic functions and convolution with probability measures are both so.
Further, $\calE_{\alpha,\beta}f \to f$ in $L^p(G)$ for all $p \in [1,\infty]$ for all $f$ in $S(G)$.
The first part now follows by density of $S(G)$ in $L^p(G)$.

Turning to the second part, given that $\Fourier (\calE_{\alpha,\beta}f) = (\hat \calE_{\alpha,\beta}\Fourier f)$, where
\[
\hat \calE_{\alpha,\beta}g = \chi_{\hat G_\beta}(\mu_\alpha *  g)
= \mu_\alpha * (\chi_{\hat G_\beta} g),
\]
and $\mu_\alpha$ is the normalised Haar measure on the compact group $(G_\alpha)^\perp$, the dual result follows from a similar argument.

As $G_\alpha$ increases to $G$, $\norm{ \chi_{G_\alpha} f}_{L^p(G)}$ increases to $\norm{ f}_{L^p(G)}$.
Likewise $\norm{ \mu_\beta * f}_{L^p(G)}$ increases to $\norm{ f}_{L^p(G)}$ as $K_\beta$ shrinks to $\{e\}$.
\end{proof}

\subsection{More structure theory}
The classical structure theory of LCA groups reduces matters to elementary groups.
Unfortunately however, homomorphisms of elementary groups do not always respect the decomposition $\R^a \times \T^b \times \Z^c \times F$, even if they are open and surjective, and our analysis relies on having a structure that is respected by homomorphisms.
Our point of view is not standard, but all the underlying ideas are well known.

The connected component $G_{c}$ of $G$ (by which we mean the connected component of the identity) is its unique maximal closed connected subgroup $G_{c}$, and $G_c = \bigcap_{G_1 \in \calO(G)} G_1$, where $\calO(G)$ is the collection of open subgroups of $G$ (see \cite[(7.8)]{HR63}).
The quotient group $G/G_{c}$ is totally disconnected, and its connected component is trivial.
By a theorem of van Dantzig \cite{vD36}, the compact open subgroups of $G/G_c$ form a neighbourhood base for the topology of $G/G_c$ at the identity.

By \cite[(9.2)]{HR63}, if $x$ is in a LCA group $G$, then either $\{x^n : n \in \Z\}$ is an infinite discrete subgroup of $G$ or $\{x^n : n \in \Z\}$ is relatively compact; we say that $x$ is bounded in the second case.
The set $G_{b}$ of all bounded elements of $G$ is a closed subgroup of $G$; see \cite[(9.26)]{HR63} (this reference uses the terminology ``compact elements'').

\begin{lem}
Let $G$ be an LCA group.
Then 
\begin{enumerate}
  \item the quotient group $G/G_{b}$ has no nontrivial bounded elements;
  \item $G_{c} \cap G_{b}$ is the (unique) maximal connected compact subgroup of $G$, and is equal to $(G_{c})_{b}$ and $(G_{b})_{c}$;
  \item $G_{c} G_{b}$ is an open subgroup of $G$, and $x \in G_{c} G_{b}$ if and only if $x G_{b} \in (G/G_{b})_{c}$ if and only if $x G_{c} \in (G/G_{c})_{b}$.
\end{enumerate}
\end{lem}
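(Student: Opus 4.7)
The plan is to reduce the three claims to the structural decomposition $G = V \times H$ (recalled at the end of the excerpt), where $V$ is a vector subgroup and $H$ contains a compact open subgroup $K$. In this decomposition one reads off immediately that $G_c = V \times H_c$, $G_b = \{e\}\times H_b$ and $G_c \cap G_b = \{e\}\times H_c$, since $V$ is connected and vector so has only trivial bounded elements, while $H_c \subseteq K$ (a clopen neighbourhood of the identity) is therefore compact and contained in $H_b$. As $K\subseteq H_b$, the subgroup $H_b$ is open in $H$, so $G_c G_b = V\times H_b$ is open in $G$, handling the openness claim in (3).

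For (1), I would show $H/H_b$ is discrete (quotient by an open subgroup) and torsion-free: if $x^n \in H_b$ then $\overline{\langle x^n\rangle}$ is compact, and $\overline{\langle x\rangle}$, being a finite union of translates of $\overline{\langle x^n\rangle}$, is compact as well, so $x \in H_b$. Then $G/G_b = V \times (H/H_b)$ is a product of a vector group and a discrete torsion-free group and contains no nontrivial bounded elements.

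For (2), any connected compact subgroup $C$ of $G$ projects trivially to $V$ (vector groups have no nontrivial compact subgroups) and then, being connected and passing through the identity, lies in $\{e\}\times H_c$; conversely $\{e\}\times H_c = G_c \cap G_b$ is itself connected and compact, so it is the maximal such subgroup. The identifications $(G_c)_b = G_c \cap G_b = (G_b)_c$ are then immediate from the explicit formulas: the bounded elements of $V\times H_c$ are exactly those with trivial $V$-coordinate (as $H_c$ is already compact), and the connected component of the identity in the open subgroup $\{e\}\times H_b$ of $G$ coincides with $\{e\}\times H_c$.

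The main content of (3) is the pair of remaining equivalences, which I would reduce to the following lifting principle: \emph{if $L$ is any LCA group with $L/L_c$ compact, then $L = L_c L_b$}. To prove this, decompose $L = V'\times H'$; compactness of $L/L_c \simeq H'/H'_c$ ensures that for $h \in H'$ the image of $\overline{\langle h\rangle}$ in $H'/H'_c$ is compact while $\overline{\langle h\rangle}\cap H'_c$ is closed in the compact group $H'_c$, so $\overline{\langle h\rangle}$ is itself compact and $h \in H'_b$. Granted this, for the equivalence $xG_c \in (G/G_c)_b \Leftrightarrow x \in G_c G_b$, I use that $G/G_c$ is totally disconnected, so any bounded $y \in G/G_c$ is contained in the compact open subgroup $U\,\overline{\langle y\rangle}$ (where $U$ is any compact open subgroup of $G/G_c$); the preimage $\pi_c^{-1}(U\,\overline{\langle y\rangle})$ has connected component $G_c$ with compact quotient, so the lifting principle places it inside $G_c G_b$. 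The equivalence $xG_b \in (G/G_b)_c \Leftrightarrow x \in G_c G_b$ is then direct from the decomposition, since $(G/G_b)_c = V \times (H/H_b)_c = V \times \{e\}$ (the second factor being discrete), whose preimage is exactly $V\times H_b = G_c G_b$. The only genuinely non-routine step, and the main obstacle, is this auxiliary lifting principle.
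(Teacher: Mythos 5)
Your proof is correct, and it reaches the same conclusions by a slightly different structural route. The paper argues \emph{locally}: given $x\in G$, it places $x$ inside a compactly generated open subgroup $G_0\simeq\R^a\times\Z^b\times K$ with $K$ compact, reads off $(G_0)_b\simeq K$ and $(G_0)_c\simeq\R^a\times K_c$, and then disposes of each assertion by this reduction (most of the verification is left as ``similarly straightforward''). You instead invoke the \emph{global} principal structure theorem $G=V\times H$ with $H$ containing a compact open subgroup $K$, and compute $G_c=V\times H_c$, $G_b=\{e\}\times H_b$, $G_cG_b=V\times H_b$ explicitly; from these formulas every claim follows by inspection, and your write-up supplies the details the paper omits. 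Both arguments rest on the same structure theory, so the difference is one of packaging rather than substance; your version has the advantage of yielding all three parts from a single set of identities. The one place where you take on more machinery than needed is the equivalence $xG_c\in(G/G_c)_b\Leftrightarrow x\in G_cG_b$: since $H_c\subseteq K$ is compact, if $hH_c$ is bounded in $H/H_c\simeq G/G_c$ then the preimage in $H$ of the compact group $\overline{\langle hH_c\rangle}$ is a compact-by-compact, hence compact, subgroup containing $h$, so $h\in H_b$ directly; this makes your auxiliary ``lifting principle'' (correct though it is) unnecessary.
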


\begin{proof}
Given any $x \in G$, there is a compactly generated open subgroup $G_0$ of $G$ that contains $x$, which is of the form $\R^a \times \Z^b \times K$, where $K$ is compact; clearly, 
\[
G_0 \cap G_{b} = (G_0)_{b} \simeq K
\quad\text{and}\quad
G_0 \cap G_{c} = G_{c} \simeq \R^a \times K_{c}. 
\]
Now, for instance, if $xG_{b}$ is bounded in $G/G_{b}$, then $x \in K \subseteq G_{b}$. 
The other assertions are similarly straightforward.
\end{proof}

Recall that $G^{1} := G_{b} \cap G_{c}$; $G^{2} := G_{c}/(G_{b} \cap G_{c})$; $G^{3} := (G_{b} G_{c}) /G_{c}$; and  $G^{4} := G/(G_{b} G_{c})$.
From \cite[(24.17)]{HR63}, $\Ann(G_{b}, \hat{G}) = (\hat G)_{c}$, and $\Ann(G_{c},\hat G) = (\hat G)_{b}$.
Hence $\Ann(G_{b}\cap G_{c}, \hat{G}) = (\hat G)_{b}(\hat G)_{c}$, and $\Ann(G_{b} G_{c},\hat G) = (\hat G)_{b} \cap (\hat G)_{c}$.
It follows that the compact connected group $G^{1}$ may be identified with the dual of the discrete torsion-free group $(\hat{G})^4$, and the discrete torsion-free group $G^{4}$ may be identified with the dual of the connected compact group $(\hat{G})^1$.
Further, $(G_{b} G_{c}) / (G_{b} \cap G_{c})$ is a direct product of a vector group (its connected component) and the totally disconnected closed subgroup of all bounded elements, isomorphic to $G_{c} / (G_{b} \cap G_{c})$ and $(G_{b} G_{c}) / G_{c}$ respectively, and the same holds for $(\hat{G})_{b} / (\hat{G})_{b} \cap (\hat{G})_{c} $; the two vector groups are dual, as are the two totally disconnected groups.
It should now be clear that if $G$ is of the form $\R^a \times \T^b \times \Z^c \times F$, where $a, b, c \in \N$ and $F$ is a finite group, then $G^1 \simeq \T^b$, $G^2 \simeq \R^a$, $G^3 \simeq F$ and $G^4 \simeq \Z^c$.

It is natural to normalise the Haar measure on $G^1$ to have total mass $1$ and to give $G/G^1$ the standard compact quotient measure.
Likewise, we normalise the Haar measures on $G_bG_c$ and on $G^4$ to be the standard open subgroup and open quotient measures.
The Haar measure on $G$ then induces a Haar measure on $G_bG_c/(G_b \cap G_c)$, which is the direct product of a vector group and a totally disconnected group of bounded elements.
Fixing the Haar measure on one of the factors then determines that on the other factor.
The choice here can be arbitrary (at least if both $G^2$ and $G^3$ are nontrivial), as any change multiplies $\BL(G^2,\bss^2,\bsp)$ by a factor and divides $\BL(G^3,\bss^3,\bsp)$ by the same factor, leaving their product unchanged.

It may be worth providing some examples of nonelementary LCA groups, and how the approximation process works in these cases.

\begin{example}
The rational numbers $\Q$, equipped with the discrete topology and counting measure, is a countable torsion-free group that is not finitely generated.
Indeed, if we take a finite set $S$ of rational numbers, and write them all with a lowest common denominator $N$, then it is clear that all these elements lie inside the additive group $\Z /N$ of all rational numbers with denominator $N$, and so $\lip S \rip$, the group generated by $S$, is a proper subgroup of $\Q$. 
In fact, $\lip S \rip = \Z/N$.
The group $\Q$ is the inductive limit $\bigcup_S \lip S \rip$ as $S$ increases and the common denominator $N$ increases.
For this group $G$, the groups $G^1$, $G^2$ and $G^3$ are all trivial, while $G = G^4$.
The approximating finitely generated groups are all copies of the group of integers $\Z$.
It is natural to equip $\Q$ and $G^4$ with counting measure. 
\end{example}

\begin{example}
The Bohr compactification $\bar{\R}$ of the real numbers is a nonseparable connected compact group, whose dual group is $\R_d$, that is, $\R$ with the discrete topology.
A finitely generated subgroup $\Gamma$ of $\R_d$ is isomorphic to $\Z^m$ for some positive integer $m$, the annihilator $\Ann(\Gamma, \bar{\R})$ is a closed subgroup of $\bar{\R}$, and the quotient group is isomorphic to $\T^m$.
For this group $G$, the groups $G^2$, $G^3$ and $G^4$ are all trivial, while $G = G^1$.
The approximating groups with finitely generated duals are all of the form $\T^m$ for some finite (but unbounded) $m$.
It is natural to equip both $\bar{\R}$ and the approximating groups with measures of total mass $1$.
\end{example}

\begin{example}
Let $p$ be a prime number.
The field of $p$-adic numbers $\Q_p$ is the completion of $\Q$ in the translation-invariant metric $d$ such that $d(x,0) = p^{-n}$, when $x = r p^n/s$, where $r,s \in \Z$ are coprime to $p$.
The ring of $p$-adic integers $\O_p$ is the completion of $\Z$ in $\Q_p$; $\O_p$ is a compact open subgroup of $\Q_p$.
Equipped with its additive structure, $\Q_p$ is an example of a totally disconnected LCA group. 
It has an increasing family of compact open subgroups $K_n$, where $n \in \Z$, with the property that $\bigcap_n K_n = \{e\}$, $\bigcup_n K_n = \Q_p$, and $K_{n+1}/K_n$ is isomorphic to $\Z_p$, the ring of integers modulo $p$; we may arrange matters so that $K_0 = \O_p$.
For this group $G$, the groups $G^1$, $G^2$ and $G^4$ are all trivial, while $G = G^3$.
The approximating groups that are compactly generated with compactly generated dual groups are all finite $p$-groups, that is, all elements are of order $p^n$ for some positive integer $n$.
It is natural to normalise the Haar measure on $\Q_p$ so that its restriction to $\O_p$ is a probability measure and equip the approximating groups with the standard open and compact quotient normalisations.
\end{example}

\begin{example}
The ring of adeles is used in algebraic number theory. 
It consists of the restricted direct product $\R \times \Q_2 \times \Q_3 \times \dots$, where restricted direct product means that each element $(x_0, x_2, x_3, \dots)$ has the property that all but finitely many of the $x_p$ lie in $\O_p$.
For this group $G$, the groups $G^1$ and $G^4$ are both trivial, while $G^2 = \R$ and $G^3$ is the restricted direct product of the various groups of $p$-adic numbers.
We may view $\{0\} \times \O_2 \times \O_3 \times \dots$ as the ``natural choice'' of totally disconnected compact subgroup of $G$.
The approximating groups are products of $\R$ and finitely many finite $p$ groups, obtained by factoring out $\{0\} \times K_2 \times K_3 \times \dots$, where all but finitely many of the compact open subgroups $K_p$ of $\Q_p$ are equal to $\O_p$. 
The natural Haar measures are the product of the Haar measures on $\R$ and on the factors $\Q_p$ on the adeles, and the standard open and compact quotient measures on the approximating groups.
\end{example}

\subsection{Open and proper homomorphisms}\label{Subsect:proper}
The results from Lemma \ref{lem:good-or-bad} to Proposition \ref{prop:new-homos-from-old} apply to all locally compact groups, including those that are not abelian.

\begin{lem}\label{lem:good-or-bad}
Suppose that $\sigma: G \to H$ is a homomorphism of not necessarily abelian locally compact groups.
If $\sigma$ is open, then $|\sigma(U)| > 0$ for all nonempty open subsets $U$ of $G$.
If $\sigma$ is not open, then $|\sigma(V)| = 0$ for all compact subsets $V$ of $G$.
Finally, $\sigma$ is open if and only if $\sigma(G_0)$ is open in $H$ for all open subgroups $G_0$ of $G$.
\end{lem}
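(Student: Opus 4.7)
\medskip

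The first claim is immediate: openness of $\sigma$ sends the nonempty open set $U$ to a nonempty open subset of $H$, and every nonempty open set in a locally compact group has positive Haar measure.

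For the second claim I would prove the contrapositive: if $\abs{\sigma(V_0)} > 0$ for some compact $V_0 \subseteq G$, then $\sigma$ is open. First I would reduce to the $\sigma$-compact case. Pick a symmetric compact neighborhood $W$ of $e$ in $G$ containing $V_0$, and let $G_0 := \bigcup_{n \geq 1} W^n$, which is an open $\sigma$-compact subgroup of $G$. Then $\sigma(G_0)$ is a countable union of compact (hence Borel) subsets of $H$, and $\abs{\sigma(W)} \geq \abs{\sigma(V_0)} > 0$. By the classical Steinhaus theorem for locally compact groups, $\sigma(W)\sigma(W)^{-1}$ is a neighborhood of $e$ in $H$; since this set lies in the subgroup $\sigma(G_0)$, that subgroup is open in $H$, and in particular locally compact. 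The restriction $\sigma|_{G_0} : G_0 \to \sigma(G_0)$ is a continuous surjective homomorphism between $\sigma$-compact locally compact groups, so by the classical open mapping theorem it is open. To conclude that $\sigma$ itself is open, take an arbitrary open $U \subseteq G$ and any $x \in U$; the set $V_x := x^{-1}U \cap G_0$ is an open neighborhood of $e$ in $G_0$, so $\sigma(V_x)$ is open in $\sigma(G_0)$, and hence in $H$, and $\sigma(x)\sigma(V_x) = \sigma(xV_x) \subseteq \sigma(U)$ is then an open neighborhood of $\sigma(x)$.

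The third claim follows similarly. The ``only if'' direction is immediate, since the $\sigma$-image of any open subgroup is both open and a subgroup. For the ``if'' direction, given open $U \subseteq G$ and $x \in U$, choose a compact symmetric neighborhood $W$ of $e$ with $xW \subseteq U$ and let $G_0$ be the open $\sigma$-compact subgroup it generates; by hypothesis $\sigma(G_0)$ is open in $H$, and again the open mapping theorem shows $\sigma|_{G_0}$ is open, so $\sigma(xW) \subseteq \sigma(U)$ is an open neighborhood of $\sigma(x)$ in $H$.

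The main technical ingredients are the Steinhaus theorem and the open mapping theorem for $\sigma$-compact locally compact groups, both of which are classical and available in the possibly nonabelian setting. The key conceptual step, and the only slight subtlety, is the reduction from the general locally compact $G$ to a compactly generated (hence $\sigma$-compact) open subgroup $G_0$; once this reduction is made, the Borel measurability of $\sigma(G_0)$ (guaranteed by $\sigma$-compactness) legitimises the application of Steinhaus, and everything else is routine bookkeeping with translations.
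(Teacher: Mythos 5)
Your proof is correct, and its architecture differs from the paper's even though the measure-theoretic engine is the same. The paper proves the Steinhaus-type input inline: it observes that $\chi_{\sigma(V)} \ast \chi_{\sigma(V)}$ is continuous, nonvanishing at the identity and supported in $\sigma(V^2)$, and (after a preliminary covering argument showing that the images of all nonempty relatively compact open sets have positive measure as soon as one does) applies this at an arbitrary point $z$ of an arbitrary relatively compact open $U$ to exhibit an open neighbourhood of $\sigma(z)$ inside $\sigma(U)$ directly --- no open mapping theorem is needed. You instead use Steinhaus only to conclude that the image of the compactly generated open subgroup $G_0$ is an open subgroup of $H$, and then invoke the open mapping theorem for $\sigma$-compact locally compact groups to upgrade this to openness of $\sigma|_{G_0}$, finishing with a translation argument. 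Your route buys brevity by outsourcing the Baire-category content to two named classical theorems, and your reduction to $G_0$ (with monotonicity of measure under $V_0 \subseteq W$) neatly replaces the paper's covering lemma; the paper's route is self-contained and never needs the OMT. Two cosmetic points: in the final step of the third claim, $\sigma(xW)$ is not itself open since $W$ is compact, so you should pass to the interior of $W$, an open neighbourhood of $e$ in $G_0$; and the application of Steinhaus is legitimate precisely because $\sigma(W)$ is compact, hence measurable and of finite measure, which your reduction guarantees.
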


\begin{proof}
If $U$ and $V$ are nonempty relatively compact open sets in $G$, then $V$ may be covered by finitely many translates of $U$ and vice versa, and $\sigma(V)$ may be covered by finitely many translates of $\sigma(U)$  and vice versa.
It follows that $\sigma(U)$ has null measure if and only if $\sigma(V)$ has null measure.

In light of this observation and the fact that the Haar measure of a nonempty open set is strictly positive, it suffices to prove that if $U$ is a nonempty relatively compact open set in $G$ and $\sigma(U)$ has positive Haar measure in $H$, then $\sigma(U)$ is open in $H$.

Given any $z \in U$, $z^{-1}U$ is a relatively compact open neighbourhood of the identity in $G$, and by \cite[p.~18]{HR63}, it is possible to find a relatively compact open neighbourhood $V$ of the identity in $G$ such that $V^2 \subseteq z^{-1}U$.
It will suffice to show that $\sigma(V^2)$ contains an open neighbourhood $W$ of the identity in $H$, for then $\sigma(z)W$ is an open neighbourhood of $\sigma(z)$ in $H$ that is contained in $\sigma(zV^2)$, and hence in $\sigma(U)$.

Since $V$ is relatively compact and $\sigma$ is continuous, $\sigma(V)$ is relatively compact in $H$ and has finite Haar measure, which is not $0$ since the measure of $\sigma(U)$ is not $0$.
Then the convolution on $H$ of the characteristic function of $\sigma(V)$ with itself, given by the formula
\[
\chi_{\sigma(V)} \ast \chi_{\sigma(V)}(x)
= \int_H \chi_{\sigma(V)}(y) \, \chi_{\sigma(V)}(y^{-1}x) \wrt{y},
\]
is a continuous function on $H$ (see \cite[p.~4]{Rudin}) that does not vanish at the identity but vanishes outside $\sigma(V^2)$, and $\{y \in H: \chi_{\sigma(V)} \ast \chi_{\sigma(V)}(y) \neq 0\}$ is the desired set $W$.

If $\sigma$ is open, then $\sigma(G_0)$ is open in $H$ for all open subgroups $G_0$ of $G$.
Conversely, if this last condition holds and $U$ is a relatively compact open subset of $G$, then $V = U \cup U^{-1}$ is open, so $\cup_{n \in \Z} V^n$ is an open subgroup of $G$.
Since $\sigma(\cup_{n \in \Z} V^n)$ is open, it has positive measure, and so at least one set $\sigma(V^n)$ has positive measure. 
Since $V^n$ is a relatively compact open set in $G$ and $\sigma(V^n)$ has positive measure, $\sigma$ is open.
\end{proof}

A mapping is said to be \emph{proper} if the inverse image of every compact set is compact.

\begin{lem}\label{lem:proper-homos}  
Suppose that $\sigma: G \to H$ is a homomorphism of not necessarily abelian locally compact groups.
Then $\sigma$ is proper if and only if $\ker(\sigma)$ is compact, $\sigma(G)$ is closed in $H$, and $\sigma: G \to \sigma(G)$ is open when $\sigma(G)$ carries the relative topology as a subspace of $H$.
\end{lem}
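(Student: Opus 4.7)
The plan is to factor $\sigma$ through the quotient by its kernel, so $\sigma = \bar\sigma \circ \pi$, where $\pi\colon G \to G/\ker(\sigma)$ is the canonical quotient map and $\bar\sigma\colon G/\ker(\sigma) \to H$ is the induced continuous injective homomorphism. Both directions are then reduced to two auxiliary observations: (a) a continuous injective homomorphism between locally compact Hausdorff topological groups that is proper is a closed embedding, and (b) the quotient map $\pi\colon G \to G/N$ by a compact (normal) subgroup $N$ is itself proper.

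For the forward direction, assume $\sigma$ is proper. Then $\ker(\sigma) = \sigma^{-1}(\{e_H\})$ is compact. Since $\sigma^{-1}(K)$ is saturated with respect to $\pi$ for every $K \subseteq H$, we have $\bar\sigma^{-1}(K) = \pi(\sigma^{-1}(K))$, which is compact as the continuous image of a compact set; hence $\bar\sigma$ is also proper. A proper continuous map from a locally compact Hausdorff space to a locally compact Hausdorff space is closed (every point of the target has a compact neighbourhood, whose preimage is compact and thus closed, and this is enough to conclude closedness). Applying this to $\bar\sigma$, injective and proper, yields that $\bar\sigma$ is a homeomorphism onto its (closed) image $\sigma(G) = \bar\sigma(G/\ker(\sigma))$. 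In particular $\sigma(G)$ is closed in $H$, and since $\pi$ is open and $\bar\sigma$ is a homeomorphism onto $\sigma(G)$, the composition $\sigma\colon G \to \sigma(G)$ is open.

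For the reverse direction, assume that $\ker(\sigma)$ is compact, $\sigma(G)$ is closed, and $\sigma\colon G \to \sigma(G)$ is open. Then $\bar\sigma\colon G/\ker(\sigma) \to \sigma(G)$ is a continuous, open, bijective homomorphism, hence a topological isomorphism; in particular $\bar\sigma^{-1}$ carries compact sets to compact sets. It therefore suffices to prove that $\pi$ is proper. Let $K \subseteq G/\ker(\sigma)$ be compact. For each point of $K$ choose an open relatively compact neighbourhood in $G$ of a preimage under $\pi$; the $\pi$-images of these sets form an open cover of $K$, so finitely many $U_1,\dots, U_n$ cover $K$. Then
\[
\pi^{-1}(K) \subseteq \bigcup_{i=1}^{n} U_i \ker(\sigma),
\]
and each $\overline{U_i} \ker(\sigma)$ is compact as the image of the compact set $\overline{U_i} \times \ker(\sigma)$ under the continuous multiplication map. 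Since $\pi^{-1}(K)$ is closed in $G$, it is a closed subset of a compact set and hence compact. Finally, for any compact $K \subseteq H$, $K \cap \sigma(G)$ is compact (using that $\sigma(G)$ is closed), so $\bar\sigma^{-1}(K \cap \sigma(G))$ is compact, and
\[
\sigma^{-1}(K) = \pi^{-1}\bigl(\bar\sigma^{-1}(K \cap \sigma(G))\bigr)
\]
is compact by properness of $\pi$. Thus $\sigma$ is proper.

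The chief technical obstacle is the properness of the quotient map $\pi$ by a compact subgroup; everything else is a routine application of the closed map property of proper maps between locally compact Hausdorff spaces, combined with the standard fact that quotient maps of topological groups are open.
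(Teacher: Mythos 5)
Your proof is correct, and while it shares the paper's overall skeleton---factor $\sigma$ through $G/\ker(\sigma)$ and reduce to the injective case---the two core steps are carried out genuinely differently. For the forward implication the paper first shows that $\sigma(G)$ is closed by a net argument and then deduces openness from properness via Haar measure: a compact set of positive measure is covered by finitely many translates of $\sigma(V)$, so $\sigma(V)$ has positive measure, and Lemma~\ref{lem:good-or-bad} then gives openness. You instead invoke the purely topological fact that a proper continuous map between locally compact Hausdorff spaces is closed, so the injective induced map $\bar\sigma$ is a homeomorphism onto its closed image; this yields closedness of $\sigma(G)$ and openness of $\sigma: G \to \sigma(G)$ in one stroke, with no measure theory. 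For the converse, the paper treats the bijective case by pulling back open covers along the open bijection and simply asserts that $\sigma$ is proper if and only if the induced map on $G/\ker(\sigma)$ is; you make that assertion honest by proving explicitly that the quotient map by a compact subgroup is proper (the covering argument with the compact sets $\overline{U_i}\,\ker(\sigma)$), which is precisely the ingredient the paper leaves implicit. Each route buys something: the paper's reuses machinery already established in the section and stays within its Haar-measure framework, while yours is self-contained, avoids measure theory entirely, and isolates the properness of compact-kernel quotient maps as a reusable fact.
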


\begin{proof}
First, if $\ker(\sigma)$ is not compact, then $\sigma^{-1}(\{e\})$ is not compact and $\sigma$ is not proper.
If $\ker(\sigma)$ is compact, and $\dot\sigma$ is the induced homomorphism from $G/\ker(\sigma)$ to $H$, then $\sigma$ is proper if and only if $\dot\sigma$ is proper.
Hence we may assume that $\sigma$ is injective.

If $x_\nu \in G$ and $\sigma(x_\nu) \to y$ in $H$, then $\sigma(x_\nu)$ is eventually in any compact neighbourhood $V$ of $y$ in $H$, whence $x_\nu$ lies eventually in the compact set $\sigma^{-1}(V)$ in $G$.
If $x$ is a limit point of $(x_\nu)_\nu$, then there is a subnet $(x_\mu)_\mu$ of $(x_\nu)_\nu$ that converges to $x$; whence $(\sigma(x_\mu))_\mu$ converges to $\sigma(x)$.
It follows that $\sigma(x) = y$, which shows that $y \in \sigma(G)$ and so $\sigma(G)$ is closed in $H$.

Next, define $\tilde{H} := \sigma(G)$ and $\tilde\sigma: G \to \tilde{H}$ by $\tilde\sigma(x) := \sigma(x)$ for all $x\in G$.
Then $\sigma$ is proper if and only if $\tilde{\sigma}$ is proper.
Hence we may assume that $\sigma$ is surjective.

Assume for the rest of this proof that $\sigma$ is a (continuous) algebraic isomorphism from $G$ onto $H$. 
We shall show that $\sigma$ is proper if and only if it is a homeomorphism.

Suppose that $\sigma$ is proper.
Take a compact subset $K$ of $H$ of positive Haar measure, then $\sigma^{-1}(K)$ is compact in $G$.
Let $V$ be a relatively compact open set in $G$.  
Then finitely many translates $x_1V$, \dots, $x_mV$ of $V$ cover $\sigma^{-1}(K)$, and so finitely many translates $\sigma(x_1)\sigma(V)$, \dots, $\sigma(x_m)\sigma(V)$ of $\sigma(V)$ cover $K$.
It follows that $\sigma(V)$ has positive Haar measure in $H$, and hence $\sigma$ is open by Lemma \ref{lem:good-or-bad}.

Suppose now that $\sigma$ is open. 
Take a compact subset $K$ of $H$ and a cover of $\sigma^{-1}(K)$ by open sets $U_\gamma$.
Then $(\sigma^{-1}(U_\gamma))$ is a cover of $K$ by open sets, so has a finite subcover; this gives us a finite subcover of $(U_\gamma)$, whence $\sigma^{-1}(K)$ is compact.
\end{proof}

We are interested in vector homomorphisms $\bss: G \to \bsG$, where $\bsG := G_1 \times\dots\times G_J$.

\begin{lem}\label{lem:proper-vector-homos}
Suppose that $\bss: G \to \bsG$ is a proper vector homomorphism.
Then $\ker(\bss)$ is compact, the subgroup $\bss(G)$ is closed in $\bsG$ and each $\sigma_j$ is open. 
\end{lem}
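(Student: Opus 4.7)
My plan is to treat the three conclusions in turn, with the first two following immediately from Lemma \ref{lem:proper-homos} applied to $\bss$ itself, and the third requiring an additional argument. For the compactness of $\ker(\bss)=\bigcap_j \ker(\sigma_j)$, I observe that $\ker(\bss) = \bss^{-1}(\{e_{\bsG}\})$, a preimage of a compact singleton under the proper map $\bss$, hence compact. For the closedness of $\bss(G)$ in $\bsG$, I apply Lemma \ref{lem:proper-homos} to the proper map $\bss : G \to \bsG$; the same lemma also tells me that $\bss$ is open onto $\bss(G)$ when the latter is equipped with the relative topology from $\bsG$, a fact I will need in the next step.

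For the openness of each $\sigma_j$, I factor $\sigma_j = \pi_j \circ \bss$, where $\pi_j : \bsG \to G_j$ is the canonical projection. Since $\bss : G \to \bss(G)$ is already known to be open, it suffices to verify that the restricted projection $\pi_j\vert_{\bss(G)} : \bss(G) \to G_j$ is open in the relevant sense. My approach is to apply Lemma \ref{lem:good-or-bad}: take a relatively compact open neighbourhood $V$ of the identity in $G$; then $\bss(V)$ is open in $\bss(G)$ and so has positive Haar measure in this closed subgroup of $\bsG$. Weil's integration formula applied to the quotient of $\bss(G)$ by $\ker(\pi_j\vert_{\bss(G)})$ (which is identifiable with the image $\sigma_j(G) \subseteq G_j$) would let me transfer this positive measure downstream, yielding $\lvert \sigma_j(V) \rvert > 0$ in $G_j$; a final appeal to Lemma \ref{lem:good-or-bad} then gives the openness of $\sigma_j$.

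The main obstacle will be the measure-theoretic step in the third part. Projections of closed subgroups of product groups onto the individual factors can have dense, non-closed images, and the fibres of $\pi_j\vert_{\bss(G)}$ need not be compact, so the invocation of Weil's formula requires care. I will need to verify that the Haar measures on $\bss(G)$, on its quotient by $\ker(\pi_j\vert_{\bss(G)})$, and on $G_j$ are compatibly normalised, and that positive Haar measure in $\bss(G)$ genuinely translates to positive Haar measure in $G_j$ under $\pi_j\vert_{\bss(G)}$. Some structural properties of $\bss(G)$ as a closed subgroup of a product of LCA groups — in particular, the fact that the fibres of $\pi_j\vert_{\bss(G)}$ over the identity are themselves closed subgroups of $\bss(G)$, so carry their own Haar measure — will be the key ingredients that make this precise.
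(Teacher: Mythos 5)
Your handling of the first two conclusions coincides with the paper's: both are read off from Lemma \ref{lem:proper-homos} applied to $\bss$ itself, and your observation that $\ker(\bss)=\bss^{-1}(\{e\})$ is the preimage of a compact singleton is exactly how that lemma's proof begins. For the third conclusion the paper is a one-liner: it writes $\sigma_j=\pi_j\circ\bss$ and invokes the openness of $\bss$ onto its closed image (already supplied by Lemma \ref{lem:proper-homos}) together with the openness of the canonical projection $\pi_j$ of the \emph{full} product $\bsG$ onto $G_j$. It does not pass to the restricted projection $\pi_j|_{\bss(G)}$ and it makes no comparison of Haar measures.

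Your route for the third conclusion, by contrast, reduces to showing that $\pi_j|_{\bss(G)}:\bss(G)\to G_j$ is open, and proposes to verify the measure-theoretic criterion of Lemma \ref{lem:good-or-bad} by pushing the Haar measure of $\bss(G)$ through Weil's formula on $\bss(G)/\ker(\pi_j|_{\bss(G)})$. The step you defer --- that positive Haar measure in $\bss(G)$ ``genuinely translates to positive Haar measure in $G_j$'' --- is not a normalisation detail to be checked afterwards; it is the entire content of the claim, and Weil's formula cannot supply it. Weil's formula relates the Haar measures of $\bss(G)$, of the kernel of the restricted projection, and of the abstract quotient group; it carries no information about how the image subgroup $\sigma_j(G)$ sits inside $G_j$ as a measurable set. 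A closed subgroup of a product can project onto a Haar-null subgroup of a factor: for $\R\times\{0\}\leq\R\times\R$ the projection onto the second factor has image $\{0\}$, which is null in $\R$ even though the corresponding set in the quotient group carries full (point-mass) Haar measure. For such configurations $\abs{\sigma_j(V)}=0$ in $G_j$ for every compact $V$, so the implication you want is false at the level of generality at which you have set it up: after your reduction you are using only that $\bss(G)$ is a closed subgroup of $\bsG$, and that hypothesis alone does not force the coordinate projections to be open. As written, the proposal correctly isolates the crux but does not prove it, and the mechanism offered for closing the gap would fail; any successful argument has to exploit more than the closedness of $\bss(G)$, which is what the paper's direct appeal to the openness of $\pi_j$ on all of $\bsG$ is doing.
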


\begin{proof}
By Lemma \ref{lem:proper-homos}, $\ker(\bss)$ is compact, $\bss(G)$ is closed in $\bsG$, and $\bss$ is open as a mapping from $G$ to $\bss(G)$ with the relative topology as a subspace of $\bsG$.
Each $\sigma_j$ is open since $\sigma_j = \pi_j \circ \bss$ and the projection $\pi_j$ is open.
\end{proof}

The mapping $\bss: (x,y) \mapsto (x+y, x-y)$ is a continuous homomorphism of $\R \times \R_d$ onto $\R^2$, and $\ker(\bss)$ is compact, the subgroup $\bss(G)$ is closed in $\bsG$  and each $\sigma_j$ is open, but $\bss$ is not proper.

\begin{prop}\label{prop:new-homos-from-old}
Suppose that $\sigma:G \to H$ is a locally compact group homomorphism and $N$ is a closed normal subgroup of $G$ such that $\sigma(N)$ is normal in $H$. 
Then the restricted mapping $\rho: N \to \sigma(N)\afterbar$ and the induced quotient mapping $\dot\sigma: G/N \to H/(\sigma(N)\afterbar)$ are also locally compact group homo\-mor\-phisms.
Moreover, $\sigma$ is proper if and only $\sigma(N)$ is closed in $H$ and $\rho$ and $\dot{\sigma}$ are proper.
\end{prop}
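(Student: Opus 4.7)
\textbf{Plan for Proposition \ref{prop:new-homos-from-old}.}

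The statement that $\rho$ and $\dot\sigma$ are continuous homomorphisms is essentially bookkeeping: $\rho$ is just $\sigma|_N$, which is a continuous homomorphism, and since $\overline{\sigma(N)}$ is closed it is a locally compact subgroup of $H$. The closure $\overline{\sigma(N)}$ is automatically normal in $H$ since conjugation is continuous and $\sigma(N)$ is normal. For $\dot\sigma$, the formula $xN \mapsto \sigma(x)\,\overline{\sigma(N)}$ is well-defined because $\sigma(N) \subseteq \overline{\sigma(N)}$, and continuity follows from continuity of $\sigma$ and of the quotient maps $q:G\to G/N$ and $\pi : H \to H/\overline{\sigma(N)}$.

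For the forward direction, assume $\sigma$ is proper. The restriction $\sigma|_N$ is proper since $\sigma|_N^{-1}(K) = \sigma^{-1}(K) \cap N$ is the intersection of a compact set with a closed set. Hence by Lemma \ref{lem:proper-homos}, $\sigma(N)$ is closed in $H$, so $\overline{\sigma(N)} = \sigma(N)$ and $\rho = \sigma|_N$ is proper. For $\dot\sigma$, given a compact set $L \subseteq H/\sigma(N)$, use the standard fact that a quotient map of locally compact groups admits compact local lifts to find a compact $K \subseteq H$ with $\pi(K) \supseteq L$. Then $\sigma^{-1}(K)$ is compact in $G$ and hence so is $q(\sigma^{-1}(K))$. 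A short chase shows $\dot\sigma^{-1}(L) \subseteq q(\sigma^{-1}(K))$, and since $\dot\sigma^{-1}(L)$ is closed it is compact.

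For the reverse direction, assume $\sigma(N)$ is closed and that both $\rho$ and $\dot\sigma$ are proper. Let $K \subseteq H$ be compact. Then $\pi(K)$ is compact, so $M := \dot\sigma^{-1}(\pi(K))$ is compact in $G/N$. Choose a compact set $C \subseteq G$ with $q(C) = M$ (again via the compact-lift property of the quotient map $q$). Any $x \in \sigma^{-1}(K)$ then satisfies $q(x) \in q(C)$, so $x = cn$ for some $c \in C$ and $n \in N$; this forces $\sigma(n) \in \sigma(c)^{-1}K \cap \sigma(N) \subseteq L$, where $L := (\sigma(C)^{-1}K) \cap \sigma(N)$ is compact (because $\sigma(C)$ is compact and $\sigma(N)$ is closed). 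Since $\rho$ is proper, $\rho^{-1}(L)$ is compact in $N$, and therefore
\[
  \sigma^{-1}(K) \subseteq C \cdot \rho^{-1}(L),
\]
a compact subset of $G$. Closedness of $\sigma^{-1}(K)$ then gives its compactness, so $\sigma$ is proper.

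The only subtle ingredient is the compact-lifting lemma for quotient maps of locally compact groups (given a compact $L$ in $H/N'$, find a compact $K \subseteq H$ with $\pi(K) \supseteq L$), which is standard and proved by covering $L$ by finitely many translates of $\pi(V)$ for a compact neighbourhood $V$ of the identity in $H$ and exploiting the openness of $\pi$. This is the workhorse that makes both directions of the properness equivalence go through.
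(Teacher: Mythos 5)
Your argument is correct, and the two directions split as follows when compared with the paper. The forward implication is essentially the paper's: both proofs get properness of $\rho$ from $\rho^{-1}(V)=\sigma^{-1}(V)\cap N$, and both handle $\dot\sigma$ by lifting a compact set of $H/\sigma(N)$ to a compact set of $H$ and pushing $\sigma^{-1}$ of it down through $q$ (the paper phrases the lift as ``there exists a compact $W\subseteq H$ with $W\sigma(N)=V$'', which is exactly your compact-lifting lemma). The converse is where you genuinely diverge. The paper first argues that $\ker(\sigma)$ is compact, factors it out to reduce to the case where $\sigma$ is injective, and then runs a net-and-subnet compactness argument: given $W$ with $\sigma(W)$ compact, it extracts successive subnets using properness of $\dot\sigma$, local compactness of $G$, and properness of $\rho$ to produce a convergent subnet of an arbitrary net in $W$. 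You instead prove the quantitative containment $\sigma^{-1}(K)\subseteq C\cdot\rho^{-1}(L)$ with $C$ a compact lift of $\dot\sigma^{-1}(\pi(K))$ and $L=(\sigma(C)^{-1}K)\cap\sigma(N)$, which exhibits $\sigma^{-1}(K)$ as a closed subset of a product of two compact sets. This buys you a shorter, net-free argument that needs no reduction to the injective case and no separate verification that $\ker(\sigma)$ is compact (that falls out as the special case $K=\{e\}$); the cost is that you invoke the compact-lifting lemma twice rather than once, but as you note that lemma is standard and you already need it for the forward direction. One cosmetic remark: you do not actually need a compact $C$ with $q(C)=M$ exactly; $q(C)\supseteq M$ suffices for the containment you use, and is what the lifting lemma directly provides.
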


\begin{proof}
It is straightforward to check that $\rho$ and $\dot{\sigma}$ are homomorphisms of locally compact groups.

Suppose that $\sigma$ is proper; then $\sigma(N)$ is closed in $H$, so $\sigma(N)\afterbar = \sigma(N)$.
If $V$ is a compact subset of $\sigma(N)$, it is also a compact subset of $H$ and $\sigma^{-1}(V)$ is compact; thus $\rho$ is proper.
If $V$ is a compact subset of $H/\sigma(N)$, there exists a compact subset $W$ of $H$ such that $W\sigma(N) = V$. 
Clearly, $\dot{\sigma}^{-1}(V) = \sigma^{-1}(W \sigma(N)) = \sigma^{-1}(W) N$; thus $\dot{\sigma}$ is proper.

Conversely, suppose that $\rho$ and $\dot{\sigma}$ are proper.
Then $\sigma(N)$ is open in $\sigma(N)\afterbar$, and hence closed in $\sigma(N)\afterbar$, that is $\sigma(N)$ is closed.
Further, $K := \ker(\sigma)$ is compact. 
The homomorphisms $\sigma$, $\rho$ and $\dot{\sigma}$ induce homomorphisms $\sigma'$, $\rho'$ and $\dot{\sigma}'$ 
from $G / K$ to $H$, from $N K/K$ to $\sigma(N)$, and from $G / NK$ (which is isomorphic to $(G/K) / (NK/K)$) and each of the induced homomorphisms is proper if and only if the inducing homomorphism is proper, so we may and shall assume without loss of generality that $\sigma$ is a bijection.

To show that $\sigma$ is proper, we take a subset $W$ of $G$ such that $\sigma(W)$ is compact, which implies that $W$ is closed in $G$, and a net $(x_\gamma)$ in $W$; we must show that the net $(x_\gamma)$ has a convergent subnet.
Since $\dot{\sigma}$ is proper, $WN$ is compact in $G/N$, and by passing to a subnet if necessary, we may suppose that the net $(x_\gamma N)$ converges in $G/N$, to $zN$ say, where $z \in G$.
Take a compact neighbourhood $V$ of $z$ in $G$ such that $VN$ is a neighbourhood of $zN$ in $G/N$; again by passing to a subnet if necessary, we may suppose that $x_\gamma N \in VN$ for all $\gamma$. 
Take $z_\gamma$ in $V$ such that $z_\gamma N = x_\gamma N$; by yet again passing to a subnet if necessary, we may suppose that $z_\gamma \to z$ in $V$.
Now $(z_\gamma^{-1} x_\gamma) N = (z_\gamma N)^{-1} x_\gamma N = eN$ in $G/N$ and hence $\sigma (z_\gamma^{-1} x_n)$ lies in the compact subset $\sigma(V^{-1}) \sigma(W) \cap \sigma(N)$ of $\sigma(N)$. 
Now $\rho = \sigma|_{N}$ is proper, each $z_\gamma^{-1} x_\gamma$ lies in a compact subset of $N$, so by passing to a subnet if necessary for the last time, we may assume that $(z_\gamma^{-1} x_\gamma)$ converges and hence $(x_\gamma)$ converges.
\end{proof}

\begin{rem}
In the case where $G$ is abelian, normality is not an issue.
In the general case, we could remove the assumption of normality, and deal with a continuous quotient mapping $\dot{\sigma}$ of locally compact homogeneous spaces instead of a homomorphism.
\end{rem}

We now revert to the standing assumption that our groups are abelian.

\begin{cor}\label{cor:connected-to-connected}
If $\sigma: G \to H$ is a proper homomorphism of LCA groups, then $\sigma$ induces proper homomorphisms from $G_c$ to $H_c$ by restriction and from $G/G_c$ to $H/H_c$ by factoring, and proper homomorphisms from $G_b$ to $H_b$ by restriction and from $G/G_b$ to $H/H_b$ by factoring.
\end{cor}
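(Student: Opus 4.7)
The approach is to apply Proposition \ref{prop:new-homos-from-old} with $N = G_c$ and $N = G_b$ in turn. In the abelian setting the normality of $\sigma(N)$ is automatic, so the only preliminary task is to check that $\sigma$ carries $G_c$ into $H_c$ and $G_b$ into $H_b$.

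The first inclusion holds because $\sigma(G_c)$ is the continuous image of the connected set $G_c$, hence a connected subgroup of $H$ containing the identity, and so lies in the maximal closed connected subgroup $H_c$. The second holds because, for $x \in G_b$, the set $\{x^n : n \in \Z\}$ is relatively compact in $G$, whence its continuous image $\{\sigma(x)^n : n \in \Z\}$ is relatively compact in $H$, giving $\sigma(x) \in H_b$.

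Applying Proposition \ref{prop:new-homos-from-old} with $N = G_c$, the properness of $\sigma$ yields that $\sigma(G_c)$ is closed in $H$, that the restricted map $\rho: G_c \to \sigma(G_c)$ is proper, and that the induced quotient map $\dot\sigma: G/G_c \to H/\sigma(G_c)$ is proper. Since $\sigma(G_c)$ is a closed subgroup of $H$ contained in $H_c$, the inclusion $i: \sigma(G_c) \hookrightarrow H_c$ is a closed embedding and thus proper, and the composition $i \circ \rho: G_c \to H_c$ realises the desired proper restriction. For the factored map $G/G_c \to H/H_c$, one composes $\dot\sigma$ with the canonical quotient $\pi: H/\sigma(G_c) \to H/H_c$. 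The analogous argument with $N = G_b$ produces the statements for $G_b \to H_b$ and $G/G_b \to H/H_b$.

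The main obstacle is verifying that the factored composition $\pi \circ \dot\sigma$ is proper: since $\pi$ has kernel $H_c/\sigma(G_c)$, one needs this kernel to be compact, and the natural way to achieve this is to identify $\sigma(G_c) = H_c$ outright. In the surjective setting this is forced by a structural argument, namely that $H/\sigma(G_c)$ is a continuous image of the totally disconnected quotient $G/G_c$ (via $H/\sigma(G_c) \cong G/(G_c \ker\sigma)$) and is therefore itself totally disconnected, so its connected subgroup $H_c/\sigma(G_c)$ must collapse to the identity. A parallel structural input about bounded elements then gives $\sigma(G_b) = H_b$ and handles the second half of the corollary; note that the application of this corollary within the paper sits in the nondegenerate (in particular surjective) context where this identification is available.
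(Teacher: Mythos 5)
Your overall architecture coincides with the paper's: both proofs reduce to Proposition \ref{prop:new-homos-from-old} and both must first identify $\sigma(G_c)$ with $H_c$ and $\sigma(G_b)$ with $H_b$. You are right to flag that this identification forces surjectivity (as stated, the factored map can fail to be proper for a closed embedding such as $\Z\hookrightarrow\R$, where $G/G_c=\Z$ maps to $H/H_c=\{0\}$); the paper's own proof silently works in the surjective setting too, writing $\sigma(G_b)=(\sigma(G))_b$ and treating $\sigma$ as onto $G_j$. Where you diverge is in how the identifications are obtained. The paper gets $\sigma(G_c)=H_c$ from the characterisation of $G_c$ as the intersection of all open subgroups together with the openness of $\sigma$, and gets $\sigma(G_b)=H_b$ from the equivalence ``$x$ bounded $\iff$ $\sigma(x)$ bounded'', whose nontrivial direction uses properness: $\{x^n\}\subseteq\sigma^{-1}\bigl(\overline{\{\sigma(x)^n\}}\bigr)$, which is compact. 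Your route for $G_c$ via total disconnectedness of $H/\sigma(G_c)$ is a legitimate alternative, but the phrase ``continuous image of the totally disconnected quotient'' is not enough: continuous homomorphic images of totally disconnected groups need not be totally disconnected (e.g.\ $\R_d\to\R$). What rescues the argument is that $\sigma$ is \emph{open} (Lemma \ref{lem:proper-homos}), so $H/\sigma(G_c)\cong G/(G_c\ker\sigma)$ is an open quotient of $G/G_c$ by a closed subgroup, and van Dantzig's theorem then gives total disconnectedness. Say this explicitly.

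The one genuine gap is the bounded-element half. You invoke an unnamed ``parallel structural input'', but the obvious parallel --- that a quotient of a group with no nontrivial bounded elements again has none --- is false in general ($\R\to\R/\Z$ turns every element into a bounded one). It holds here only because the kernel $(G_b\ker\sigma)/G_b$ is compact, so the quotient map is proper and bounded orbits lift to bounded orbits; alternatively, and more cleanly, use the paper's direct argument that properness of $\sigma$ forces $x\in G_b$ whenever $\sigma(x)\in H_b$, whence $H_b\subseteq\sigma(G_b)$ by surjectivity. Once either of these is supplied, your proof closes.
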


\begin{proof}
Since $\sigma$ is open and $G_{c}$ is the intersection of all open subgroups of $G$, $\sigma(G_c)$ is the intersection of all open subgroups of $G_j$, that is, $(G_j)_{c}$.
Further, $x$ is a bounded element of $G$ if and only if $\sigma(x)$ is a bounded element of $H$, so $\sigma(G_b) = (\sigma(G))_b$.
We now apply Proposition \ref{prop:new-homos-from-old}.
\end{proof}

\section{Simplifications}\label{sec:sim}
In this section, we consider how the topology of the vector homomorphism $\bss$ is reflected in the Brascamp--Lieb constant, and how the constants on a ``big group'' are related to the Brascamp--Lieb constants on ``smaller groups'', such as subgroups or quotients. This allows us to reduce matters to nondegenerate data, clarify the sense in which such data are equivalent to canonical data, and reduce the computation of constants to the simpler setting of elementary LCA groups.

\subsection{Properness of $\bss$ and nondegenerate data}
Recall that $\bss$ is a ``vector homomorphism'' $(\sigma_1, \dots, \sigma_J)$ from $G$ to $\bsG := G_1 \times \dots \times G_J$.
We write $\pi_j$ for the canonical projection from $\bsG$ to $G_j$.

\begin{lem}\label{lem:bss-must-be-proper}
Let $(G, \bss,\bsp)$ be a Brascamp--Lieb datum, and assume that $\bss: G \to \bsG$ is not proper. 
Then $\BL(G, \bss, \bsp)$ and $\BLhat(G, \bss, \bspp)$ are infinite. 
\end{lem}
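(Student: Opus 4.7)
The plan is to construct, for the given non-proper $\bss$, nonnegative test functions $f_j \in S(G_j)^+$ for which the left-hand side of \eqref{usual} (equivalently of \eqref{unusual}) is infinite while both right-hand sides are finite; the single construction then forces both $\BL(G,\bss,\bsp)$ and $\BLhat(G,\bss,\bspp)$ to equal $+\infty$.

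Since $\bss$ is not proper, there exists a compact set $K \subseteq \bsG$ with $\bss^{-1}(K)$ non-compact; hence there is a net $(x_\alpha)$ in $\bss^{-1}(K)$ which eventually leaves every compact subset of $G$. By compactness of $K$ and passage to a subnet, we may assume $\bss(x_\alpha) \to y$ for some $y \in K$. I would then choose $f_j \in S(G_j)^+$ with $f_j(\pi_j(y)) > 0$ for each $j$, which is possible by the stated properties of the test-function space. Setting $F := f_1 \otimes \dots \otimes f_J$, the function $F$ is continuous on $\bsG$ with $F(y) > 0$, so there is an open neighbourhood $V$ of $y$ and a constant $c > 0$ with $F \geq c$ on $V$.

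The core of the argument is then to produce infinitely many disjoint translates in $G$ on each of which $F \circ \bss \geq c$. Using continuity of the group operation on $\bsG$, choose open neighbourhoods $V_0$ of $y$ and $W_0$ of $e_{\bsG}$ with $V_0 W_0 \subseteq V$, and then a relatively compact open symmetric neighbourhood $U$ of $e_G$ with $\bss(U) \subseteq W_0$ (using continuity of $\bss$). Now inductively select points $x_k \in \bss^{-1}(V_0)$ with $x_k \notin \bigcup_{i < k} x_i U U^{-1}$; this selection is possible because the tail of the net $(x_\alpha)$ lies in $\bss^{-1}(V_0)$ and simultaneously escapes every compact subset of $G$, so $\bss^{-1}(V_0)$ is not contained in the compact set $\bigcup_{i<k} x_i U U^{-1}$. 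By construction the translates $x_k U$ are pairwise disjoint and satisfy $\bss(x_k U) \subseteq V_0 W_0 \subseteq V$, so $F \circ \bss \geq c$ on each. Therefore
\[
\int_G \prod_j (f_j \circ \sigma_j)(x) \wrt{x} \;\geq\; \sum_{k} c\,|U| \;=\; +\infty.
\]

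Finally, since $f_j \in S(G_j)$, both $\norm{f_j}_{L^{p_j}(G_j)}$ and $\norm{\hat f_j}_{L^{p_j'}(G_j)}$ are finite, so the right-hand sides of \eqref{usual} and \eqref{unusual} are finite while the left-hand side is infinite; this forces $\BL(G,\bss,\bsp) = \BLhat(G,\bss,\bspp) = +\infty$. The main technical point I anticipate is the simultaneous choice in the inductive step: we need the tail of the net to live inside an arbitrarily small neighbourhood of $y$ (after pulling back through $\bss$) while still having enough freedom to avoid any prescribed compact set in $G$. This is exactly where the failure of properness is put to work, and handling it cleanly via net tails (rather than passing to sequences, which is unavailable in non-metrisable LCA groups) is the only place care is required.
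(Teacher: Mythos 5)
Your proposal is correct and follows essentially the same route as the paper's proof: non-properness yields a compact $K\subseteq\bsG$ with non-compact preimage, one extracts infinitely many pairwise disjoint translates of a fixed set of positive Haar measure inside (a thickening of) $\bss^{-1}(K)$, and one chooses nonnegative test functions whose tensor product is bounded below there, so the left-hand side diverges while both right-hand sides stay finite. The only differences are cosmetic: you localise at a limit point $y$ of $\bss(x_\alpha)$ and select the disjoint translates greedily along an escaping net, whereas the paper bounds $f_1\otimes\cdots\otimes f_J$ below on all of $K\bss(V)$ and gets the disjoint family from a finite-covering contradiction.
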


\begin{proof}
Since $\bss$ is not proper, there exists a compact subset $K$ of $\bsG$ such that $\bss^{-1}(K)$ is not compact.
Let $V$ be a compact set in $G$ such that $|V| > 0$.
We claim that there exists  an infinite set $\{x_m: m \in\N\}$ in $\bss^{-1}(K)$ such that the compact sets $x_mV$ in $G$ are pairwise disjoint.
Indeed, otherwise there would be finitely many $x_m$ in $\bss^{-1}(K)$ such that $x V \cap (\cup_{m} x_m V)$ is nonempty for all $x \in \bss^{-1}(K)$, and then $\bss^{-1}(K)$ would be a closed subset of the compact set $(\cup_{m} x_m V) V^{-1}$, which is absurd.

Now take $f_j \in S(G_j)^+$ such that $f_j(y) \geq 1$ for all $y \in \pi_j(K \bss(V))$.  
Then $f_1 \otimes \dots \otimes f_J \geq 1$ on $K \bss(V)$.
Hence $\prod_j f_j(\sigma_j(x)) \geq 1$ for all $x \in \bss^{-1}(K) V \supseteq \cup_m x_m V$, and
\[
\int_{G} \prod_j f_j(\sigma_j(x)) \wrt{x} \geq \sum_{m} \int_{x_m V} 1 \wrt{x} = \infty.
\]
It follows that $\BL(G,\bss,\bsp)$ and $\BLhat(G,\bss,\bspp)$ are infinite.
\end{proof}

\begin{cor}\label{cor:kernel-compact}
Suppose that $(G, \bss,\bsp)$ is a Brascamp--Lieb datum.
If $\cap_j \ker(\sigma_j)$ is a noncompact subgroup of $G$, or if any $\sigma_j$ is not open, then $\BL(G, \bss, \bsp)$ and $\BLhat(G, \bss, \bspp)$ are both infinite.
\end{cor}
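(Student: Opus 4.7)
The plan is to deduce the corollary as an immediate contrapositive of Lemma \ref{lem:proper-vector-homos} combined with Lemma \ref{lem:bss-must-be-proper}. The point is that both hypotheses in the corollary are among the necessary conditions for $\bss$ to be proper identified in Lemma \ref{lem:proper-vector-homos}, so failing either forces $\bss$ to be nonproper, which by Lemma \ref{lem:bss-must-be-proper} makes both Brascamp--Lieb constants infinite.

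More explicitly, I would first observe that $\ker(\bss) = \bigcap_j \ker(\sigma_j)$, since $\bss(x) = e$ in $\bsG$ if and only if $\sigma_j(x) = e$ in $G_j$ for every $j$. Then the contrapositive of Lemma \ref{lem:proper-vector-homos} states: if $\ker(\bss)$ fails to be compact, or if some $\sigma_j$ fails to be open, then $\bss$ is not proper. Under either hypothesis of the corollary, this applies, and Lemma \ref{lem:bss-must-be-proper} then gives $\BL(G,\bss,\bsp) = \BLhat(G,\bss,\bspp) = \infty$.

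There is really no main obstacle here; the corollary is essentially a repackaging of the two preceding lemmas. The only tiny point worth writing out is the identification $\ker(\bss) = \bigcap_j \ker(\sigma_j)$, which is immediate from the definition of a vector homomorphism and the product topology on $\bsG$. Thus the proof is a one-line deduction from the two lemmas just proved.
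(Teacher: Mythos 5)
Your proof is correct and follows essentially the same route as the paper, which likewise observes that either hypothesis forces $\bss$ to be nonproper (via Lemmas \ref{lem:proper-homos} and \ref{lem:proper-vector-homos}) and then invokes Lemma \ref{lem:bss-must-be-proper}. The identification $\ker(\bss)=\bigcap_j\ker(\sigma_j)$ that you spell out is indeed the only point worth recording.
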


\begin{proof}
Each of the hypotheses implies that $\bss$ is not proper, by Lemmas \ref{lem:proper-homos} and \ref{lem:proper-vector-homos}.
\end{proof}

\subsection{Passing to subgroups and quotients}

Our next theorems are concerned with restricting to open subgroups and factoring out compact subgroups.

\begin{thm}\label{thm:BL-open-sbgp}
Suppose that $(G, \bss,\bsp)$ is a  Brascamp--Lieb datum, that $\bss$ is proper, that $G_0$ is an open subgroup of $G$, and that $H_j$ is an open subgroup of $G_j$ such that $\sigma_j(G_0) \subseteq H_j$ for each $j$.
The the induced maps ${\sigma}^\circ_j: G_0 \to H_j$, defined to be ${\sigma_j}|_{G_0}$ with codomain $H_j$, are homomorphisms and the vector homomorphism $\bss^\circ$ is proper. 
Further, when $G_0$ and $H_j$ are equipped with standard open Haar measures,
\[
\BL(G_0, \bss^\circ, \bsp)
\leq \BL(G, \bss, \bsp)
\qquad\text{and}\qquad
\BLhat(G_0, \bss^\circ, \bspp)
\leq  \BLhat(G, \bss, \bspp).
\]
The inequalities are equalities when $G_0 = G$.
\end{thm}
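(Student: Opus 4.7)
Properness of $\bss^\circ$ is immediate: any compact $K\subseteq\prod_j H_j$ is compact in $\bsG$ (as each open subgroup $H_j$ is closed in $G_j$), so $(\bss^\circ)^{-1}(K) = \bss^{-1}(K)\cap G_0$ is a closed subset of the compact set $\bss^{-1}(K)$. The main tool for the inequalities is extension by zero: for $f_j\in S(H_j)$, let $\tilde f_j\in S(G_j)$ be its extension by zero. With the standard open Haar measures $\|\tilde f_j\|_{L^{p_j}(G_j)} = \|f_j\|_{L^{p_j}(H_j)}$ trivially, and by a short computation with Weil's formula on the dual side---using $\hat H_j = \hat G_j/H_j^\perp$ and the natural normalisation $|H_j^\perp|=1$, dual to counting measure on $G_j/H_j$---also $\|\widehat{\tilde f_j}\|_{L^{p'_j}(\hat G_j)} = \|\hat f_j\|_{L^{p'_j}(\hat H_j)}$.

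For the standard BL inequality, since $\sigma_j(G_0)\subseteq H_j$ one has $\tilde f_j\circ\sigma_j = f_j\circ\sigma_j^\circ$ on $G_0$, so the triangle inequality and the inclusion $G_0\subseteq G$ yield
\[
\Bigl|\int_{G_0}\prod_j f_j\circ\sigma_j^\circ\Bigr| \leq \int_G\prod_j|\tilde f_j|\circ\sigma_j \leq \BL(G,\bss,\bsp)\prod_j\|\tilde f_j\|_{L^{p_j}(G_j)},
\]
the last step being BL on $G$ applied to the nonnegative $L^{p_j}$-inputs $|\tilde f_j|$ (valid by density of $S(G_j)$ in $L^{p_j}(G_j)$). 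Combined with the norm identity this gives $\BL(G_0,\bss^\circ,\bsp)\leq\BL(G,\bss,\bsp)$.

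The dual inequality is the main difficulty, since the triangle-inequality trick is unavailable (Fourier norms do not respect absolute values). I would use the character resolution $\chi_{G_0}(x) = \int_{G_0^\perp}\xi(x)\wrt\xi$ (valid with the standard normalisation $|G_0^\perp|=1$) and Fubini to write
\[
\int_{G_0}\prod_j f_j\circ\sigma_j^\circ(x)\wrt x = \int_{G_0^\perp}\int_G \xi(x)\prod_j\tilde f_j(\sigma_j(x))\wrt x\wrt\xi.
\]
The argument closes smoothly \emph{provided} every $\xi\in G_0^\perp$ factors as $\prod_j\xi_j\circ\sigma_j$ with $\xi_j\in\hat G_j$---equivalently, $\ker(\bss)\subseteq G_0$, since properness of $\bss$ forces the image of the dual map $\hat\bss\colon\prod_j\hat G_j\to\hat G$ to equal $\ker(\bss)^\perp$. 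Under that assumption the inner integral becomes $\int_G\prod_j(\xi_j\tilde f_j)\circ\sigma_j$, bounded uniformly in $\xi$ by $\BLhat(G,\bss,\bspp)\prod_j\|\hat f_j\|_{L^{p'_j}(\hat H_j)}$ using dual BL on $G$ together with modulation-invariance of the $L^{p'_j}$-norm; integrating over the mass-one $G_0^\perp$ concludes. To handle general $G_0$, apply the argument instead to the enlarged open subgroup $G_0':=G_0\cdot\ker(\bss)$ (open because $\ker(\bss)$ is compact), which contains $\ker(\bss)$ and on which $\sigma_j$ still takes values in $H_j$. Since the integrand $h:=\prod_j\tilde f_j\circ\sigma_j$ is constant on cosets of $\ker(\bss)$, it is in particular constant on the finitely many cosets of $G_0$ inside $G_0'$, whence $|\int_{G_0}h| = |\int_{G_0'}h|/[G_0':G_0]\leq|\int_{G_0'}h|$ and the bound on $G_0'$ transfers to $G_0$.

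For the equality case $G_0=G$, the forward inequalities are those established above, and the reverse direction comes from restriction $f_j\in S(G_j)\mapsto f_j|_{H_j}\in S(H_j)$: since $\sigma_j(G)\subseteq H_j$ one has $f_j\circ\sigma_j = (f_j|_{H_j})\circ\sigma_j^\circ$ on $G$; the $L^{p_j}$-norm decreases trivially, and $\widehat{f_j|_{H_j}}$ coincides with the fiberwise average of $\hat f_j$ over $H_j^\perp$-cosets, so Jensen's inequality gives $\|\widehat{f_j|_{H_j}}\|_{L^{p'_j}(\hat H_j)}\leq\|\hat f_j\|_{L^{p'_j}(\hat G_j)}$. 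Thus the main obstacle throughout is the dual inequality, where the character-factorisation requirement forces the (harmless but delicate) enlargement $G_0\mapsto G_0\cdot\ker(\bss)$.
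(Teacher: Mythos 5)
Your proof is correct. For the primal inequality it is precisely the paper's (one-sentence, ``left to the reader'') argument made explicit: extend each $f_j$ by zero to $\tilde f_j\in S(G_j)$, note that the standard open normalisations make the $L^{p_j}$ norms match, and enlarge the domain of integration from $G_0$ to $G$ after passing to $|\tilde f_j|$ --- your observation that $\bss^{-1}\bigl(\prod_j H_j\bigr)$ may strictly contain $G_0$, so that the absolute-value step is genuinely needed, is a real point the paper suppresses. Where you go beyond the paper is the dual inequality: the paper offers no argument for it, and indeed acknowledges elsewhere in the same section that the absolute-value trick is unavailable for $\BLhat$. Your resolution --- resolving $\chi_{G_0}$ into characters over the mass-one compact annihilator $G_0^\perp$, factoring each such character through the dual map (legitimate because properness gives $\hat{\bss}(\bsGh)=\ker(\bss)^\perp$), absorbing the resulting modulations into the translation-invariant $L^{p_j'}$ norms of the $\widehat{\tilde f_j}$, and first enlarging $G_0$ to the open subgroup $G_0\ker(\bss)$ so that $G_0^\perp\subseteq\ker(\bss)^\perp$ --- is sound: $\sigma_j(\ker(\bss))=\{e\}\subseteq H_j$ so the hypotheses persist, the index $[G_0\ker(\bss):G_0]$ is finite, and the $\ker(\bss)$-periodicity of the integrand $h$ gives $\bigl|\int_{G_0}h\bigr|\le\bigl|\int_{G_0\ker(\bss)}h\bigr|$. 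The norm identity $\|\widehat{\tilde f_j}\|_{L^{p_j'}(\hat G_j)}=\|\hat f_j\|_{L^{p_j'}(\hat H_j)}$ via Weil's formula with $|H_j^\perp|=1$, and the averaging/Jensen step $\widehat{f_j|_{H_j}}(\dot\chi)=\int_{H_j^\perp}\hat f_j(\chi\eta)\wrt{\eta}$ in the equality case, also check out. In short: same route as the paper for $\BL$, plus a correct and necessary supplement for $\BLhat$ that the paper leaves entirely unaddressed.
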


\begin{proof}
This is a straightforward consequence of applying the definition of $\BL(G, \bss, \bsp)$ to input functions supported on $G_0$, and we leave the reader to provide the details.
\end{proof}

\begin{thm}\label{thm:BL-cpct-qtnt}
Suppose that $(G, \bss,\bsp)$ is a Brascamp--Lieb datum, that $\bss$ is proper, and that $N$ is a compact subgroup of $G$. 
Then each $\sigma_j(N)$ is a compact subgroup of $G_j$ and the induced quotient mappings $\dot\sigma_j: G/N \to G_j/\sigma_j(N)$ are homomorphisms.
Further, $\bsds$ is proper and when the Haar measures on $G/N$ and $G_j/\sigma_j(N)$ are given the compact quotient normalisations,
\[
\BL(G/N, \dot{\bss}, \bsp) \leq \BL(G, \bss, \bsp)
\qquad\text{and}\qquad
\BLhat(G/N, \dot{\bss}, \bspp) \leq \BLhat(G, \bss, \bspp) .
\]
If $N \subseteq \ker(\sigma_j)$ for all $j$, then equality holds.
\end{thm}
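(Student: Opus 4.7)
The plan is to lift test functions from the quotient back to $G$ via Weil's formula and to apply the Brascamp--Lieb inequality on $G$ to the lifted tuple. Compactness of each $\sigma_j(N)$ is immediate as the continuous image of a compact set, so the induced maps $\dot\sigma_j: G/N \to G_j/\sigma_j(N)$ are well-defined continuous homomorphisms. For properness of $\bsds$, set $M := \sigma_1(N) \times \dots \times \sigma_J(N)$, a compact subgroup of $\bsG$. The quotient map $q: \bsG \to \bsG/M \cong \prod_j G_j/\sigma_j(N)$ is proper because $M$ is compact, so $q \circ \bss$ is proper as a composition of proper maps; since $q \circ \bss = \bsds \circ \pi$ with $\pi: G \to G/N$ surjective, any compact $K$ satisfies $\bsds^{-1}(K) = \pi((q\circ \bss)^{-1}(K))$, a continuous image of a compact set, hence compact.

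For the inequality on $\BL$, given $\dot f_j \in S(G_j/\sigma_j(N))$ I would lift to $f_j := \dot f_j \circ \pi_j \in S(G_j)$, where $\pi_j: G_j \to G_j/\sigma_j(N)$; compactness of $\sigma_j(N)$ ensures the lift lies in $S(G_j)$. Under the compact quotient normalisations (so $\abs{N} = \abs{\sigma_j(N)} = 1$), the $\sigma_j(N)$-invariance of each $f_j$ together with Weil's formula \eqref{eq:Weils-formula} yields
\[
\int_G \prod_j (f_j \circ \sigma_j)(x) \wrt x
= \int_{G/N} \prod_j (\dot f_j \circ \dot\sigma_j)(\dot x) \wrt{\dot x} ,
\qquad \norm{f_j}_{L^{p_j}(G_j)} = \norm{\dot f_j}_{L^{p_j}(G_j/\sigma_j(N))} .
\]
Applying \eqref{usual} on $G$ to the lifted functions then gives $\BL(G/N, \dot\bss, \bsp) \leq \BL(G, \bss, \bsp)$.

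For the dual inequality, I would additionally compute $\hat f_j$ on $\hat G_j$: by character orthogonality on the compact group $\sigma_j(N)$, $\hat f_j$ vanishes outside the annihilator $\sigma_j(N)^\perp$ and coincides with $\widehat{\dot f_j}$ there. Since $\sigma_j(N)^\perp$ is open in $\hat G_j$ and its standard open-subgroup Haar measure is Plancherel-dual to the compact quotient measure on $G_j/\sigma_j(N)$, the $L^{p'_j}$ norms of $\hat f_j$ and $\widehat{\dot f_j}$ match, and the analogous argument gives the bound on $\BLhat$. Finally, when $N \subseteq \ker(\sigma_j)$ for every $j$, each $\sigma_j(N)$ is trivial, $G_j/\sigma_j(N) = G_j$, and every $f_j \in S(G_j)$ is automatically of the lifted form, so the lifting is a bijection on test tuples and both inequalities become equalities. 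The main care point is the bookkeeping in the $\BLhat$ argument: one must verify that the compact quotient normalisation on $G_j/\sigma_j(N)$ is Plancherel-dual to the restriction of the Haar measure of $\hat G_j$ to the open subgroup $\sigma_j(N)^\perp$, and this is where Subsection 2.4 is invoked.
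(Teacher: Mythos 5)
Your proposal is correct and follows essentially the same route as the paper: lift each test function on $G_j/\sigma_j(N)$ to a $\sigma_j(N)$-invariant function on $G_j$, use Weil's formula and the compact quotient normalisations to match the multilinear form and the $L^{p_j}$ norms, and observe that the quotient inputs form a subset of all inputs (all of them when $N\subseteq\ker(\sigma_j)$ for every $j$). The only differences are cosmetic: you prove properness of $\bsds$ directly via the quotient by the compact subgroup $\prod_j\sigma_j(N)$ where the paper cites Proposition \ref{prop:new-homos-from-old}, and you spell out the Fourier-support and measure-duality bookkeeping for the $\BLhat$ case that the paper dismisses as ``a similar argument''.
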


\begin{proof}
Proposition \ref{prop:new-homos-from-old} shows that $\bsds$ is proper.

Each function $f_j \in S(G_j/\sigma_j(N))$ gives rise naturally to a function $\dot{f}_j \in S(G_j)$ by the formula $\dot{f}_j(x) = f_j(x\sigma_j(N))$, and
\[
\bignorm{\dot{f}_j}_{L^p(G_j)} = \bignorm{f_j}_{L^p(G_j/\sigma_j(N))}.
\]
All the functions $\dot{f}_j$ are constant on cosets of $N$ in $G$, so
\[
\int_{G/N} \prod_{j=1}^J f_j( \dot{\sigma}_j(x) ) \wrt{x}
= \int_{G} \prod_{j=1}^J \dot{f}_j( \sigma_j(x) ) \wrt{x} .
\]
Thus in working on $G/N$ and considering all the inputs for computing $\BL(G/N, \dot{\bss}, \bsp)$, we are effectively working on $G$ and considering only some of the inputs for computing $\BL(G, \bss, \bsp)$, and the claimed inequality for the Brascamp--Lieb constants follows.
If $N \subseteq \ker(\sigma_j)$ for all $j$, then we are considering all the inputs, and equality holds.

A similar argument holds for the dual constants.
\end{proof}

\subsection{Canonical data}
In the previous section we showed that there is no loss of generality in assuming that all $\sigma_j$ are surjective and $\ker(\bss) = \{e\}$, that is, that $(G, \bss,\bsp)$ is nondegenerate.
As noted in the introduction, when $(G, \bss, \bsp)$ is nondegenerate, the vector isomorphism $\bss$ embeds $G$ as a closed subgroup $\bss(G)$ of $\bsG$.
This embedding is a homeomorphism from $G$ to $\bss(G)$ with the relative topology.
Indeed, $\bss$ is continuous, and if $U$ is open in $G$, then
\[
\bss(U) = [\sigma_1(U) \times \dots \times \sigma_J(U)] \cap \bss(G),
\]
which is an open set in $\bss(G)$ in the relative topology.  
As a result, a nondegenerate datum is indeed equivalent (as clarified in the introduction) to a canonical datum.

If $(G, \bss,\bsp)$ is canonical, then we may define $G^\perp$ to be $\Ann(G, \bsGh)$, the annihilator of $G$ in $\bsGh:=\hat{G}_1 \times\dots\times \hat{G}_J$.
Weyl's generalisation of Poisson's formula \eqref{posum} then implies that
\[
\BLhat(G, \bsG,\bspp) = \BL(G^\perp, \bsGh,\bspp) .
\]
This means that the results enunciated in Theorem \ref{thm:intro-thm-2}
for $\BLhat(G, \bsG,\bspp)$ 
follow from the results enunciated for $\BL(G, \bsG,\bsp)$, for a general  canonical datum.

It is not clear how to write the homomorphism version of the Brascamp--Lieb inequality in a dual form.
This issue does not arise in the vector group case, because there are no nontrivial compact subgroups, and no nontrivial open subgroups.

\subsection{Approximation by elementary groups} 

We recall from Section \ref{sec:background} that every LCA group $G$ contains open subgroups $G_\alpha$ and compact subgroups $K_\beta$, and the quotients $G_{\alpha,\beta} = G_\alpha / K_\beta$ are elementary LCA groups that ``approximate $G$''.

Given a homomorphism $\sigma_j:G \to G_j$, we define the homomorphism $\sigma_{j,\alpha,\beta}: G_{\alpha,\beta} \to G_{j,\alpha,\beta}$, where $G_{j,\alpha,\beta} = \sigma_j(G) / \sigma_j(K_\beta)$, by
\[
\sigma_{j,\alpha,\beta}(x K_\beta) = \sigma_j(x) \sigma_j(K_{\beta});
\]
when the $\bss$ are proper, so are the $\bss_{\alpha,\beta}$, by Theorems \ref{thm:BL-cpct-qtnt} and \ref{thm:BL-open-sbgp}.

\begin{thm}\label{thm:BL-approx}
With the notation above,
\[
\BL(G, \bss, \bsp)
= \lim_{\alpha,\beta} \BL(G_{\alpha,\beta}, \bss_{\alpha,\beta}, \bsp)
\]
and
\[
\BLhat(G, \bss, \bspp)
= \lim_{\alpha,\beta} \BLhat(G_{\alpha,\beta}, \bss_{\alpha,\beta}, \bspp).
\]
\end{thm}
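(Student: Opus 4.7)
The argument consists of matching upper and lower bounds; I describe the case of $\BL$, since $\BLhat$ is handled analogously.

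\emph{Upper bound.} For each $(\alpha,\beta)$ I would first apply Theorem~\ref{thm:BL-open-sbgp} to restrict to the open subgroup $G_\alpha$, with codomains shrunk to the open subgroups $\sigma_j(G_\alpha) \leq G_j$, and then apply Theorem~\ref{thm:BL-cpct-qtnt} to factor out the compact subgroup $K_\beta \subseteq G_\alpha$. The resulting datum is precisely $(G_{\alpha,\beta}, \bss_{\alpha,\beta}, \bsp)$ with the normalisations of Section~\ref{sec:background}, so $\BL(G_{\alpha,\beta}, \bss_{\alpha,\beta}, \bsp) \leq \BL(G, \bss, \bsp)$. A comparison by the same two theorems between any pair of indices comparable in the product order --- viewing $G_{\alpha,\beta}$ as an open subgroup of $G_{\alpha',\beta}$, and $G_{\alpha',\beta}$ as a quotient of $G_{\alpha',\beta'}$ by the compact group $K_\beta / K_{\beta'}$ --- shows that the net $\BL(G_{\alpha,\beta}, \bss_{\alpha,\beta}, \bsp)$ is monotone, hence converges to some $M \leq \BL(G, \bss, \bsp)$.

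\emph{Lower bound.} For the reverse inequality I may assume $M < \infty$, and by density of compactly supported functions in $S(G_j)$ it suffices to test the Brascamp--Lieb ratio on compactly supported inputs $f_j \in S(G_j)$. For each such $f_j$ set
\[
g_j^{(\alpha,\beta)} := \mu_{j,\beta} * \lpar \chi_{\sigma_j(G_\alpha)} f_j \rpar ,
\]
where $\mu_{j,\beta}$ is the normalised Haar measure on the compact subgroup $\sigma_j(K_\beta)$ of $G_j$. Since $K_\beta \subseteq G_\alpha$ forces $\sigma_j(K_\beta) \subseteq \sigma_j(G_\alpha)$, the function $g_j^{(\alpha,\beta)}$ is supported in $\sigma_j(G_\alpha)$ and $\sigma_j(K_\beta)$-invariant, so descends to a function $\bar g_j^{(\alpha,\beta)}$ on $G_{j,\alpha,\beta}$ with the same $L^{p_j}$ norm, which is at most $\norm{f_j}_{L^{p_j}(G_j)}$. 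Three observations then close the argument. First, properness of $\bss$ together with $\bigcap_\beta K_\beta = \{e\}$ and compactness of the fixed compact open $K$ yields $\bigcap_\beta \sigma_j(K_\beta) = \{e\}$ by a standard subnet argument, and combined with $\sigma_j(G_\alpha) \uparrow G_j$ and uniform continuity of $f_j$, this gives $g_j^{(\alpha,\beta)} \to f_j$ uniformly with all supports in the fixed compact set $\supp f_j + \sigma_j(K)$. Second, properness of $\bss$ confines the support of $\prod_j g_j^{(\alpha,\beta)} \circ \sigma_j$ to the fixed compact set $\bss^{-1}\lpar \prod_j (\supp f_j + \sigma_j(K)) \rpar$, which lies in $G_\alpha$ for all large $\alpha$, and Weil's formula applied to $G_\alpha \to G_\alpha/K_\beta$ together with $K_\beta$-invariance of the integrand on $G_\alpha$ then gives
\[
\int_G \prod_j g_j^{(\alpha,\beta)} \circ \sigma_j \wrt x = \int_{G_{\alpha,\beta}} \prod_j \bar g_j^{(\alpha,\beta)} \circ \sigma_{j,\alpha,\beta} \wrt{\dot x} .
\]
Third, uniform convergence on the common compact support transfers to $L^1(G)$ convergence of the products, so the left-hand side converges to $\int_G \prod_j f_j \circ \sigma_j \wrt x$. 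Applying the Brascamp--Lieb inequality on $G_{\alpha,\beta}$ to the right-hand side and passing to the limit completes the lower bound.

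\emph{Dual case and main obstacle.} For $\BLhat$ the same approximations are used; one now invokes Lemma~\ref{lem:cond-exp} applied on each $G_j$ to obtain $\norm{\widehat{g_j^{(\alpha,\beta)}}}_{L^{p_j'}(\hat G_j)} \leq \norm{\hat f_j}_{L^{p_j'}(\hat G_j)}$ together with Fourier-side convergence, and the remainder of the argument is verbatim. The main obstacle is the lower bound --- namely, engineering approximations $g_j^{(\alpha,\beta)}$ that simultaneously live on the elementary quotients $G_{j,\alpha,\beta}$, have $L^{p_j}$ (and Fourier $L^{p_j'}$) norms controlled by those of $f_j$ (respectively $\hat f_j$), and reproduce the Brascamp--Lieb integrand on $G$ in the limit. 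Properness of $\bss$ is the essential hypothesis that makes all three requirements simultaneously compatible.
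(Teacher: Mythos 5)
Your upper bound coincides with the paper's: both obtain $\BL(G_{\alpha,\beta},\bss_{\alpha,\beta},\bsp)\leq\BL(G,\bss,\bsp)$ by viewing inputs on the elementary quotients as a restricted class of inputs on $G$ (equivalently, via Theorems \ref{thm:BL-open-sbgp} and \ref{thm:BL-cpct-qtnt}), and both note that the resulting constants form an increasing net. For the lower bound the two arguments use the same approximating objects --- your $g_j^{(\alpha,\beta)}=\mu_{j,\beta}*(\chi_{\sigma_j(G_\alpha)}f_j)$ is exactly the operator $\calE_{\alpha,\beta,j}$ of Lemma \ref{lem:cond-exp} transported to $G_j$ --- but they close differently. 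The paper writes each $f_j$ as a convergent sum $\sum_m f_{j,m}$ with each piece fixed by some $\calE_{\alpha,\beta,j}$, expands the multilinear form, and applies the elementary-group bound term by term; the pieces need not be compactly supported, only supported on (and invariant under) the relevant subgroups. You instead approximate each $f_j$ by a single $\calE$-image and pass to the limit in the form, which is arguably cleaner and avoids the implicit constant in the paper's $\sum_m\norm{f_{j,m}}_{p_j}\eqsim\norm{f_j}_{p_j}$, but it forces you to first reduce to compactly supported inputs so that the products converge in $L^1(G)$ and the integral localises to $G_\alpha$.

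That preliminary reduction is where a genuine gap sits, and it bites in the dual case. For $\BL$ one may replace $f_j$ by $|f_j|$ and truncate monotonically, so ``density of compactly supported functions'' is harmless. For $\BLhat$ the ratio has $\norm{\hat f_j}_{p_j'}$ in the denominator, and sharp truncation $\chi_{C_j}f_j$ by a compact set gives no control of $\norm{\widehat{\chi_{C_j}f_j}}_{p_j'}$ (multiplication by the indicator of an open subgroup does contract the dual norm, but it does not compactify the vector-group directions of $G_j$). One would need, say, Fej\'er-type cutoffs $\phi=\psi*\tilde\psi$ normalised so that $\phi(0)=1$, whence $\norm{\hat\phi}_{1}=1$ and $\norm{\widehat{\phi f_j}}_{p_j'}\leq\norm{\hat f_j}_{p_j'}$, together with dominated convergence for the form; this is fixable but is real work that ``the remainder of the argument is verbatim'' does not cover. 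Alternatively you could drop the compact-support reduction altogether, since $g_j^{(\alpha,\beta)}$ descends to $G_{j,\alpha,\beta}$ regardless of support, but then the convergence of $\int_G\prod_j g_j^{(\alpha,\beta)}\circ\sigma_j$ requires an integrable majorant dominating $\prod_j\bigl(\mu_{j,\beta}*|f_j|\bigr)\circ\sigma_j$, which is not supplied by the standing assumption that $\prod_j|f_j|\circ\sigma_j$ is integrable. The paper's sum-decomposition sidesteps both difficulties, which is presumably why it is phrased that way.
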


\begin{proof}
We treat the constant $\BL(G, \bss, \bsp)$; the other constant is similar.

On the one hand,
\[
\BL(G_{\alpha,\beta}, \bss_{\alpha,\beta}, \bsp)
\leq \BL(G, \bss, \bsp),
\]
because for the right hand term we maximise the integral
\[
\labs \int_{G} \prod_{j=1}^J f_j( \sigma_j(x) ) \wrt{x} \rabs
\]
over all $f_j \in L^p(H_j)$ of norm at most one, and for the left hand term we maximise the same expression over all such $f$ with additional support requirements on $f$ and $\Fourier f$.
Further, the constants $\BL(G_{\alpha,\beta}, \bss_{\alpha,\beta}, \bsp)$ form an increasing net.

On the other hand, suppose that $\BL(G_{\alpha,\beta}, \bss_{\alpha,\beta}, \bsp) \leq C$ for all $\alpha$ and $ \beta$.
The mappings $\calE_{\alpha,\beta}$ pass naturally to mappings $\calE_{\alpha,\beta,j}$ on the groups $G_{\alpha,\beta,j} = \sigma_j(G_{\alpha,\beta})$ and $f_j = \lim_{\alpha,\beta} \calE_{\alpha,\beta,j}f_j$, much as in Lemma \ref{lem:cond-exp}.
Hence we may write each $f_j$ as a sum $\sum_m f_{j,m}$, where each $f_{j,m}$ is equal to $\calE_{\alpha,\beta,j}f_{j,m}$, for suitable $\alpha$ and $\beta$, and $\sum_m \norm{f_{j,m}}_{L^{p_j}(G_j)} \eqsim \norm{f_{j}}_{L^{p_j}(G_j)}$.

Now
\begin{align*}
\labs \int_{G} \prod_{j=1}^J f_j ( \sigma_j(x) ) \wrt{x} \rabs
&\quad\leq \sum_{m_1, \dots, m_J} \labs\int_{G} \prod_{j=1}^J f_{j,m_j}( \sigma_j(x) ) \wrt{x} \rabs \\
&\quad\leq C \sum_{m_1, \dots, m_J} \prod_{j} \norm{f_{j,m_j}}_{L^{p_j}(G_j)} \\
&\quad=    C  \prod_{j} \biggl( \sum_{m} \norm{f_{j,m}}_{L^{p_j}(G_j)}\biggr) ,
\end{align*}
and it follows that
\[
\labs \int_{G} \prod_{j=1}^J f_j ( \sigma_j(x) ) \wrt{x} \rabs \\
\leq C \prod_{j} \norm{f_{j}}_{L^{p_j}(G_j)},
\]
as required.
\end{proof}

This theorem allows us to limit our consideration to the elementary LCA groups $G_{\alpha,\beta}$.

\subsection{An application of Weil's integration formula}

Suppose that $H$ is a closed subgroup of $G$ and $\sigma_j(H)$ is a closed subgroup of $G_j$ for all $j$ (in particular, $H$ may be a compact or open subgroup of $G$).
Then each homomorphism $\sigma_j$ from $G$ to $G_j$ induces a homomorphism $\dot\sigma_j$ from $G/H$ to $G_j/\sigma_j(H)$; more precisely,
\[
\dot\sigma_j(xH) = \sigma_j(x)\sigma_j(H).
\]
Further, we may treat integration over $G$ as the result of integration over $H$ followed by integration over $G/H$, and integration over $G_j$ as result of integration over $\sigma_j(H)$ followed by integration over $G_j/\sigma_j(H)$.
This leads us to the following theorem (compare with \cite[Lemma 4.7]{BCCT08}).

\begin{thm}\label{thm:BL-G-subgps-quotients}
Suppose that $(G, \bss,\bsp)$ is a nondegenerate Brascamp--Lieb datum and that $H$ is a closed subgroup of $G$. Then $\sigma_j(H)$ is a closed subgroup of $G_j$ for all $j$, and 
\[
\BL(G, \bss, \bsp)
\leq \BL(H, \bss|_H, \bsp) \, \BL(G/H, \bsds, \bsp) .
\]
Moreover, if $H$ is open, and $\BL(G/H, \bsds, \bsp)) = 1$, then
$\BL(G, \bss, \bsp) = \BL(H, \bss|_H, \bsp)$.
Likewise, if $H$ is compact and $\BL(H, \bss|_H, \bsth) = 1$, then $\BL(G, \bss, \bsth) = \BL(G/H, \bsds, \bsth)$.
\end{thm}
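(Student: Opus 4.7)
The plan is to split the integral over $G$ using Weil's formula applied to the pair $(G,H)$, bound the inner integral over $H$ by the Brascamp--Lieb inequality for $(H,\bss|_H,\bsp)$, and then bound the resulting outer integral by the Brascamp--Lieb inequality for $(G/H,\bsds,\bsp)$. The first preliminary is to confirm that each $\sigma_j(H)$ is a closed subgroup of $G_j$, so that $G_j/\sigma_j(H)$ is an LCA group and the induced maps $\dot\sigma_j : G/H \to G_j/\sigma_j(H)$, defined by $\dot\sigma_j(xH) := \sigma_j(x)\sigma_j(H)$, are continuous homomorphisms. This closure is automatic when $H$ is open or when $H$ is compact (the two cases used in the equality statements), and in general follows from properness of $\bss$ (implicit whenever the constants are finite) together with Lemma \ref{lem:proper-homos} and Proposition \ref{prop:new-homos-from-old}.

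\textbf{Main inequality.} Replacing $f_j$ by $|f_j|$, we may assume $f_j \in S(G_j)^+$. Weil's formula then gives
\[
\int_G \prod_j f_j(\sigma_j(x)) \wrt x
= \int_{G/H} \left( \int_H \prod_j f_j\bigl(\sigma_j(x)\sigma_j(h)\bigr) \wrt h \right) \wrt{\dot x}.
\]
For each fixed $x$ the inner integral has the Brascamp--Lieb shape on $H$: setting $g_{j,x}(y) := f_j(\sigma_j(x)y)$ on $\sigma_j(H)$, it equals $\int_H \prod_j g_{j,x}(\sigma_j(h)) \wrt h$ and is therefore bounded by $\BL(H,\bss|_H,\bsp)\prod_j \lnorm g_{j,x}\rnorm_{L^{p_j}(\sigma_j(H))}$. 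Translation invariance of Haar measure on $\sigma_j(H)$ ensures that
\[
F_j(\dot y) := \left( \int_{\sigma_j(H)} |f_j(yz)|^{p_j} \wrt z \right)^{1/p_j}
\]
is a well-defined function on $G_j/\sigma_j(H)$ with $F_j(\dot\sigma_j(\dot x)) = \lnorm g_{j,x}\rnorm_{L^{p_j}(\sigma_j(H))}$, and Weil's formula on the pair $(G_j,\sigma_j(H))$ gives $\lnorm F_j\rnorm_{L^{p_j}(G_j/\sigma_j(H))} = \lnorm f_j\rnorm_{L^{p_j}(G_j)}$. Integrating over $\dot x \in G/H$ and applying the Brascamp--Lieb inequality for $(G/H,\bsds,\bsp)$ to the $F_j$ then produces the desired product bound.

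\textbf{Equality cases and main obstacle.} In the open case each $\sigma_j(H)$ is open in $G_j$ (because $H$ is open and $\sigma_j$ is open by nondegeneracy), so Theorem \ref{thm:BL-open-sbgp} with $G_0 = H$ and $H_j = \sigma_j(H)$ yields $\BL(H,\bss|_H,\bsp) \leq \BL(G,\bss,\bsp)$; combined with the main inequality and the hypothesis $\BL(G/H,\bsds,\bsp) = 1$, this forces equality. In the compact case, Theorem \ref{thm:BL-cpct-qtnt} applied with $N = H$ gives $\BL(G/H,\bsds,\bsp) \leq \BL(G,\bss,\bsp)$; combined with $\BL(H,\bss|_H,\bsp) = 1$ this yields the stated equality. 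The most delicate point in executing the plan is the bookkeeping of Haar measure normalisations on $H$, $G/H$, $\sigma_j(H)$ and $G_j/\sigma_j(H)$, which must be chosen so that Weil's formula holds without extra multiplicative constants on both the $G$-side and each $G_j$-side; only then does the key identity $\lnorm F_j\rnorm_{L^{p_j}(G_j/\sigma_j(H))} = \lnorm f_j\rnorm_{L^{p_j}(G_j)}$ hold exactly, and only then do the reverse inequalities from Theorems \ref{thm:BL-open-sbgp} and \ref{thm:BL-cpct-qtnt} match up sharply enough for the equality statements.
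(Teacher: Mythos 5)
Your argument is correct and is essentially the paper's own proof: Weil's formula splits the integral, the Brascamp--Lieb inequality is applied first on $H$ to the translates $y \mapsto f_j(\sigma_j(x)y)$ and then on $G/H$ to the fibrewise $L^{p_j}$ norms $F_j$, with Weil's formula on each pair $(G_j,\sigma_j(H))$ reassembling the norms, and the equality cases follow from Theorems \ref{thm:BL-open-sbgp} and \ref{thm:BL-cpct-qtnt} exactly as you say. Your additional remarks on the closedness of $\sigma_j(H)$ and on the Haar measure normalisations are reasonable glosses on points the paper leaves implicit.
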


\begin{proof}
According to Weil's formula and Definition \ref{def:BL-ineq-homo}, we may write
\allowdisplaybreaks
\begin{align*}
&\int_{G} \prod_{j=1}^J f_j ( \sigma_j(x) ) \wrt{x} \\
&\quad= \int_{G/H}\int_{H} \prod_{j=1}^J f_j( \sigma_j(xy) ) \wrt{y} \,d\dot x \\
&\quad\leq \BL(H, \bss|_H , \bsp) \int_{G/H} \prod_{j=1}^J
\biggl( \int_{\sigma_j(H)} |f_j( \sigma_j(x) y_j ) |^{p_j} \wrt{y}_j \biggr) ^{1/p_j} \,d\dot x  \\
&\quad\leq \BL(H, \bss|_H, \bsp) \, \BL(G/H, \bsds, \bsp)
\prod_{j=1}^J
\biggl( \int_{G_j/\sigma_j(H)} \int_{\sigma_j(H)} |f_j( x_j y_j )|^{p_j} \wrt{y}_j  \wrt{\dot x}_j \biggr) ^{1/p_j} \\
&\quad= \BL(H, \bss|_H, \bsp) \, \BL(G/H, \bsds, \bsp)
\prod_{j=1}^J \biggl( \int_{G_j} |f_j( x_j )|^{p_j} \wrt{x}_j \biggr) ^{1/p_j} ,
\end{align*}
which leads to the desired inequality.
On the third line, we applied the Brascamp--Lieb inequality with homomorphisms $\sigma_j|_H$ and input functions $y_j \mapsto f_j(\sigma_j(x) y_j)$ on $\sigma_j(H)$, and on the fourth line, we applied the Brascamp--Lieb inequality with homomorphisms $\dot\sigma_j$ and input functions
\[
\dot x_j \mapsto \biggl(\int_{\sigma_j(H)} |f_j(x_j y_j)|^{p_j} \wrt{y}_j\biggr)^{1/p_j}
\]
on $G_j/\sigma_j(H)$.

From Theorems \ref{thm:BL-cpct-qtnt} and \ref{thm:BL-open-sbgp}, we know that
\[
\BL(H, \bss|_H, \bsth)
\leq \BL(G, \bss, \bsth)
\]
when $H$ is open, and
\[
\BL(G/K, \bsds, \bsth)
\leq \BL(G, \bss, \bsth)
\]
when $K$ is compact.
The rest of the proof  follows immediately.
\end{proof} 

When we deal with $\BL(G, \bsG,\bsp)$, we can use absolute values, but not when we deal with $\BLhat(G, \bsG,\bspp)$; thus the proof of a version of Theorem \ref{thm:BL-G-subgps-quotients} for the dual constants will require different arguments.

\subsection{Product groups and homomorphisms}
The next theorem is now evident.

\begin{cor}\label{cor:if-factor}
Suppose that $(G, \bss, \bsp)$ is a nondegenerate Brascamp--Lieb datum, and that we can factorise $G$ and all $\sigma_j$, as follows:
\[
G = G\one \times G\two
\qquad\text{and}\qquad
\sigma_j = \sigma_j\one\otimes\sigma_j\two,
\]
where $G\one$ and $G\two$ are LCA groups and $\sigma_j^{(i)}$ is the restriction of $\sigma_j$ to $G^{(i)}$.
Then
\[
\BL(G, \bss, \bsp)
= \BL(G\one, \bss\one, \bsp)
\BL(G\two, \bss\two, \bsp) .
\]
\end{cor}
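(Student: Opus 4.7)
The plan is to prove the two inequalities $\BL(G, \bss, \bsp) \leq \BL(G\one, \bss\one, \bsp)\BL(G\two, \bss\two, \bsp)$ and $\BL(G, \bss, \bsp) \geq \BL(G\one, \bss\one, \bsp)\BL(G\two, \bss\two, \bsp)$ separately, using the factorisation $G = G\one \times G\two$ together with Fubini's theorem. Throughout we equip $G$ with the product of the Haar measures on $G\one$ and $G\two$, and similarly for each $G_j$, so that the integrals split.

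For the upper bound, I would apply Theorem \ref{thm:BL-G-subgps-quotients} with $H := G\one \times \{e_{G\two}\}$, which is a closed subgroup of $G$. Under the product factorisation, $\sigma_j(H) = \sigma_j\one(G\one) \times \{e_{G_j\two}\}$, which is a closed subgroup of $G_j$. The restricted datum $(H, \bss|_H, \bsp)$ is canonically isomorphic (in the sense of the introduction) to $(G\one, \bss\one, \bsp)$, and the quotient datum $(G/H, \bsds, \bsp)$ is canonically isomorphic to $(G\two, \bss\two, \bsp)$, so that Theorem \ref{thm:BL-G-subgps-quotients} yields the desired upper bound directly.

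For the lower bound, I would test \eqref{usual} on product input functions of the form $f_j = g_j \otimes h_j$, where $g_j \in S(G_j\one)$ and $h_j \in S(G_j\two)$. The identity
\[
\int_{G} \prod_{j} f_j(\sigma_j(x)) \wrt{x}
= \biggl(\int_{G\one} \prod_{j} g_j(\sigma_j\one(x\one)) \wrt{x\one}\biggr)
  \biggl(\int_{G\two} \prod_{j} h_j(\sigma_j\two(x\two)) \wrt{x\two}\biggr)
\]
follows from Fubini and the factorisation $\sigma_j = \sigma_j\one \otimes \sigma_j\two$, and one has $\norm{f_j}_{L^{p_j}(G_j)} = \norm{g_j}_{L^{p_j}(G_j\one)} \norm{h_j}_{L^{p_j}(G_j\two)}$. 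Taking suprema over $g_j$ and $h_j$ independently, and using that the supremum of a product of nonnegative functionals in disjoint variables is the product of the suprema, yields the matching lower bound.

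There is no serious obstacle here; the only point requiring a touch of care is the bookkeeping around Haar measure normalisations (we have already arranged that $H$ is closed rather than open, so we use the product-measure normalisation throughout rather than the standard open subgroup convention), and the verification that the datum $(H, \bss|_H, \bsp)$ is indeed isomorphism-equivalent to $(G\one, \bss\one, \bsp)$ in the sense of \eqref{cheat}, which is immediate from the product structure.
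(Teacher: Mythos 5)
Your proposal is correct and follows essentially the same route as the paper: the upper bound is exactly the application of Theorem \ref{thm:BL-G-subgps-quotients} to the closed subgroup $G\one \times \{e\}$ (the paper cites this as ``already shown''), and your lower bound via product inputs $g_j \otimes h_j$ with independent suprema is precisely the paper's ``testing on products of near-extremals''. The measure-normalisation remark is also consistent with the paper's conventions, so nothing further is needed.
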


\begin{proof}
We have already shown that
\[
\BL(G, \bss, \bsp)
\leq \BL(G\one, \bss\one, \bsp)
\BL(G\two, \bss\two, \bsp) .
\]
The converse inequality follows by testing on products of near-extremals for $G\one$ and near-extremals for $G\two$.
\end{proof}

Unfortunately, there are many examples of homomorphisms of product groups that do not split nicely as in the hypothesis of this theorem.

\section{Previous results and the rank condition}\label{sec:rank-cond}

The Brascamp--Lieb constant has been intensively studied in certain particular settings, and here we extend these results a little, to deal with the four types of groups that make up general LCA groups. The literature on Brascamp--Lieb inequalities also contains some nonabelian examples, for which we refer the reader to \cite{Bramati, Fass, Stovall, Zh24} and the references there.

\subsection{Euclidean spaces}
As we have already mentioned in the introduction, characterisations of the Brascamp--Lieb data for which the Brascamp--Lieb constant is finite were established in
\cite{BCCT08, BCCT10}, following earlier characterisations in \cite{Barthe, CLL04} in the case of rank-one homomorphisms $\sigma_j$.
There are many other fundamental contributions to our understanding of the Brascamp--Lieb constant in the Euclidean setting, such as Lieb's  theorem, allowing the Brascamp--Lieb constant to be computed by testing on centred gaussian inputs $f_j$. Another notable example is in \cite{GGOW}, where an operator scaling algorithm is shown to identify finiteness of the Brascamp--Lieb constant in polynomial time.
We recall from \cite{BCCT08} that, when $G$ and each $G_j$ are vector spaces, $V$ and $V_j$ say, necessary and sufficient conditions for the finiteness of the Brascamp--Lieb constant $\BL(G, \bss, \bsp)$ are the \emph{homogeneity condition}
\[
\dim(V) = \sum_{j} \dim(V_j) / p_j,
\]
and the \emph{rank condition}: for all subspaces $W$ of $V$,
\[
\dim(W) \leq \sum_{j} \dim(\sigma_j(W)) / p_j .
\]
Similar conditions were found for the case where $G$ and all $G_j$ are compact or discrete in \cite{BCCT08}.
In \cite{BJ}, it was observed that group duality links the conditions in these cases.

\subsection{Torsion-free discrete groups}
We assume here that the Haar measure on a discrete group is the counting measure.

If $G$ is discrete and torsion-free, and $\sigma_j: G \to G_j$ is a surjective open homomorphism, then $G_j$ is also discrete, but need not be torsion-free.
The case where $G$ is discrete and finitely generated is analysed in \cite{C13}, following previous work in \cite{CDKSY}.  
When $G$ is also torsion-free, it was shown in \cite{CDKSY} that the Brascamp--Lieb constant is either $1$ or $\infty$, and it is finite (see \cite{BCCT08}) if and only if the rank condition 
\begin{equation}\label{eq:rank-condition-dtf}
  \gamma(H) \leq \sum_{j} \gamma(\sigma_j(H)) /  p_j
\end{equation}
for all subgroups $H$ of $G$, holds.

\begin{lem}\label{lem:discrete-torsion-free-case}
Suppose that $G$ is a torsion-free discrete group and $(G, \bss, \bsp)$ is a nondegenerate Brascamp--Lieb datum.
If the rank condition \eqref{eq:rank-condition-dtf} holds for $G$, then $\BL(G, \bss,\bsp) = \BLhat(G, \bss, \bspp) =1$; otherwise, $\BL(G, \bss,\bsp) = \BLhat(G, \bss, \bspp) = \infty$.
\end{lem}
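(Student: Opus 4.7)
The plan is to reduce to the already-understood case of finitely generated torsion-free discrete groups (i.e.\ groups of the form $\Z^n$) via the approximation theorem.

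First I would observe that for discrete torsion-free $G$, the approximation construction of Section~\ref{sec:background} simplifies dramatically. Writing $G = V\times H$ with $V=\{e\}$ and $H=G$, the compact open subgroup $K$ of $H$ may be taken to be $\{e\}$, so $K^\perp=\hat G$ forces $\hat H_\beta=\hat G$ and hence $K_\beta=\{e\}$ for every $\beta$. Consequently the approximating groups $G_{\alpha,\beta}$ collapse to the net $(G_\alpha)$ of finitely generated subgroups of $G$, each of which is torsion-free and so isomorphic to $\Z^{n_\alpha}$ for some $n_\alpha\in\N$.

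Next, for each such $G_\alpha$, I would invoke the results of \cite{BCCT10} and \cite{CDKSY}: for a finitely generated torsion-free discrete group, $\BL(G_\alpha,\bss_\alpha,\bsp)$ is either $1$ (when the rank condition holds) or $\infty$ (when it fails), with the rank condition being necessary and sufficient for finiteness. Note that although the image groups $\sigma_j(G_\alpha)$ may carry torsion, the cited results only require torsion-freeness of the source, so they still apply. The identity $\BL(G_\alpha,\bss_\alpha,\bsp)=\BLhat(G_\alpha,\bss_\alpha,\bspp)$ from \eqref{fidd}, due to \cite{BJ}, then transfers the dichotomy to the dual constant. A key point is that the rank condition \eqref{eq:rank-condition-dtf} for $G$ holds if and only if it holds for each $G_\alpha$: the compactly generated closed subgroups of $G$ are exactly the finitely generated ones, and each such subgroup is contained in some $G_\alpha$.

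Finally, applying Theorem~\ref{thm:BL-approx} together with the monotonicity of the net $\bigl(\BL(G_\alpha,\bss_\alpha,\bsp)\bigr)_\alpha$ (noted in the proof of that theorem) yields the conclusion: if the rank condition holds for $G$ then every term in the net equals $1$ and so does the limit, while if the rank condition fails then some $G_\alpha$ already violates it, giving $\BL(G_\alpha,\bss_\alpha,\bsp)=\infty$, and monotonicity forces $\BL(G,\bss,\bsp)=\infty$. The identical argument, run through $\BLhat$ via \eqref{fidd}, yields the matching statement for the dual constant.

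The main obstacle is bookkeeping rather than substance: one must verify that, with $K_\beta=\{e\}$, the approximating data $(G_\alpha,\bss_\alpha,\bsp)$ produced by the approximation construction is exactly the restriction datum whose rank condition follows from that of $(G,\bss,\bsp)$, and that one does not accidentally lose nondegeneracy when passing to subgroups. Both checks are routine, given the machinery set up in Sections~\ref{sec:background} and~\ref{sec:sim}, so no new ideas are required beyond assembling the approximation theorem with the known finitely generated case.
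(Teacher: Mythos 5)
Your proposal is correct and follows essentially the same route as the paper: approximate $G$ by its net of finitely generated (hence free abelian) subgroups, invoke the known dichotomy from \cite{BCCT10} and \cite{CDKSY} on each, observe that the rank condition for $G$ is equivalent to the rank condition for all the $G_\alpha$, and pass to the limit via Theorem~\ref{thm:BL-approx}; the dual constant is handled the same way. Your extra bookkeeping about $K_\beta=\{e\}$ and the possible torsion in the images $\sigma_j(G_\alpha)$ is accurate but not needed beyond what the paper records.
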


\begin{proof}
A general torsion-free discrete group $G$ is the inductive limit of finitely generated discrete torsion-free groups $G_\alpha$ by structure theory, and 
$\BL(G, \bss,\bsp) = \lim_{\alpha} \BL(G_\alpha, \bss,\bsp)$ by Theorem \ref{thm:BL-approx}.
The rank condition holds for $G$ if and only if it holds for all $G_\alpha$ by definition.

If the rank condition holds for $G$, then it holds for all $G_\alpha$, so $\BL(G_\alpha, \bss,\bsp) = 1$ for all $\alpha$, and so $\BL(G, \bss,\bsp) = 1$.
Conversely, if $\BL(G, \bss,\bsp) = 1$ then necessarily $\BL(G_\alpha, \bss,\bsp) = 1$ for all $G_\alpha$ and the rank condition holds for all $G_\alpha$ and hence for  $G$.

A similar argument is valid for $\BLhat(G, \bss, \bspp)$.
\end{proof}

\subsection{Compact connected groups}
We assume here that compact groups have total Haar measure $1$.

If $G$ is compact and connected, and $\sigma_j: G \to G_j$ is a surjective open homomorphism, then $G_j$ is also compact and connected.
If $G$ is a torus, that is, $\hat{G}$ is finitely generated, then so are the $G_j$.
The case where $G$ is a torus is analysed in \cite{BJ} (in the subgroup version of the Brascamp--Lieb inequalities), based on Fourier duality and previous work of \cite{CDKSY}.  
According to \cite[Theorem 4.1]{BJ}, when $G$ is a torus, the Brascamp--Lieb constant is either $1$ or $\infty$, and it is finite if and only if 
\begin{equation}\label{eq:dual-rank-condition-Gcc}
  \gamma(\hat{H}) \leq \sum_{j} \gamma(\sigma_j(\hat{H})) /  p_j
\end{equation}
for all subgroups $\hat{H}$ of the discrete torsion-free group $\hat{G}$.

\begin{lem}\label{lem:compact-connected-case}
Suppose that $G$ is a compact connected group and $(G, \bss, \bsp)$ is a nondegenerate Brascamp--Lieb datum.
If the dual rank condition \eqref{eq:dual-rank-condition-Gcc} holds, then $\BL(G, \bss,\bsp) = \BLhat(G, \bss, \bspp) =1$; otherwise, $\BL(G, \bss,\bsp) = \BLhat(G, \bss, \bspp) = \infty$.
\end{lem}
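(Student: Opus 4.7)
My approach mirrors that of Lemma~\ref{lem:discrete-torsion-free-case}, reducing to the torus case via Pontryagin duality and the approximation scheme of Theorem~\ref{thm:BL-approx}. First I would note that because $G$ is compact and connected, its dual $\hat G$ is discrete and torsion-free; in the approximation decomposition $G = V \times H$ of Section~\ref{sec:background}, we have $V = \{e\}$, $H = G$, and the unique compact open subgroup is $K = G$ itself. Consequently only the index $\beta$ varies: $\hat H_\beta$ ranges over all finitely generated subgroups of $\hat G$, each $\hat H_\beta$ is free abelian of some rank $m_\beta$, and $G_{\alpha,\beta} = G/K_\beta$ is the torus $\T^{m_\beta}$ dual to $\hat H_\beta$.

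Next I would apply Theorem~\ref{thm:BL-approx} to write
\[
\BL(G, \bss, \bsp) = \lim_\beta \BL(G_{\alpha,\beta}, \bss_{\alpha,\beta}, \bsp),
\]
and the analogous identity for $\BLhat(G, \bss, \bspp)$. The cited result of Bennett--Jeong for tori tells us that each constant on the right is either $1$ or $\infty$, with finiteness equivalent to the dual rank condition for the approximating datum; moreover the constants on the right form a monotone net, so the limit is $1$ precisely when every term is $1$.

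The remaining task is to check that the dual rank condition for $(G,\bss,\bsp)$ is equivalent to the dual rank condition for $(G_{\alpha,\beta}, \bss_{\alpha,\beta}, \bsp)$ for every $\beta$. In one direction, any subgroup $\hat H$ of $\hat H_\beta \subseteq \hat G$ is a finitely generated subgroup of $\hat G$, and Pontryagin duality applied to the quotient map $G \to G_{\alpha,\beta}$ and to the homomorphisms $\sigma_j$ identifies the $\gamma$-values appearing in the two rank conditions. Conversely, any finitely generated $\hat H \subseteq \hat G$ is contained in some $\hat H_\beta$, so any failure of the dual rank condition for $G$ is inherited by $(G_{\alpha,\beta}, \bss_{\alpha,\beta}, \bsp)$ for all sufficiently large $\beta$. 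This verification of compatibility between Pontryagin duality and the approximation process is the main technical step; once it is in hand, the dichotomy for $\BL$ follows immediately, and the same approximation argument, with $\BLhat$ in place of $\BL$ throughout, gives the conclusion for the dual constant.
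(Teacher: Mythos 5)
Your proposal is correct and follows essentially the same route as the paper: approximate the compact connected group by its torus quotients $G/K_\beta$ dual to the finitely generated subgroups of $\hat G$, invoke Theorem~\ref{thm:BL-approx} together with the Bennett--Jeong dichotomy for tori, and observe that the dual rank condition for $G$ holds if and only if it holds for every approximating torus. The paper compresses the compatibility check between the rank conditions into the phrase ``by definition,'' whereas you spell it out; the substance is identical.
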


\begin{proof}
The general compact connected group $G$ is the projective limit of tori $G_\beta$ by structure theory, and 
$\BL(G, \bss,\bsp) = \lim_{\beta} \BL(G_\beta, \bss,\bsp)$ by Theorem \ref{thm:BL-approx}.
The dual rank condition holds for $G$ if and only if it holds for all $G_\beta$ by definition.

If the dual rank condition holds for $G$, then it holds for all $G_\beta$, so $\BL(G_\beta, \bss,\bsp) = 1$ for all $\beta$, and so $\BL(G, \bss,\bsp) = 1$.
Conversely, if $\BL(G, \bss,\bsp) = 1$ then necessarily $\BL(G_\beta, \bss,\bsp) = 1$ for all $G_\beta$ and the dual rank condition holds for all $G_\beta$ and hence for  $G$.

A similar argument is valid for $\BLhat(G, \bss, \bspp)$.
\end{proof}

\subsection{Totally disconnected groups with totally disconnected duals}
A general totally disconnected group $G$ with totally disconnected dual is a limit of finite groups $G_{\alpha,\beta}$, and the situation for finite groups is also known, thanks to \cite{CDKSY} and \cite{BJ}.
The next result follows from Theorem \ref{thm:BL-approx} and \cite[Theorems 4.1 and 4.2]{BJ} in the same way that Lemmas \ref{lem:discrete-torsion-free-case} and \ref{lem:compact-connected-case} do.

\begin{lem}\label{lem:tdG-with-TDGhat}
Suppose that $G$ is a totally disconnected group with totally disconnected dual and $(G, \bss, \bsp)$ is a nondegenerate Brascamp--Lieb datum.
Then
\[
\BL(G, \bss, \bsp) 
= \BLhat(G, \bss, \bspp) 
= \sup_{H \in \calO(G)} \frac{\norm{1_H}_1} {\prod_{j} \norm{1_{\sigma_j(H)}}_{p_j}}
= \sup_{H \in \calO(G^\perp)} \frac{\norm{1_H}_1} {\prod_{j} \norm{1_{\sigma_j(H)}}_{p_j'}}  .
\]
\end{lem}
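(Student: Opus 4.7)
The plan is to follow the template of Lemmas \ref{lem:discrete-torsion-free-case} and \ref{lem:compact-connected-case}: reduce to the approximating groups $G_{\alpha,\beta}$ of Section \ref{sec:background} by means of Theorem \ref{thm:BL-approx}, invoke the known characterisation of the Brascamp--Lieb constant on finite groups, and match the resulting supremum with the one over $\calO(G)$. The last equality will then follow by applying the first three to the Fourier dual datum.

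The first step is to observe that every approximating group $G_{\alpha,\beta}$ is \emph{finite}. Each such group is elementary, of the form $\R^a \times \T^b \times \Z^c \times F$. The assumption that $G$ is totally disconnected eliminates the connected factors $\R^a$ and $\T^b$, and the assumption that $\hat G$ is totally disconnected eliminates the $\Z^c$ factor, whose dual $\T^c$ is connected. The same reasoning, applied to each $G_j$ (which is totally disconnected with totally disconnected dual because $\sigma_j$ is open and surjective and $\hat\sigma_j$ embeds $\hat G_j$ as a closed subgroup of $\hat G$), shows that each image $G_{j,\alpha,\beta}$ is finite as well.

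Next, for a finite-group datum, \cite[Theorems 4.1 and 4.2]{BJ} supplies
\[
\BL(G_{\alpha,\beta}, \bss_{\alpha,\beta}, \bsp)
= \BLhat(G_{\alpha,\beta}, \bss_{\alpha,\beta}, \bspp)
= \sup_{H' \leq G_{\alpha,\beta}} \frac{\norm{1_{H'}}_1}{\prod_j \norm{1_{\sigma_{j,\alpha,\beta}(H')}}_{p_j}}.
\]
Combining this with Theorem \ref{thm:BL-approx} will give both $\BL(G,\bss,\bsp) = \BLhat(G,\bss,\bspp)$ and that their common value equals $\lim_{\alpha,\beta}$ of the above supremum. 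The central task is then to match this iterated supremum with $\sup_{H \in \calO(G)}$. Every subgroup $H' \leq G_\alpha/K_\beta$ pulls back to a subgroup $H$ of $G_\alpha$ containing $K_\beta$ and consisting of finitely many $K_\beta$-cosets, so $H$ is compact and open in $G$; conversely, any $H \in \calO(G)$ is compact and hence lies in some $G_\alpha$, and is open and hence eventually contains $K_\beta$, so descends to a subgroup of $G_{\alpha,\beta}$. With the standard open and compact-quotient Haar normalisations of Section \ref{sec:background}, Weil's formula gives $\norm{1_{H'}}_{L^1(G_{\alpha,\beta})} = \norm{1_H}_{L^1(G)}$ and likewise for each $\sigma_j$, so the ratios coincide termwise, and passing to the limit produces $\sup_{H \in \calO(G)}$.

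For the final equality, I would invoke Fourier duality. Since any nondegenerate datum is equivalent to a canonical one, we may assume canonicity, and then \eqref{switch} gives $\BLhat(G,\bss,\bspp) = \BL(G^\perp,\bshs,\bspp)$. The group $G^\perp$ is a closed subgroup of the totally disconnected group $\hat G_1 \times \cdots \times \hat G_J$ and so is totally disconnected, and its dual $\bsG/G$ is a quotient of the totally disconnected group $\bsG$ and is therefore also totally disconnected. Applying the already-established first three equalities to the datum $(G^\perp, \bshs, \bspp)$ yields the supremum over $\calO(G^\perp)$. The step I expect to be the main obstacle is the Haar-measure bookkeeping when passing between $G$ and the approximating quotients $G_{\alpha,\beta}$; once the standard normalisations are checked to be compatible with the operation $H \mapsto H/K_\beta$, the matching of ratios is automatic, but the matching itself is what makes the identity between the limit of finite suprema and the single supremum over $\calO(G)$ clean rather than merely asymptotic.
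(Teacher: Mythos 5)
Your proposal is correct and follows exactly the route the paper intends: the paper's own ``proof'' simply states that the lemma follows from Theorem \ref{thm:BL-approx} and \cite[Theorems 4.1 and 4.2]{BJ} in the same way as Lemmas \ref{lem:discrete-torsion-free-case} and \ref{lem:compact-connected-case}, explicitly leaving the verification to the reader and flagging only the Haar-measure normalisation point. You have supplied precisely the omitted details --- finiteness of the $G_{\alpha,\beta}$, the matching of the iterated supremum with $\sup_{H\in\calO(G)}$ under the standard normalisations, and the duality step via \eqref{switch} --- so nothing further is needed.
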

We leave the verification of this to the reader.
We point out explicitly that in the approximation process, the Haar measure of $G$ determines that on the approximating finite groups $G_{\alpha,\beta}$, by the requirement that the subgroups and quotients have the standard open subgroup and compact quotient measures.

\subsection{The rank and dual rank conditions for a general LCA group}
We now  start putting everything together to deal with general LCA groups.

Every compactly generated LCA group $G$ has a \emph{growth index} $\gamma(G)$, such that $|U^n| \lesssim_U n^{\gamma(G)}$ for all positive integers $n$ and all relatively compact subsets of $G$, and $|U^n| \gtrsim_U n^{\gamma(G)}$ for some such $U$.  
A general compactly generated LCA group $G$ is of the form $\R^a \times \T^b \times K$, where $K$ is a compact subgroup, and the growth index $\gamma(G)$ of $G$ is then $a+c$.

We call a set $U$ of the form $[-r,r]^a \times [-r,r]^c \times K$ (for some fixed but large $r$) a product interval set.
If $U$ is a product interval set, then it is easy to check that
\[
\norm{1_{U^n}}_{L^p(G)} \eqsim_U \norm{\hat 1_{U^n}}_{L^{p'}(G)} \eqsim_U n^{\gamma(G)/p} .
\]

Given a nondegenerate datum $(G, \bss, \bsp)$ and a compactly generated closed subgroup $H$ of $G$, take a relatively compact open subset $U$ of $G$ and a relatively compact open subset $V$ of $H$ that generates $H$ and satisfies $|V^n| \eqsim_V n^{\gamma(H)}$; then the sets $\sigma_j(U)$ and $\sigma_j(V)$ are relatively compact in $\sigma_j(H)\subseteq G_j$.
We take product interval sets $W_j$ in each $G_j$ such that $\sigma_j(U) \subseteq W_j$ and $\sigma_j(V) \subseteq W_j$ (this amounts to a choice of the parameter $r$),  and then
\[
n^{\gamma(H)}
\lesssim \int_{G} 1_{UV^n}(x) \,dx
\leq \int_{G} \prod_{j}1_{\sigma_j(U)\sigma_j(V)^n}(x) \,dx
\leq \int_{G} \prod_{j}1_{\sigma_j(W)^{n+1}}(x) \,dx ,
\]
while
\[
\prod_j \norm{1_{W^{n+1}}}_{p_j}
= \prod_j | W^{n+1} |^{1/p_j}
\lesssim_U \prod_j (n+1)^{\gamma(\sigma(H)) /p_j}
\lesssim n^{\sum_j \gamma\sigma_j(H) )/p_j} ,
\]
and a similar estimate holds when we consider $\prod_j \norm{\hat 1_{W^n}}_{p'_j}$.

It follows that a necessary condition for a Brascamp--Lieb inequality or a dual Brascamp--Lieb inequality to hold is the rank condition:
\begin{equation}\label{eq:rank-inequality}
\gamma(H) \leq \sum_j \gamma(\sigma_j(H))/p_j
\end{equation}
for all compactly generated closed subgroups $H$ of $G$.
A duality argument shows that another necessary condition for a Brascamp--Lieb inequality or a dual Brascamp--Lieb inequality to hold is the dual rank condition:
\begin{equation}\label{eq:dual-rank-inequality}
\gamma(\hat{H}) \leq \sum_j \gamma(\sigma_j(\hat{H}))/p'_j
\end{equation}
for all compactly generated closed subgroups $\hat{H}$ of $\hat{G}$.

As remarked earlier, the rank condition is sufficient for torsion-free discrete groups, and the dual condition is sufficient for compact connected groups.
For vector spaces, which are self-dual, condition \eqref{eq:rank-inequality} and its dual version imply the homogeneity condition of BCCT.

\section{The general case}\label{sec:gen}

In this section, we consider the general case.
Recall that, for a general LCA group $G$, we write $G^{1} := G_{b} \cap G_{c}$; $G^{2} := G_{c}/(G_{b} \cap G_{c})$; $G^{3} := (G_{b} G_{c}) /G_{c}$; and $G^{4} := G/(G_{b} G_{c})$.
If $\sigma_j: G \to G_j$ is a homomorphism of LCA groups, then by a combination of restriction and factoring, $\sigma_j$ induces homomorphisms $\sigma^{1}_j: G^{1} \to G^{1}_j$, $\sigma^{2}_j: G^{2} \to G^{2}_j$, $\sigma^{3}_j: G^{3} \to G^{3}_j$, and $\sigma^{4}_j: G^{4} \to G^{4}_j$.
The vector homomorphism $\bss$ is then proper if and only if all the vector homomorphisms $\bss^1$, $\bss^2$, $\bss^3$ and $\bss^4$ are proper, by Proposition \ref{prop:new-homos-from-old}.
We normalise the Haar measures on $G^1$ and on $G^1_j$ to have total mass $1$ and the Haar measures on $G^4$ and on $G^4_j$ to be counting measures.

We start with $G^1$ and $G^4$.

\begin{lem}\label{lem:Gamma}
Let $(G, \bss, \bsp)$ be a nondegenerate Brascamp--Lieb datum, such that $\BL(G, \bss, \bsp)$ or $\BLhat(G, \bss, \bspp)$ is finite.
Then  
\[
\BL(G^4, \bssf, \bsp) = \BLhat(G^4, \bssf, \bspp) = 1.
\] 
Likewise,
\[
\BL(G^1, \bsso, \bsp) = \BLhat(G^1, \bsso, \bspp) = 1.
\] 
\end{lem}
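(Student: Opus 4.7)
My plan is to reduce both statements to rank-condition verifications, and then establish these from the rank and dual rank conditions on $(G, \bss, \bsp)$ that are available by hypothesis.

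By the necessity results of Section \ref{sec:rank-cond}, finiteness of $\BL(G, \bss, \bsp)$ or $\BLhat(G, \bss, \bspp)$ implies both the rank condition \eqref{eq:rank-inequality} and the dual rank condition \eqref{eq:dual-rank-inequality} for $(G, \bss, \bsp)$. Since $G^4$ is discrete torsion-free, Lemma \ref{lem:discrete-torsion-free-case} tells us that $\BL(G^4, \bssf, \bsp) = \BLhat(G^4, \bssf, \bspp) \in \{1,\infty\}$ with value $1$ precisely when the rank condition for $(G^4, \bssf, \bsp)$ holds; similarly, since $G^1$ is compact connected, Lemma \ref{lem:compact-connected-case} reduces the two constants for $G^1$ to the dual rank condition for $(G^1, \bsso, \bsp)$. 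Under the identifications $\widehat{G^1} = (\hat G)^4$ and $\widehat{G_j^1} = (\hat G_j)^4$, the dual rank condition on $(G^1, \bsso, \bsp)$ is exactly the rank condition on the datum $((\hat G)^4, \widehat{\bsso}, \bspp)$; since the dual rank condition on $(G, \bss, \bsp)$ is the rank condition on the canonical dual $(\hat G, \bshs, \bspp)$, the $G^1$ statement will follow from the $G^4$ argument applied to the dual datum. It therefore suffices to prove that the rank condition on $(G, \bss, \bsp)$ yields the rank condition on $(G^4, \bssf, \bsp)$.

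For this, let $H^4 \leq G^4$ be finitely generated, necessarily isomorphic to $\Z^k$ for some $k$. I would lift free generators $\dot x_1, \dots, \dot x_k$ of $H^4$ to elements $x_1, \dots, x_k \in G$ and set $H = \langle x_1, \dots, x_k \rangle$. The projection $\phi \colon G \to G^4$ is injective on $H$, because any non-trivial product $x_1^{n_1} \cdots x_k^{n_k}$ maps to a non-zero element of $\Z^k$, so $H \cap G_b G_c = \{e\}$. Since $G_b G_c$ is open in $G$, every sufficiently small open neighbourhood of $e$ lies inside $G_b G_c$ and meets $H$ only at $e$; hence $H$ is discrete in $G$, therefore closed, and isomorphic to $\Z^k$ with $\gamma(H) = k$. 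The rank condition on $(G, \bss, \bsp)$ applied to this $H$ yields $k = \gamma(H) \leq \sum_j \gamma(\sigma_j(H))/p_j$, and the remaining task is to arrange that $\gamma(\sigma_j(H)) = \gamma(\sigma_j^4(H^4))$ for every $j$.

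To achieve this equality, I would adjust each lift $x_i$ by an element of $G_b G_c$ so as to cancel any excess vector or extra discrete growth that the closure of $\sigma_j(H)$ in $G_j$ would otherwise carry beyond the purely discrete growth already recorded by $\sigma_j^4(H^4)$. Passing through the elementary approximation of Theorem \ref{thm:BL-approx} to the case $G_{\alpha,\beta} = \R^a \times \T^b \times \Z^c \times F$, the adjustment becomes a system of linear equations on the $\R^a$-coordinates of the lift, determined by the vector components of the $\sigma_j$; nondegeneracy of $\bss$, in the form of joint openness and surjectivity after the canonical embedding, supplies the algebraic freedom needed to solve these equations simultaneously for all $j$. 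Once such an adjusted $H$ is produced, the rank condition on $(G, \bss, \bsp)$ yields $\gamma(H^4) \leq \sum_j \gamma(\sigma_j^4(H^4))/p_j$, which is the rank condition on $(G^4, \bssf, \bsp)$.

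The principal obstacle is precisely this adjustment step: the naive discrete lift typically introduces excess vector-valued growth in $\sigma_j(H)$, and cancelling it simultaneously for every $j$ is a non-trivial algebraic task that rests essentially on the nondegeneracy of $\bss$ and on the passage to the elementary setting where the relevant equations can be written down and solved explicitly.
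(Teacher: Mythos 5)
Your architecture is the same as the paper's: reduce both assertions to rank conditions via Lemmas \ref{lem:discrete-torsion-free-case} and \ref{lem:compact-connected-case}, treat $G^1$ by dualising, and derive the rank condition for $(G^4,\bssf,\bsp)$ by lifting a finitely generated subgroup $\dot\Gamma\leq G^4$ to a discrete subgroup $\Gamma\leq G$ and applying the rank condition for $(G,\bss,\bsp)$ to $\Gamma$. You have also correctly isolated the delicate point that the paper's own proof passes over in its ``whence'' step: for a given lift one only has $\gamma(\sigma_j^4(\dot\Gamma))\leq\gamma(\sigma_j(\Gamma))$, possibly strictly, so the inequality obtained from $\Gamma$ is weaker than the one required.

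The gap is that your proposed repair cannot be carried out, because the statement you reduce to --- that the rank condition for $(G,\bss,\bsp)$ alone implies the rank condition for $(G^4,\bssf,\bsp)$ --- is false. Take $G=\R\times\Z$, $\sigma_1(t,n)=t-n$ and $\sigma_2(t,n)=t-\sqrt2\,n$ (both onto $\R$), $\sigma_3(t,n)=n$ (onto $\Z$), and $\bsp=(1,1,2)$. No nonzero element of $G$ is annihilated by both $\sigma_1$ and $\sigma_2$, and checking the closed compactly generated subgroups $H$ of $G$ (namely $\R\times\{0\}$, $a\Z\times\{0\}$, discrete subgroups of rank one or two, and $\R\times m\Z$) one finds $\gamma(\sigma_1(H))+\gamma(\sigma_2(H))\geq\gamma(H)$ in every case, so the rank condition for $(G,\bss,\bsp)$ holds with $p_1=p_2=1$ irrespective of $p_3$. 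Yet $G^4=\Z$ while $G_1^4=G_2^4=\{0\}$, so the rank condition for $(G^4,\bssf,\bsp)$ reads $1\leq 1/p_3$ and fails for $p_3=2$. (The Lemma itself is not contradicted: here $\BL(G,\bss,\bsp)=\infty$, as the dual rank condition detects, since $G^\perp\cong\R$ has dense, hence growth-zero, image in $\hat G_3=\T$ while $p_1'=p_2'=\infty$.) In particular no adjustment of the lift can succeed in this example: any lift of the generator of $G^4$ has the form $(t_0,1)$, and at least one of $\sigma_1(\Gamma)=(t_0-1)\Z$, $\sigma_2(\Gamma)=(t_0-\sqrt2)\Z$ is an infinite cyclic subgroup of $\R$, contributing growth $1$ that is invisible in $G_j^4$. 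So the system of linear equations you describe is genuinely overdetermined, and the conclusion must instead be extracted from the finiteness hypothesis by some additional input beyond the rank condition for $(G,\bss,\bsp)$ --- for instance by using the rank and dual rank conditions in tandem, or by testing \eqref{usual} directly on functions adapted to $\sigma_j^4(\dot\Gamma)$ and thickened in the $(G_j)_b(G_j)_c$ directions. Your proposal, as it stands, does not supply such an argument.
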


\begin{proof}
Let $\pi$ be the canonical projection from $G$ to the group $G^4$, which is discrete and torsion free, and is equipped with the counting measure.
Every compactly generated subgroup $\dot\Gamma$ of $G^4$ is finitely generated: let $\dot{E}$ be a minimal generating set for $\dot\Gamma$ in $G^4$, and choose any subset $E$ of $G$ that is mapped by $\pi$ bijectively to $\dot{E}$.
Then $\Gamma$, the group generated by $E$, which we also equip with counting measure, is a discrete subgroup of $G$, and $\pi:\Gamma \to \dot\Gamma$ is an isomorphism.  

Since $\BL(G, \bss, \bsp)<\infty$ or $\BLhat(G, \bss, \bspp)<\infty$, the arguments of Section \ref{sec:rank-cond} imply that $\gamma(\Gamma) \leq \sum_j \gamma(\sigma_j(\Gamma))/p_j$, whence
\[
\gamma(\dot\Gamma) \leq \sum_j \gamma(\sigma_j(\dot\Gamma))/p_j,
\]
and since $\dot\Gamma$ is an arbitrary compactly generated subgroup of $G^4$, the rank condition \eqref{eq:rank-inequality} holds for $G^4$.
By Lemma \ref{lem:discrete-torsion-free-case}, $\BL(G^4, \bssf, \bsp) = \BLhat(G^4, \bssf, \bspp) = 1$.

An analogous argument involving the dual group of $G^1$, and using Lemma \ref{lem:compact-connected-case}, completes the proof.
\end{proof}

\begin{proof}[Proof of Main Theorem]
If the product
\[
\BL(G^1, \bss^1, \bsp)  \BL(G^2, \bss^2, \bsp) \BL(G^3, \bss^3, \bsp) \BL(G^4, \bss^4, \bsp)
\] 
is finite, so is $\BL(G, \bss, \bsp)$, by a repeated application of Theorem \ref{thm:BL-G-subgps-quotients}, and then 
Lemma \ref{lem:Gamma} implies that $\BL(G^4, \bssf, \bsp) = \BL(G^1, \bsso, \bsp) = 1$.  
Two applications of Theorem \ref{thm:BL-G-subgps-quotients} now show that 
\[
\BL(G, \bss, \bsp) = \BL(G_{b}G_{c}/(G_{b} \cap G_{c}), \bsds,\bsp),
\]
where $\dot\sigma_j$ is the induced homomorphism from $G_{b}G_{c}/(G_{b} \cap G_{c})$.

As noted in Section 2, $G_{b}G_{c}/(G_{b} \cap G_{c})$ is the direct product of a vector group $G_{c}/G_{b} \cap G_{c}$ and a totally disconnected group of compact elements $G_{b}/G_{b} \cap G_{c}$, and any homomorphism of such a group into another group of the same type must send the connected component into the connected component of the image group and bounded elements into bounded elements, and so preserve the direct product structure.
By Corollary \ref{cor:if-factor},
\begin{align*}
\BL(G_{b}G_{c}/(G_{b} \cap G_{c}), \bsds,\bsp) 
&= \BL(G_{c}/(G_{b} \cap G_{c}), \bsds,\bsp) \BL(G_{b}/(G_{b} \cap G_{c}), \bsds,\bsp) \\
&= \BL(G^2, \bsstw,\bsp) \BL(G^3, \bssth,\bsp).
\end{align*}

To conclude the proof, if 
\[
\BL(G^1, \bss^1, \bsp)  \BL(G^2, \bss^2, \bsp) \BL(G^3, \bss^3, \bsp) \BL(G^4, \bss^4, \bsp)
\] 
is infinite, then at least one of the factors is infinite.
If, say, $\BL(G^4, \bss^4, \bsp)$ is infinite, then there is a finite generated subgroup $\dot\Gamma$ of $G^4$ for which the rank condition fails; hence there is a corresponding compactly generated discrete subgroup $\Gamma$ of $G$ for which the rank condition fails; finally, $G$ fails the rank condition, and $\BL(G, \bss, \bsp)$ is infinite.
A similar argument shows that $\BL(G, \bss, \bsp)$ is infinite if $\BL(G^1, \bss^1, \bsp)$ is infinite, and with a little more effort, we may  show that $\BL(G, \bss, \bsp)$ is infinite if $\BL(G^2, \bss^2, \bsp)$ or $\BL(G^3, \bss^3, \bsp)$ is infinite.
\end{proof}

\section{Remarks}\label{sec:rem}
In this section, we point out various reductions that are available to someone who might wish to compute a Brascamp--Lieb constant in the generality of LCA groups.

\subsection{When $p_j$ is $\infty$ or $1$}
We shall show that when one or more of the $p_j$ is equal to $\infty$ or $1$, then it suffices to consider ``smaller groups''.

\begin{thm}
Suppose that $1 \leq j \leq J$, and denote by $\tilde{\bss}$ and $\tilde{\bsp}$ the collection of homomorphisms $\sigma_1, \dots,\sigma_{k-1}, \sigma_{k+1}, \dots, \sigma_J$ and the collection of indices $p_1, \dots, p_{k-1},p_{k+1}, \dots, p_J$ with $\sigma_k$ omitted  and with $p_k$ omitted.
If $p_k = \infty$, then
\[
\BL(G, \bss, \bsp)
= \BL(G, \tilde{\bss}, \tilde{\bsp})
\qquad\text{and}\qquad
\BLhat(G, \bss, \bspp)
= \BLhat(G, \tilde{\bss}, \tilde{\bsp}').
\]
\end{thm}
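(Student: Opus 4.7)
The plan is to prove each equality by establishing both inequalities, exploiting the fact that when $p_k = \infty$ the ``constant function'' $f_k \equiv 1$ on $G_k$ plays the role of a unit-cost input: it has $\|f_k\|_\infty = 1$ in the primal sense, and (via Fourier inversion) $\|\hat{f}_k\|_1 = 1$ interpreted as the Dirac mass at the identity character of $\hat G_k$ in the dual sense.

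For the $\BL$ equality, the lower bound $\BL(G,\bss,\bsp) \geq \BL(G,\tilde{\bss},\tilde{\bsp})$ I would handle by approximation: given nonnegative $f_j \in S(G_j)^+$ for $j \neq k$, take a sequence $\phi_n \in S(G_k)^+$ with $0 \leq \phi_n \leq 1$ and $\phi_n \nearrow 1$ pointwise, apply the full $\BL$ inequality to $(\phi_n,(f_j)_{j \neq k})$, and pass to the limit by monotone convergence. The reverse inequality follows from the pointwise bound $|f_k(\sigma_k(x))| \leq \|f_k\|_\infty$, after which the reduced $\BL$ inequality applied to $(|f_j|)_{j \neq k}$ finishes the job (using the standard fact that the $\BL$ constant agrees on signed and nonnegative inputs, via approximation of $|f_j|$ by elements of $S(G_j)^+$).

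For the $\BLhat$ equality, the lower bound $\BLhat(G,\bss,\bspp) \geq \BLhat(G,\tilde{\bss},\tilde{\bsp}')$ is analogous: choose $\phi_n \in S(G_k)$ with $\hat{\phi}_n \in S(\hat G_k)^+$ symmetric, of unit $L^1$-mass, and concentrated near the identity of $\hat G_k$. Then $\|\phi_n\|_\infty \leq \|\hat{\phi}_n\|_1 = 1$, $\phi_n \to 1$ uniformly on compact sets of $G_k$, and dominated convergence (arranged by initial compact support of the remaining inputs) gives the bound.

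The hardest step will be the upper bound $\BLhat(G,\bss,\bspp) \leq \BLhat(G,\tilde{\bss},\tilde{\bsp}')$. My plan is to use Fourier inversion and Fubini to write
\[
\int_G \prod_j f_j(\sigma_j(x)) \wrt{x} = \int_{\hat G_k} \hat{f}_k(\xi) I(\xi) \wrt{\xi}, \qquad I(\xi) := \int_G \xi(\sigma_k(x)) \prod_{j \neq k} f_j(\sigma_j(x)) \wrt{x},
\]
so that it suffices to bound $\sup_\xi |I(\xi)| \leq \BLhat(G,\tilde{\bss},\tilde{\bsp}') \prod_{j \neq k} \|\hat{f}_j\|_{p'_j}$ uniformly in $\xi \in \hat G_k$. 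For each $\xi$, the character $\xi \circ \sigma_k \in \hat G$ lies in the image of the dual map $\sigma_k^\ast : \hat G_k \to \hat G$; under nondegeneracy (with the degenerate subcases, where $\bigcap_{j \neq k} \ker(\sigma_j)$ is noncompact or not contained in $\ker(\sigma_k)$, handled by Theorem \ref{thm:BL-cpct-qtnt} and Lemma \ref{lem:bss-must-be-proper}), this image lies in the closure of $\sum_{j \neq k} \sigma_j^\ast(\hat G_j)$. One may therefore approximate $\xi \circ \sigma_k$ by products $\prod_{j \neq k} \chi_j \circ \sigma_j$ with $\chi_j \in \hat G_j$, and modulating $f_j$ by $\chi_j$ converts $I(\xi)$ into a reduced $\BLhat$ integral whose inputs have the same $L^{p'_j}$ Fourier norms (by translation invariance of Haar measure on $\hat G_j$). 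The delicate point will be executing the limit in this possibly only dense character decomposition while retaining a bound uniform in $\xi$.
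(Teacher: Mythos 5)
Your four-way decomposition is sound, and for three of the four bounds it coincides with the paper's (very terse) argument: the paper simply asserts that the inequalities $\BL(G,\tilde{\bss},\tilde{\bsp}) \leq \BL(G,\bss,\bsp)$ and $\BLhat(G,\tilde{\bss},\tilde{\bsp}')\leq\BLhat(G,\bss,\bspp)$ are easy, and that equality relies on sequences $(u_n)$ in $S(G_k)$ converging to $1$ on large compact sets with $\norm{u_n}_{L^\infty(G_k)}\leq 1$ and $\norm{\hat u_n}_{L^1(\hat G_k)}\leq 1$ --- these are exactly your $\phi_n$, used exactly as you use them, together with the pointwise bound $|f_k|\leq\norm{f_k}_{\infty}$ for the primal upper bound. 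Where you genuinely add content is the fourth bound, $\BLhat(G,\bss,\bspp)\leq\BLhat(G,\tilde{\bss},\tilde{\bsp}')$, which the paper does not spell out; your Fourier-inversion and modulation argument is the right one, and the ``delicate point'' you flag closes more cleanly than you fear. Writing $F:=\prod_{j\neq k}f_j\circ\sigma_j\in L^1(G)$, your $I(\xi)$ is (up to a conjugation) the Fourier transform of $F$ evaluated at the character $\xi\circ\sigma_k$, hence $I$ extends to a continuous function on all of $\hat G$; the bound $\BLhat(G,\tilde{\bss},\tilde{\bsp}')\prod_{j\neq k}\norm{\hat f_j}_{p_j'}$ holds with a constant independent of the character on the subgroup $\left\{\prod_{j\neq k}\chi_j\circ\sigma_j : \chi_j\in\hat G_j\right\}$ by modulation invariance, so it passes to the closure of that subgroup by continuity alone. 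No uniformity issue arises.

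One correction is needed in your reduction, however. The closure of that subgroup of characters is $\Ann(N,\hat G)$ where $N:=\bigcap_{j\neq k}\ker(\sigma_j)$, and $\sigma_k^*(\hat G_k)$ is contained in it only when $N$ is trivial: nondegeneracy gives $N\cap\ker(\sigma_k)=\{e\}$, so $N\subseteq\ker(\sigma_k)$ forces $N=\{e\}$. Thus when $N$ is compact and nontrivial you are not in a situation that Theorem \ref{thm:BL-cpct-qtnt} quotients away (the full datum may be perfectly nondegenerate); instead, observe that $F$ is $N$-invariant, so by Weil's formula $I(\xi)=0$ for every $\xi$ with $\xi\circ\sigma_k$ nontrivial on $N$, and the density argument applies to the remaining $\xi$. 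When $N$ is noncompact the reduced vector homomorphism is not proper, $\BLhat(G,\tilde{\bss},\tilde{\bsp}')=\infty$ by Lemma \ref{lem:bss-must-be-proper}, and there is nothing to prove. With that adjustment your proof is complete, and it is considerably more explicit than the one in the paper.
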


\begin{proof}
The inequalities 
\[
\BL(G, \tilde{\bss}, \tilde{\bsp}) \leq \BL(G, \bss, \bsp)
\qquad\text{and}\qquad
\BLhat(G, \tilde{\bss}, \tilde{\bsp}') \leq \BLhat(G, \bss, \bspp)
\] 
are easy.  
Equality relies on the existence of sequences $(u_n)$ in $S(G_k)$ that converge to $1$ on arbitrarily large compact sets and satisfy $\norm{u_n}_{L^\infty(G_k)} \leq 1$ and $\norm{\hat u_n}_{L^1(\hat{G}_k)} \leq 1$.
\end{proof}

This justifies omission of the case where some $p_j$ are infinite.

\begin{thm}
Suppose that $1 \leq k \leq J$ and $p_k = 1$. 
Denote $\ker(\sigma_k)$ by $N$, denote by $\bsr$ the collection of restricted homomorphisms $\sigma_1|_{N}, \dots,\sigma_{k-1}|_{N}, \sigma_{k+1}|_{N}, \dots, \sigma_J|_{N}$ with $\sigma_k$ omitted, and denote by $\bsq$ the collection of indices $p_1, \dots, p_{k-1},p_{k+1}, \dots, p_J$  with $p_k$ omitted.
Then
\[
\BL(G, \bss, \bsp)
= \BL(N, \bsr, \bsq).
\]
\end{thm}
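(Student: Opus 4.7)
The plan is to use Weil's integration formula \eqref{eq:Weils-formula} on $G$ relative to the compact-or-noncompact closed subgroup $N = \ker(\sigma_k)$. Nondegeneracy of $(G,\bss,\bsp)$ ensures that $\sigma_k$ descends to a topological isomorphism $\dot\sigma_k: G/N \to G_k$, and we choose Haar measures on $N$ and $G/N$ so that Weil's formula holds on $G$ and so that $\dot\sigma_k$ carries Haar measure to Haar measure on $G_k$. For $y \in N$ and any $x \in G$, one has $\sigma_k(xy) = \sigma_k(x)$, so after applying Weil,
\[
\int_G \prod_j f_j(\sigma_j(x)) \wrt x
= \int_{G/N} f_k(\dot\sigma_k(\dot x))\, F_x\fn d\dot x,
\qquad
F_x := \int_N \prod_{j\neq k} f_j(\sigma_j(x)\sigma_j(y))\wrt y.
\]

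For the upper bound, the substitutions $z\mapsto f_j(\sigma_j(x)z)$ (for $j\neq k$) are translates of the $f_j$ in $L^{p_j}(G_j)$ and so have the same norms; applying the Brascamp--Lieb inequality for $(N,\bsr,\bsq)$ with input functions $y\mapsto f_j(\sigma_j(x)\sigma_j(y))$ for each fixed $\dot x$ gives
\[
|F_x| \leq \BL(N,\bsr,\bsq)\prod_{j\neq k} \lnorm f_j\rnorm_{L^{p_j}(G_j)}.
\]
Integrating the absolute value over $\dot x \in G/N$ against $f_k\circ\dot\sigma_k$ and using that $\dot\sigma_k$ is a measure-preserving isomorphism onto $G_k$, the outer integral becomes $\lnorm f_k\rnorm_{L^1(G_k)}$, yielding $\BL(G,\bss,\bsp)\leq \BL(N,\bsr,\bsq)$.

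For the reverse inequality, fix $\varepsilon>0$ and choose $g_j\in S(G_j)$ for $j\neq k$ that are near-extremal for the BL inequality on $(N,\bsr,\bsq)$, so that
\[
\labs \int_N \prod_{j\neq k} g_j(\sigma_j(y))\wrt y \rabs
\geq (1-\varepsilon)\BL(N,\bsr,\bsq) \prod_{j\neq k} \lnorm g_j\rnorm_{L^{p_j}(G_j)}.
\]
Now take a net $(f_{k,\lambda})$ in $S(G_k)^+$ concentrated in shrinking neighbourhoods of the identity with $\lnorm f_{k,\lambda}\rnorm_{L^1(G_k)}=1$. Using Weil on $G$ as above with the inputs $(g_j)_{j\neq k}$ and $f_{k,\lambda}$, the outer integrand becomes $f_{k,\lambda}(\dot\sigma_k(\dot x))F_x$, where by continuity of the $g_j$ the function $\dot x \mapsto F_x$ is continuous at $\dot x=e$ with value the target $N$-integral (the integrability is uniform since the $g_j$ are Schwartz-type). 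Hence
\[
\int_G f_{k,\lambda}(\sigma_k(x))\prod_{j\neq k}g_j(\sigma_j(x))\wrt x
\;\longrightarrow\;
\int_N\prod_{j\neq k}g_j(\sigma_j(y))\wrt y
\]
as $\lambda$ shrinks, which combined with the BL inequality on $G$ applied to $(f_{k,\lambda},g_j)$ and sending $\varepsilon\to 0$ gives $\BL(N,\bsr,\bsq)\leq \BL(G,\bss,\bsp)$.

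The main obstacle will be the lower-bound approximation: I must justify the passage to the limit $\lambda$, which requires that $F_x$ be defined and continuous in a neighbourhood of $e$ in $G/N$, and I must set up the Haar normalisations so that $\dot\sigma_k$ is measure-preserving (this is possible since $(G,\bss,\bsp)$ is nondegenerate and $\dot\sigma_k$ is an isomorphism of LCA groups; the statement is implicitly read up to this compatible choice, as with Theorems \ref{thm:BL-cpct-qtnt} and \ref{thm:BL-open-sbgp}). Dominated convergence gives the limit once one notes that $|F_x|$ is bounded by $\BL(N,\bsr,\bsq)\prod_{j\neq k}\lnorm g_j\rnorm_{L^{p_j}(G_j)}$ (as in the upper-bound step), so $f_{k,\lambda}$ acts as an approximate identity integrated against a continuous, bounded function.
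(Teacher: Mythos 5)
Your proposal is correct and follows essentially the same route as the paper: both directions rest on Weil's formula relative to $N=\ker(\sigma_k)$, with the bound $\BL(G,\bss,\bsp)\leq\BL(N,\bsr,\bsq)$ obtained by applying the inequality on $N$ to the translated inputs and integrating against $\abs{f_k}$ in $L^1$, and the reverse bound obtained by testing with an approximate identity in $L^1(G_k)$ (the paper places the delta approximation at a general point $\sigma_k(z)$ and then specialises to $z=e$, but this is the same argument). The measure-normalisation and dominated-convergence points you flag are handled implicitly in the paper in the same way.
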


\begin{proof}
Suppose that $\BL(G, \bss, \bsp)$ is finite, and write $C$ for $\BL(G, \bss, \bsp)$.
Take $z \in G$, and let $(f^\alpha_k)_\alpha$ be an approximation to the delta distribution at $\sigma_k(z)$ in $G_k$.
Since $\bss$ is proper, $\bigcap_j \sigma_j^{-1}(\supp(f_j))$ is a compact subset of $G$.
Then Weyl's integral formula \eqref{eq:Weils-formula} shows that
\begin{equation}\label{eq:L1-reduction-1}
\int_{G} \prod_{j} f_j(\sigma_j(x)) \wrt{x}
= \int_{G/N} f_k(\dot x) \int_{N} \prod_{j\neq k} f_j(\sigma_j(xy)) \wrt{y} \wrt{\dot{x}}
\to \int_{N} \prod_{j\neq k} f_j(\sigma_j(z) \rho_j(y)) \wrt{y} ,
\end{equation}
whence
\begin{equation}\label{eq:L1-reduction-2}
\bigglabs \int_{N} \prod_{j\neq k} f_j(\sigma_j(z)\rho_j(y)) \wrt{y} \biggrabs
\leq C \prod_{j\neq k} \norm{ f_j }_{L^{p_j}(G_j)} 
\qquad\forall f_j \in S(G_j).
\end{equation}
This holds uniformly for $z \in G$, and in particular when $z = e$, and so 
\begin{equation}\label{eq:L1-reduction-3}
\bigglabs \int_{N} \prod_{j\neq k} f_j(\rho_j(y)) \wrt{y} \biggrabs
\leq C \prod_{j\neq k} \norm{ f_j }_{L^{p_j}(G_j)} 
\qquad\forall f_j \in S(G_j),
\end{equation}
which implies that $\BL(N, \bsr, \bsq) \leq C = \BL(G, \bss, \bsp)$.

Conversely, suppose that $\BL(N, \bsr, \bsq)$ is finite and write $C$ for $\BL(N, \bsr, \bsq)$.
Then \eqref{eq:L1-reduction-3} holds by definition, so \eqref{eq:L1-reduction-2} holds, by the translation-invariance of Haar measure, and then
\[
\begin{aligned}
\bigglabs\int_{G} \prod_{j} f_j(\sigma_j(x)) \wrt{x} \biggrabs
&\leq \int_{G/N} |f_k(\dot x)| \bigglabs \int_{N} \prod_{j\neq k} f_j(\sigma_j(xy)) \wrt{y} \biggrabs \wrt{\dot{x}} \\
&\leq C \norm{f_k}_{L^1(G_k)} \prod_{j\neq k} \norm{ f_j }_{L^{p_j}(G_j)},
\end{aligned}
\]
and so $\BL(G,\bss,\bsp) \leq C = \BL(N,\bsr,\bsq)$.
\end{proof}
If $\BL(G,\bss,\bsp)$ is finite, then so is $\BL(N,\bsr,\bsq)$, and hence $\bsr$ is proper.
This implies that $\rho_j(N)$ must be an open subgroup of $G_j$.

This result justifies omission of the case where some $p_j$ are equal to $1$.

\subsection{Transversality}
We say that $\bss$ is \emph{transversal} if $\bigcap_{j \neq k} \ker(\sigma_j) = \{e\}$ when $1 \leq k \leq J$.
This condition implies that, given  $x_1 \in G_1$, \dots, $x_{j-1} \in G_{j-1}$, $x_{j+1} \in G_{j+1}$,  \dots, and $x_n \in G_n$, there is exactly one $x_j \in G_j$ such that $(x_1, \dots, x_n) \in \bss(G)$, that is, that $\bss(G)$ may be viewed as a graph over  $G_1\times \dots \times G_{j-1} \times G_{j+1}\times  \dots \times G_n$.

The next result shows that we may assume transversality.

\begin{thm}\label{thm:BL-non-transverse}
Suppose that $(G, \bss, \bsp)$ is a Brascamp-Lieb datum, and that $N$ is a closed subgroup of $G$ such that $\sigma_k(N)$ is closed in $G_k$ and $\sigma_j(N) = \{e \}$ when $j \neq k$.
If $N$ is non\-compact and $p_k \neq 1$, then 
\[
\BL(G, \bss,\bsp) = \BLhat(G, \bss,\bsp) = \infty.
\]
Otherwise, define $\dot\sigma_k: G/N \to G_k/\sigma(N)$ by $\dot\sigma_k(xN) = \sigma_k(x) \sigma_k(N)$ and $\dot\sigma_j: G/N \to G_j$ by $\dot\sigma_j(xN) = \sigma_j(x)$ when $j \neq k$.
Then 
\[
\BL(G/N, \bsds,\bsp) = \BL(G, \bss,\bsp)
\quad\text{and}\quad
\BLhat(G/N, \bsds,\bsp) = \BLhat(G, \bss,\bsp) .
\]
\end{thm}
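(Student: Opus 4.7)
The plan is to argue separately on the two cases distinguished by the theorem. We may always assume that $\bss$ is proper, since otherwise Lemma \ref{lem:bss-must-be-proper} forces $\BL(G,\bss,\bsp)=\BLhat(G,\bss,\bspp)=\infty$, and one checks that $\bsds$ is then also not proper, so both sides of the second assertion are likewise infinite. Under properness of $\bss$, the subgroup $N':=N\cap\ker(\sigma_k)$ is contained in $\bigcap_j\ker(\sigma_j)$ which is compact, and by testing $\bss^{-1}$ against compact sets of the form $\{e\}^{J-1}\times K$ in $\bsG$, the restriction $\sigma_k|_N:N\to M:=\sigma_k(N)$ is proper with compact kernel $N'$, inducing a topological group isomorphism $N/N'\cong M$. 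In particular noncompactness of $N$ forces noncompactness of $M$, and we normalise Haar measures so that $|N'|=1$ and hence $\int_N g(\sigma_k(n))\wrt n=\int_M g(m)\wrt m$ whenever the latter makes sense.

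For the first assertion, I would follow the strategy of Lemma \ref{lem:bss-must-be-proper}. Fix a relatively compact open $V\subseteq G$ with $|V|>0$ and, using noncompactness of $M$, choose $n_1,n_2,\ldots\in N$ so that the sets $\sigma_k(V)\sigma_k(n_m)$ are pairwise disjoint in $G_k$ (which forces $n_m V$ to be pairwise disjoint in $G$). Take $f_j\in S(G_j)^+$ for $j\neq k$ with $f_j\geq 1$ on $\sigma_j(V)$ and controlled $L^{p_j}$-norm. For $\BL$, let $f_k^{(M)}\in S(G_k)^+$ satisfy $f_k^{(M)}\geq 1$ on $\bigcup_{m=1}^M\sigma_k(Vn_m)$ with support close to that union; using $\sigma_j(n_m)=e$ for $j\neq k$,
\[
\int_G\prod_j f_j(\sigma_j(x))\wrt x \geq M|V|,
\]
while disjointness gives $\|f_k^{(M)}\|_{p_k}^{p_k}\lesssim M|\sigma_k(V)|$, so $\BL\gtrsim M^{1-1/p_k}\to\infty$ because $p_k>1$. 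For $\BLhat$ I would instead take $f_k^{(M)}:=\sum_{m=1}^M\phi(\,\cdot\,\sigma_k(n_m)^{-1})$, a superposition of translates of a fixed compactly supported $\phi\in S(G_k)^+$ with $\phi\geq 1$ on $\sigma_k(V)$; its Fourier transform factors as $\hat\phi(\xi)\sum_m\overline{\xi(\sigma_k(n_m))}$. A translation-invariance computation still produces $\int\prod_j f_j\circ\sigma_j=MI_0$ with $I_0>0$ fixed, and choosing the $\sigma_k(n_m)$ so that $\sigma_k(n_m)\sigma_k(n_l)^{-1}$ avoids $\supp(\phi)\supp(\phi)^{-1}$ for $m\neq l$ makes the autocorrelation of $f_k^{(M)}$ concentrate on the diagonal, giving $\|\hat f_k^{(M)}\|_2=M^{1/2}\|\phi\|_2$ by Plancherel. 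Riesz--Thorin interpolation of this $L^2$ bound against the trivial bound $\|\hat f_k^{(M)}\|_{p_k'}\leq M\|\hat\phi\|_{p_k'}$ then yields $\|\hat f_k^{(M)}\|_{p_k'}\lesssim M^\alpha$ with $\alpha:=\max(1/p_k',1-1/p_k')<1$ when $1<p_k<\infty$, so $\BLhat\gtrsim M^{1-\alpha}\to\infty$. The endpoint $p_k=\infty$ requires a separate Cauchy--Schwarz estimate exploiting Riemann--Lebesgue decay of $\mathcal{F}^{-1}|\hat\phi|$ to obtain $\|\hat f_k^{(M)}\|_1\lesssim M^{1/2}$, which is the most delicate part of this step.

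For the second assertion, the key identity is Weil's formula combined with $\sigma_j(n)=e$ for $n\in N$ and $j\neq k$:
\[
\int_G\prod_j f_j(\sigma_j(x))\wrt x = \int_{G/N}\prod_{j\neq k}f_j(\dot\sigma_j(\dot x))\biggl(\int_M f_k(\sigma_k(x)m)\wrt m\biggr)\wrt{\dot x}.
\]
To prove $\BL(G,\bss,\bsp)\leq\BL(G/N,\bsds,\bsp)$, I would bound the inner integral in modulus: when $N$ is compact (so $M$ is compact with $|M|=1$), H\"older gives $\abs{\int_M f_k(ym)\wrt m}\leq g_k(yM)$ with $g_k(yM):=(\int_M|f_k(ym)|^{p_k}\wrt m)^{1/p_k}$; when $p_k=1$ one takes $h_k(yM):=\int_M|f_k(ym)|\wrt m$ directly. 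In each case $\|g_k\|_{L^{p_k}(G_k/M)}=\|f_k\|_{L^{p_k}(G_k)}$ by Weil's formula applied to $G_k\to G_k/M$, and the BL inequality on $G/N$ closes the estimate. For the converse inequality, I would lift a test function $\tilde f_k\in S(G_k/M)$ back to $G_k$: for compact $M$ the pullback along $G_k\to G_k/M$ suffices; for noncompact $M$ with $p_k=1$, set $f_k(y):=\tilde f_k(yM)g(y)$ where $g$ is the continuous nonnegative ``section'' on $G_k$ satisfying $\int_M g(ym)\wrt m=1$ recalled in Section \ref{sec:background}, so that $\|f_k\|_1=\|\tilde f_k\|_1$ and $\int_M f_k(ym)\wrt m=\tilde f_k(yM)$. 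Direct substitution into the displayed identity then produces $\int_G\prod_j f_j\circ\sigma_j=\int_{G/N}\prod_j\tilde f_j\circ\dot\sigma_j$, yielding the reverse inequality and hence equality. The assertions for $\BLhat$ follow from parallel arguments on the Fourier side; the main difficulty is arranging lifts that remain in the test-function space $S(G_k)$ while supplying the correct $\|\hat f_k\|_{L^{p_k'}(\hat G_k)}$ norm, where Feichtinger's algebra plays a convenient role.
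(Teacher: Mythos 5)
Your proof of the second assertion is essentially the paper's: both arguments rest on the Weil-formula identity
\[
\int_G\prod_jf_j(\sigma_j(x))\wrt{x}=\int_{G/N}\Bigl(\prod_{j\neq k}f_j(\dot\sigma_j(\dot x))\Bigr)\dot f_k(\dot\sigma_k(\dot x))\wrt{\dot x},\qquad \dot f_k(\dot y)=\int_{\sigma_k(N)}f_k(yz)\wrt{z},
\]
combined with the norm comparison $\bignorm{\dot f_k}_{L^{p_k}(G_k/\sigma_k(N))}\leq\norm{f_k}_{L^{p_k}(G_k)}$ (Jensen when $N$ is compact, the triangle inequality when $p_k=1$) and the identical lift $f_k(y)=\dot f_k(y\sigma_k(N))g(y)$ with $\int_{\sigma_k(N)}g(yz)\wrt{z}=1$ to achieve equality. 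Where you genuinely diverge is the infiniteness assertion: the paper stays inside the same identity and simply ``arranges that $\dot f_k$ is infinite on a set of positive measure'' while $\norm{f_k}_{p_k}$ remains finite, asserting that the Fourier case is similar; you instead run a quantitative disjoint-translates construction in the spirit of Lemma \ref{lem:bss-must-be-proper}, getting a lower bound of order $M$ for the multilinear form against $\norm{f_k^{(M)}}_{p_k}\lesssim M^{1/p_k}$, and for the dual constant a Plancherel-plus-log-convexity bound $\bignorm{\hat f_k^{(M)}}_{p_k'}\lesssim M^{\max(1/p_k,1/p_k')}$. Your route buys two things the paper's one-liner does not: the extremising sequence consists of genuine test functions, so the integrability proviso of Definition \ref{def:BL-ineq-homo} is respected, and the $\BLhat$ blow-up is actually proved rather than asserted --- which matters, since making the fibre integral of $f_k$ diverge does not transparently interact with a hypothesis on $\norm{\hat f_k}_{p_k'}$. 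The cost is the $p_k=\infty$ endpoint you rightly flag, where log-convexity degenerates and one must choose the $\sigma_k(n_m)$ separated enough that the off-diagonal autocorrelation terms of $f_k^{(M)}$ are negligible; this is completable but only sketched. Two small cautions: the opening claim that non-properness of $\bss$ forces non-properness of $\bsds$ is not automatic --- by Proposition \ref{prop:new-homos-from-old}, properness of $\bss$ is equivalent to properness of $\sigma_k|_N$ \emph{together with} that of $\bsds$, so the failure could be absorbed entirely by $\sigma_k|_N$ (this affects only degenerate cases that the paper's own proof also ignores) --- and you use the symbol $M$ both for the subgroup $\sigma_k(N)$ and for the number of translates.
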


\begin{proof}
This follows from the equality 
\begin{equation}\label{eq:quot}
\int_{G} \prod_{j} f_j (\sigma_j(x)) \wrt{x} = \int_{G/N} \prod_{j} \dot{f}_j (\sigma_j(\dot{x})) \wrt{\dot{x}} ,
\end{equation}
where $\dot{f}_j = f_j$ when $j \neq k$ while 
\[
\dot{f}_k(\dot{y}) = \int_{\sigma_k(N)} f_k(yz) \wrt{z} = \int_{N} f_k(y\sigma(z)) \wrt{z}
\] 
for all $y \in G_k$. 
If $N$ is noncompact and $p_k > 1$, we can arrange that $\dot{f}_k$ is infinite on a set of positive measure, and then \eqref{eq:quot} in infinite while $\norm{f_k}_{L^{p_k}(G_k)}$ is finite; otherwise,
\[
\bignorm{\dot{f}_k }_{L^{p_k}(G_k/\sigma_k(N))} \leq \bignorm{ f_k }_{L^{p_k}(G_k)}, 
\]
with equality attained when $f_k$ is of the form $\dot{f}_k(y\sigma(N)) g(y)$, where $\int_{\sigma(N)} g(yz) \wrt{z} = 1$.
Similar inequalities hold for the Fourier transforms.
\end{proof}


\end{document}